\DeclareSymbolFont{rmlargesymbols}{OMX}{mdbch}{m}{n}
\DeclareMathSymbol{\rmintop}{\mathop}{rmlargesymbols}{82}
\DeclareMathSymbol{\rmointop}{\mathop}{rmlargesymbols}{72}
\DeclareMathSymbol{\rmsumop}{\mathop}{rmlargesymbols}{80}
\DeclareMathSymbol{\rmunionop}{\mathop}{rmlargesymbols}{83}
\DeclareMathSymbol{\rmintersectop}{\mathop}{rmlargesymbols}{84}
\DeclareMathSymbol{\rmtensorop}{\mathop}{rmlargesymbols}{79}
\DeclareMathSymbol{\rmdirectsumop}{\mathop}{rmlargesymbols}{77}
\theoremstyle{definition}
\newtheorem{thm}{Theorem}[section]
\newtheorem{lem}[thm]{Lemma}
\newtheorem{dfn}[thm]{Definition}
\newtheorem{rmk}[thm]{Remark}
\newtheorem{ntn}[thm]{Notation}
\newtheorem{std}[thm]{Standing Condition}
\newcommand{\QED}{\rule{0.4em}{2ex}}
\def\conj#1{\overline{#1}}
\def\inner#1#2#3{ \langle #1,\, #2 \rangle_{_{#3}}}
\def\Dinner#1#2{ \langle #1,\, #2 \rangle_{{}_D}}
\def\Dpinner#1#2{ \langle #1,\, #2 \rangle_{{}_{D^\perp}}}
\def\vartau{\uptau}
\def\ft#1{\widehat{#1}}
\def\T{\bold T}
\def\ccite#1{\textcolor{Red}{\cite{#1}}}
\numberwithin{equation}{section}
\begin{document}

\title[ K-T\lowercase{heory} \lowercase{of} A\lowercase{pproximately} C\lowercase{entral} P\lowercase{owers}-R\lowercase{ieffel} P\lowercase{rojections}, \ S. W\lowercase{alters} ]
{\Large \rm K-T\lowercase{heory} \lowercase{of} A\lowercase{pproximately} \\ 
\vskip5pt  C\lowercase{entral} P\lowercase{rojections} \lowercase{in} \lowercase{the} F\lowercase{lip} O\lowercase{rbifold}}

\author{S\lowercase{amuel} G.\,W\lowercase{alters} \\ 
{\Tiny U\lowercase{niversity of} N\lowercase{orthern} B\lowercase{ritish} 
C\lowercase{olumbia}}}
\date{Year of the Pandemic, 2020} 
\address{Department of Mathematics and Statistics, University  of Northern B.C., Prince George, B.C. V2N 4Z9, Canada.}
\email[]{\ samuel.walters@unbc.ca \  \text{or} \ walters@unbc.ca}
\subjclass[2000]{46L80, 46L40, 46L85}
\keywords{C*-algebras, irrational rotation algebra, noncommutative torus, K-theory, Connes Chern character, projections}
\urladdr{http://web.unbc.ca/~walters/}

\begin{abstract}
For an approximately central (AC) Powers-Rieffel projection $e$ in the irrational Flip orbifold C*-algebra $A_\theta^\Phi,$ where $\Phi$ is the Flip automorphism of the rotation C*-algebra $A_\theta,$ we compute the Connes-Chern character of the cutdown of any projection by $e$ in terms of K-theoretic invariants of these projections.  This result is then applied to computing a complete K-theoretic invariant for the projection $e$ with respect to central equivalence (within the orbifold). Thus, in addition to the canonical trace, there is a $4\times6$ K-matrix invariant $K(e)$ arising from unbounded traces of the cutdowns of a canonically constructed basis for $K_0(A_\theta^\Phi) = \mathbb Z^6$. Thanks to a theorem of Kishimoto, this enables us to tell when AC projections in $A_\theta^\Phi$ are Murray-von Neumann equivalent via an approximately central partial isometry (or unitary) in $A_\theta^\Phi$. As additional application, we obtain the K-matrix of canonical SL$(2,\mathbb Z)$-automorphisms of $e$ and show that there is a subsequence of $e$ such that $e, \sigma(e),  \kappa(e), \kappa^2(e), \sigma\kappa(e), \sigma\kappa^2(e)$ -- which are the orbit elements of $e$ under the symmetric group $S_3 \subset$ SL$(2,\mathbb Z)$ -- are pairwise centrally not equivalent, and that each SL$(2,\mathbb Z)$ image of $e$ is centrally equivalent to one of these, where $\sigma, \kappa$ are the Fourier and Cubic transform automorphisms of the rotation algebra.
\end{abstract}

\maketitle

\begin{multicols}{2}
  { \tiny{\tableofcontents}}
\end{multicols}

\newpage

\textcolor{blue}{\Large \section{Introduction}}

We study the problem of when a pair of approximately central (AC) projections are Murray-von Neumann equivalent by means of a  partial isometry (or unitary) that is approximately central. In doing so, additional K-theoretic information on the AC projections is required.
Such information was found by Kishimoto \ccite{AK}\footnote{Kishimoto's main Theorem 2.1 in \ccite{AK} is stated for separable, nuclear, purely infinite, simple C*-algebras satisfying UCT. In Remark 2.9 of \ccite{AK}, he notes that it also applies to simple AT C*-algebras of real rank zero, which includes the irrational rotation C*-algebras, known to be AT from \ccite{EE} and \ccite{EL}, and also includes their canonical orbifolds under the canonical automorphisms of order 2, 3, 4, and 6 as they are known to be AF from \ccite{BK} \ccite{ELPW} \ccite{SW-CMP} \ccite{SWcrelles}. Recall that AF-algebras are AT-algebras (\ccite{Rordam}, Corollary 3.2.17).} (Theorem 2.1) for certain classes of C*-algebras, which include algebras studied in this paper.  The objective of this paper is to formalize this information into a topological K-theory invariant for AC projections, and proceed with computing it explicitly for an AC Flip-invariant Powers-Rieffel projection in the irrational rotation C*-algebra $A_\theta,$ which will be denoted throughout this paper by $e$ (see equation \eqref{ezetaE}).  The projection $e$ is similar to one constructed by Elliott and Lin \ccite{EL}, and is essentially the same as the unit projection in the Elliott-Evans tower construction \ccite{EE}.

\medskip

Our main results are stated in Theorems \ref{MaintheoremA}, \ref{Kmatrixthm}, and \ref{S3imagesofe} of this section.
\medskip

In this paper we will be concerned with the Flip orbifold C*-algebra
\[
A_\theta^\Phi = \{x\in A_\theta: \Phi(x)=x\}
\]
the fixed point C*-subalgebra of the irrational rotation algebra $A_\theta$ under the Flip automorphism $\Phi$ defined by 
\[
\Phi(U) = U^{-1}, \qquad \Phi(V) = V^{-1}
\]
where $U,V$ are unitaries generating $A_\theta$ (also called noncommutative torus)  satisfying the usual Heisenberg commutation relation
\begin{equation}\label{VUUV}
VU= e^{2\pi i\theta} UV.
\end{equation}
Throughout the paper, $\theta$ is a fixed irrational number, $0 < \theta < 1$. Both $A_\theta$ and its Flip orbifold have canonical bounded traces which are unique normalized traces denote by $\tau$. 

\medskip

The approximately central projections studied in this paper depend on integer parameters. In our case for example, $e = e_{q',q,p,\theta}$ is a Powers-Rieffel projection that depends on a sequence of consecutive convergents $p/q,\ p'/q'$ of $\theta$. Since the rotation algebra $A_\theta$ is generated by the unitaries $U,V$, a projection $e$ is AC in $A_\theta$ if $\|e U - U e\|, \|e V - V e\| \to0$  as $q\to \infty$. A projection $e$ is AC in the Flip orbifold $A_\theta^\Phi$ if
\[
\|e(U+U^*) - (U+U^*) e\| \to0,\quad \|e (V+V^*) - (V+V^*)e\| \to0
\]
as $q\to \infty$ (since it is known from \ccite{BEEKa} that $U+U^*, V+V^*$ generate $A_\theta^\Phi$). We do not know if AC in $A_\theta^\Phi$ implies AC in $A_\theta$ in general (though for many projections in the C*-algebra generated by certain powers of $U,V$ this can be checked).
\bigskip

\begin{dfn} Two AC projections are {\it centrally equivalent} in an algebra $A$ (or {\it AC-equivalent} in $A$) if they are Murray-von Neumann equivalent by a partial isometry in $A$ that is approximately central in $A$ (for large enough parameter).
\end{dfn}
\medskip

Kishimoto's Theorem 2.1 in \ccite{AK} (restated in Section 2.4 below),  
as applied to the Flip orbifold $A_\theta^\Phi$ (which known to be an AF-algebra \ccite{BK}, \ccite{SW-CMP}), implies that two AC projections $e$ and $f$ in $A_\theta^\Phi$ are centrally equivalent in $A_\theta^\Phi$ if and only if the cutdown of a given finite generating set of projections $[P_j]$ for $K_0(A_\theta^\Phi)$ by $e$ and $f$ have the same $K_0$-class,
\begin{equation}\label{efPj}
[\chi(eP_je)] = [\chi(fP_jf)] \ \ \in \ K_0(A_\theta^\Phi)
\end{equation}
for each $j,$ where $\chi$ is the characteristic function of the interval $[\frac12,\infty)$. Of course, equation \eqref{efPj} is understood to hold for large enough integer parameters which $e$ and $f$ depend on.
Since $A_\theta^\Phi$ is AF (so its $K_1 = 0$), the $K_1$ side of Kishimoto's conditions (see Theorem 2.2 below) are trivially satisfied. In our particular case, $P_j$ are projections in $A_\theta^\Phi$.\footnote{It seems reasonable to expect that if $[\chi(ege)] < [\chi(fgf)]$ for each $g = P_j$, then there exist AC partial isometry $u$ such that $uu^* = e$ and $u^*u \le f$; but the author has no proof. }

\medskip

In Section 2.3 we construct a specific basis $[P_1], \dots, [P_6]$ for $K_0(A_\theta^\Phi) = \mathbb Z^6$ (see \eqref{basis} and \eqref{Kbasis}), with specific projections $P_s$ in $A_\theta^\Phi,$ with respect to which we compute the classes $[\chi(eP_se)]$ -- which would therefore determine the central equivalence class of the  projection $e$ in the Flip orbifold.  These $K_0$-classes will be identified explicitly by computing their Connes-Chern character  
\begin{equation} 
\T : K_0(A_\theta^\Phi) \to \mathbb R^5, \qquad
\T(x) = (\vartau(x); \phi_{00}(x), \phi_{01}(x), \phi_{10}(x), \phi_{11}(x))
\end{equation}
in terms of the canonical trace $\tau,$ and four basic unbounded traces $\phi_{jk}$ (defined in Section 2.1 below). The map $\T$ is known to be a group monomorphism for irrational $\theta$ (see \ccite{SWa}, Proposition 3.2). Therefore, in terms of the Connes-Chern character we will calculate the numerical invariants for $e$
\[
\tau \chi(eP_se), \qquad \phi_{jk} \chi(eP_se) 
\]
($s=1,...,6,\ jk=00,01,10,11$ ). For ease of notation, let us write
\[
\chi_s := \chi(e P_s(\theta) e)
\]
for the cutdown projections by $e$ (for large enough parameter).  We also find it convenient to organize these classes for $e$ into a vector
\[
\vec \tau(e) =  \begin{bmatrix} 
\tau \chi_1 & \tau \chi_2 & \tau \chi_3 & \tau \chi_4 & \tau \chi_5& \tau \chi_6
\end{bmatrix}  
\]
consisting of the canonical traces of the cutdowns, together with a topological K-matrix involving the unbounded traces which we denote by
\[
K(e) = 
\begin{bmatrix} 
\phi_{00}(\chi_1) & \phi_{00}(\chi_2) & \phi_{00}(\chi_3) & \phi_{00}(\chi_4) & 
\phi_{00}(\chi_5) & \phi_{00}(\chi_6)
\\
\phi_{01}(\chi_1) & \phi_{01}(\chi_2) & \phi_{01}(\chi_3) & \phi_{01}(\chi_4) & 
\phi_{01}(\chi_5) & \phi_{01}(\chi_6)
\\
\phi_{10}(\chi_1) & \phi_{10}(\chi_2) & \phi_{10}(\chi_3) & \phi_{10}(\chi_4) & 
\phi_{10}(\chi_5) & \phi_{10}(\chi_6)
\\
\phi_{11}(\chi_1) & \phi_{11}(\chi_2) & \phi_{11}(\chi_3) & \phi_{11}(\chi_4) & 
\phi_{11}(\chi_5) & \phi_{11}(\chi_6)
\end{bmatrix}
\]
wherein the $(jk,s)$-entry consists of the unbounded trace $\phi_{jk}(\chi_s)$. Therefore, in the Flip orbifold, the central equivalence class of $e$ is fully determined by the pair consisting of the canonical trace vector $\vec \tau(e),$ and the $4\times6$ topological matrix $K(e)$.
\medskip

\begin{ntn}\label{notation}
We shall use the divisor delta function $\delta_n^m = 1$ if $n$ divides $m$, and $\delta_n^m = 0$ otherwise. We also use the notation $e(t) := e^{2\pi it}$. Thus, $\rmsumop_{j=0}^{n-1} e(\tfrac{mj}n) = n \delta_n^m$.
\end{ntn}

\begin{std}\label{standing} Without loss of generality we can assume that there are infinitely many consecutive convergents $\tfrac{p}q, \tfrac{p'}{q'}$ of $\theta$ such that
\[
\frac{p}q < \theta < \frac{p'}{q'}, \qquad \frac12 < q'(q\theta - p)  < \frac45.
\]
(See Remark \ref{tracecondition} for why.) This will be assumed in the hypotheses of Theorems \ref{MaintheoremA}, \ref{Kmatrixthm}, and \ref{S3imagesofe}.
\end{std}
\medskip

The AC Flip-invariant Powers-Rieffel projection in $A_\theta^\Phi$ that will be studied throughout this paper is 
\begin{align}\label{ezetaE}
e \ := \ e_{q',q,p,\theta} \ &=\ G_\tau(U^{q'}) V^{-q} + F_\tau(U^{q'}) + V^q G_\tau(U^{q'})
\\
&\ = \  \zeta_{q',q,p,\theta} \ \mathcal E(q'(q\theta-p))		\label{ezeta}
\end{align}
depending on the convergent parameters $p,q,p',q',$ as stated in the Standing Condition, with trace $\tau(e) = q'(q\theta - p) \in(\tfrac12,\tfrac45)$. The Rieffel functions $F_\tau, G_\tau$ in \eqref{ezetaE} and the continuous field $\mathcal E(t)$ in \eqref{ezeta} are described in Section 2.2 (see \eqref{rieffelproj}), and the C*-morphism $\zeta$ is defined in \eqref{zetamorphism}.  No confusion should arise with occasionally denoting the trace of $e$ simply by $\tau := q'(q\theta - p)$.
\medskip

In light of this background, our results can now be stated in terms of the following three theorems (some of the notation of which is explained later).
\medskip

\begin{thm}\label{MaintheoremA}
Let $\theta$ be irrational with convergents satisfying the Standing Condition. Let $t \to P(t)$ be a continuous section, defined in some neighborhood of $\theta,$ of smooth projections of the continuous field $\{ A_t^\Phi \}_{0<t<1}$ of Flip orbifolds.  Then the unbounded trace of the cutdown projection $\chi(e P e),$ where $P=P(\theta),$ is given by
\[
\phi_{jk} \chi(e P e) = a_{jk}^- C_0(P) + a_{jk}^+ C_1(P)
\]
for $jk=00,01,10,11,$ where
\begin{equation}\label{CP}
C_0(P) = \phi_{00}(P) + \phi_{10}(P), \qquad
C_1(P) = \phi_{0,q'}(P) + (-1)^{p'} \phi_{1,q'}(P)
\end{equation}
and
\[
a_{jk}^- =
\begin{cases} \tfrac12 (-1)^{pjk},  &\text{    for even } q' \\ \\
\updelta_2^{q} \updelta_2^{j} \updelta_2^{k}  + \tfrac12 (-1)^{pjk}\updelta_2^{q-1}   &\text{     for odd } q'
\end{cases}
\]
\smallskip
\[
a_{jk}^+ =
\begin{cases} 
 \tfrac12 (-1)^{j+pjk} &\text{    for even } q' \\ \\
 \tfrac12 (-1)^{p'j} \left[ \updelta_2^{k-1} + (-1)^{pj} \updelta_2^{q-k-1}  \right]    &\text{     for odd } q'.
\end{cases}
\]
The canonical trace of the cutdown is 
\[
\tau \chi(e Pe) = \tau(e) \tau_{p'/q'}(P(\tfrac{p'}{q'}))
\]
where $\tau_{p'/q'}$ is the canonical normalized trace of the rational rotation algebra $A_{p'/q'}$.
\end{thm}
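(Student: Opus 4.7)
The plan is to exploit the factorization $e = \zeta_{q',q,p,\theta}\,\mathcal{E}(q'(q\theta-p))$ from \eqref{ezeta} to reduce the cutdown computation to a more tractable problem in the Rieffel continuous field, and then extract the numerical invariants via the Connes-Chern character monomorphism $\T$. Since $e$ is AC and $P(\theta)$ is smooth, the commutator $[e,P(\theta)]$ tends to zero as $q \to \infty$, so the spectrum of $eP(\theta)e$ clusters near $\{0,1\}$ and the characteristic projection $\chi(ePe)$ is a bona fide element of $A_\theta^\Phi$ whose K-invariants are approximated to arbitrary accuracy by $\tau(ePe)$ and $\phi_{jk}(ePe)$ (using joint continuity of the unbounded traces on smooth bounded subsets). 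Throughout, the C*-morphism $\zeta$ carries the cutdown problem into the source rotation algebra where the Heisenberg relation \eqref{VUUV} supplies the governing combinatorics.

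For the canonical trace identity $\tau\chi(ePe) = \tau(e)\,\tau_{p'/q'}(P(p'/q'))$, I would first apply traciality and approximate centrality to replace $\tau(ePe)$ with $\tau(eP(\theta))$, then Fourier-expand $P(\theta)$ in the unitaries $U,V$. Only those Fourier modes lying in the sub-C*-algebra generated by $U^{q'}, V^{q}$ survive the $q\to\infty$ averaging, the remaining modes commuting with $e$ only up to phases that average to zero. Since $q'\theta - p' \to 0$ along the convergents, this subalgebra is asymptotically isomorphic to the rational rotation algebra $A_{p'/q'}$; the continuous section $P$ specializes there to $P(p'/q')$, and the scalar factor $\tau(e) = q'(q\theta-p)$ pulls out as the trace of the cutdown identity.

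The unbounded trace computations $\phi_{jk}\chi(ePe)$ constitute the main computational load. I would expand the flip-symmetric $P(\theta) = \rmsumop_{m,n} c_{m,n} U^mV^n$, substitute the explicit formula $e = G_\tau(U^{q'})V^{-q} + F_\tau(U^{q'}) + V^{q}G_\tau(U^{q'})$ from \eqref{ezetaE}, and expand $ePe$ into a triple sum of monomials of the form $(\text{function of }U^{q'})\cdot U^m V^{n\pm q \text{ or }n}$. Commuting the $V$-factors to the right via \eqref{VUUV} produces phases that, after the divisor identity $\rmsumop_j e(mj/n) = n\,\delta_n^m$ from Notation~\ref{notation}, select precisely the Fourier modes of $P$ at lattice points $(0,0)$ and $(\pm q',0)$, in their flip-symmetric combinations $C_0(P)$ and $C_1(P)$ of \eqref{CP}. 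The coefficients $a_{jk}^\pm$ then emerge from pairing these selected modes with the Fourier data of the Rieffel functions $F_\tau, G_\tau$ and applying the derivations that define $\phi_{jk}$ in Section~2.1.

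The main obstacle, and the source of the parity dichotomy on $q'$, will be the sign bookkeeping produced by commuting $V^q$ past $U^{q'}$: the identity $V^q U^{q'} = e(qq'\theta)U^{q'}V^q$ combined with $q'\theta \approx p'$ yields a phase close to $e(qp')$, and iterated accumulation of such phases through the triple sum produces the signs $(-1)^{pjk}, (-1)^{j+pjk}$, and $(-1)^{p'j}$. When $q'$ is even the half-integer shifts implicit in the selection rules align with the Fourier lattice and one obtains the clean coefficients $\tfrac12(-1)^{pjk}$ and $\tfrac12(-1)^{j+pjk}$; when $q'$ is odd they do not, forcing extra parity constraints on $q$ and manifesting as the divisor-delta terms $\updelta_2^{q}\updelta_2^j\updelta_2^k$, $\updelta_2^{q-1}$, $\updelta_2^{k-1}$, $\updelta_2^{q-k-1}$. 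Once these combinatorial reductions are carried out and consolidated against \eqref{CP}, the stated formulae fall out; the canonical trace calculation from the previous paragraph completes the proof.
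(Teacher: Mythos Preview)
Your approach diverges substantially from the paper's, and as written it has a genuine gap.

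The paper does \emph{not} proceed by direct Fourier expansion of $ePe$. Instead, it realizes $e = \Dinner{f}{f}$ as a Rieffel C*-inner product (Section~4), which yields a Morita isomorphism $\eta: eA_\theta e \to C^*(D^\perp,\conj{\frak h})$. Via the cutdown approximations $\eta(eUe) \approx V_3^{-1}$, $\eta(eVe) \approx V_1^{-1}$ (Section~5), one obtains $\eta(\chi(ePe)) \approx \pi\,\Xi\,P(p'/q')$ where $\pi: A_{p'/q'} \to C^*(V_1,V_3)$ is the canonical representation and $\Xi$ a linear anti-automorphism. The key structural step is that $\phi_{jk}\eta^{-1}$ restricted to the circle algebra $C^*(V_1,V_3)$ is a $\Phi'$-trace, and Appendix~B shows that the space of $\Phi'$-traces there is exactly \emph{two-dimensional} with basis $\psi_1,\psi_2$. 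This is what forces the factorization $\phi_{jk}\chi(ePe) = a_{jk}^-C_0(P) + a_{jk}^+C_1(P)$: the coefficients $C_0, C_1$ arise as $\psi_1\pi\Xi$ and $\psi_2\pi\Xi$ applied to $P(p'/q')$. The numbers $a_{jk}^\pm$ are then extracted by evaluating $\phi_{jk}\eta^{-1}$ at $1$, $V_1$, $V_3$, which requires the substantial Lemma~7.1 computing $\phi_{jk}\Dinner{fV_1}{f}$ and $\phi_{jk}\Dinner{fV_3}{f}$.

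Your direct expansion route is not obviously wrong in principle, but the crucial claim---that after commutation and divisor-delta identities the computation ``selects precisely the Fourier modes of $P$ at lattice points $(0,0)$ and $(\pm q',0)$, in their flip-symmetric combinations $C_0(P)$ and $C_1(P)$''---is both vague and conceptually off. The quantities $C_0(P), C_1(P)$ are \emph{not} individual Fourier coefficients of $P$ at isolated lattice points; they are unbounded trace values, each an infinite sum over half of the Fourier coefficients of $P$ (selected by parity). Nothing in your sketch explains why the answer depends on $P$ only through these two particular linear combinations of the $\phi_{jk}(P)$, rather than through all four separately or through some other data. In the paper's argument, that reduction is \emph{forced} by the two-dimensionality of the $\Phi'$-trace space on $M_{q'}(C(\mathbb T))$; without an analogue of that structural fact, your triple-sum expansion has no mechanism to collapse to the stated form. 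The sign bookkeeping you describe is plausible post hoc, but the determination of $a_{jk}^\pm$ in the paper rests on evaluating C*-inner products $\Dinner{fV_j}{f}$ via Poisson summation (Section~7), not on commuting $V^q$ past $U^{q'}$ in a triple sum.
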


\medskip

With this result established, we can write down the topological K-matrix of the projection $e$ according to the following theorem.

\medskip

\begin{thm}\label{Kmatrixthm}
Let $\theta$ be an irrational number. The Powers-Rieffel projection $e$ has K-matrix given by the $4\times 6$ matrices according to parities (as indicated by subscripts):
\begin{equation*} 
K(e_{q',q,p,\theta}) \ = \ \frac12 
\begin{bmatrix} 
2 & 0 & 1 & 1 & 1 & 1 
\\ 
2 & 0 & 1 & 1 & 1 & 1 
\\ 
0 & 0 & 1 & -1 & 1 & -1
\\ 
0 & 0 & (-1)^p & (-1)^{p+1} & (-1)^p & (-1)^{p+1} 
\end{bmatrix}_{q' \text{ even}}		
\end{equation*}
\begin{equation*}
K(e_{q',q,p,\theta}) = 
\begin{bmatrix} 
1 & 0 & 1 & 0 & 1 & 0 
\\ 
0 & 0 & \updelta_2^{p'} & \updelta_2^{p'-1} & -\updelta_2^{p'} & -\updelta_2^{p'-1}
\\ 
0 & 0 & 0 & 0 & 0 & 0
\\ 
0 & 0 & 0 & 0 & 0 & 0
\end{bmatrix}_{q \text{ even}}
\end{equation*}

\begin{equation*}
K(e_{q',q,p,\theta}) =
\frac12 \begin{bmatrix} 
1 & 0 & 1+\updelta_2^{p'} & \updelta_2^{p'-1} & \updelta_2^{p'-1} & -\updelta_2^{p'-1} 
\\ 
1 & 0 & 1+\updelta_2^{p'} & \updelta_2^{p'-1} & \updelta_2^{p'-1} & -\updelta_2^{p'-1} 
\\ 
1 & 0 & \updelta_2^{p'-1} &  -\updelta_2^{p'-1} & 1+\updelta_2^{p'} & \updelta_2^{p'-1}
\\ 
(-1)^p & 0 & \updelta_2^{p'-1} & -\updelta_2^{p'-1} & (-1)^p(1 + \updelta_2^{p'}) & 
\updelta_2^{p'-1} 
\end{bmatrix}_{q,q' \text{ both odd}}.
\end{equation*}
In addition, the canonical trace vector is
\[
\vec \tau(e_{q',q,p,\theta}) = 
(q\theta-p) \begin{bmatrix} q' & \ p_2 & \ p' & \ p' & \ p' & \ p' \end{bmatrix} 
\]
where
\[
p_2 := 
 \begin{cases} 
2p' &\text{for } 0 < \theta < \tfrac12
\\
2(q' - p') &\text{for } \tfrac12 < \theta < 1.
\end{cases}
\]
 \end{thm}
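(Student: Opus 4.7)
The strategy is to apply the master formula of Theorem \ref{MaintheoremA} to each of the six basis projections $P_1,\dots,P_6$ constructed in Section 2.3 (see \eqref{basis} and \eqref{Kbasis}), thereby filling in the columns of $K(e)$ one at a time; the canonical-trace vector $\vec\tau(e)$ is computed separately from the identity $\tau\chi(eP_se)=\tau(e)\tau_{p'/q'}(P_s(p'/q'))$ of Theorem \ref{MaintheoremA}.

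The first step is to tabulate, for $s=1,\dots,6$, the two scalars
$$C_0(P_s)=\phi_{00}(P_s)+\phi_{10}(P_s), \qquad C_1(P_s)=\phi_{0,q'}(P_s)+(-1)^{p'}\phi_{1,q'}(P_s).$$
Since each $P_s$ is a controlled polynomial in the canonical generators of $A_\theta^\Phi$, the unbounded traces $\phi_{jk}$ pick out only specific monomials, making the table sparse; the entries of $C_1(P_s)$ in particular reduce to parity indicators $\updelta_2^{p'}, \updelta_2^{p'-1}$ depending on whether the $q'$-th harmonic present in $P_s$ lines up with the parity of $p'$, whereas $C_0(P_s)$ returns essentially $0$ or $1$.

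The second step is the case split dictated by $p'q-pq'=\pm1$, which rules out the possibility of both $q$ and $q'$ being even, leaving exactly three cases. In case $q'$ even (so $q$ odd) the coefficients of Theorem \ref{MaintheoremA} collapse to $a_{jk}^-=\tfrac12(-1)^{pjk}$ and $a_{jk}^+=\tfrac12(-1)^{j+pjk}$, so that $\phi_{jk}(\chi_s)=\tfrac12(-1)^{pjk}\bigl[C_0(P_s)+(-1)^jC_1(P_s)\bigr]$, yielding the first matrix after substitution. In case $q$ even (so $q'$ odd) one checks that $\updelta_2^{q-1}=0$ kills every $a_{jk}^-$ except $a_{00}^-=1$, while $\updelta_2^{k-1}+(-1)^{pj}\updelta_2^{q-k-1}$ forces $a_{jk}^+$ to vanish outside $(j,k)=(0,1)$, where $a_{01}^+=1$; thus only rows $\phi_{00}$ and $\phi_{01}$ survive, giving the sparse second matrix. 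In the both-odd case none of the factors vanishes: $a_{jk}^-=\tfrac12(-1)^{pjk}$ again, while $a_{jk}^+=\updelta_2^{k-1}\updelta_2^{pj}(-1)^{p'j}$ combines multiplicatively with the $\updelta_2^{p'},\updelta_2^{p'-1}$ entries of $C_1(P_s)$ to produce the third matrix.

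Finally, for $\vec\tau(e)$ I would evaluate each $P_s(p'/q')$ inside the rational rotation algebra $A_{p'/q'}$. Five of the six basis sections descend to Powers-Rieffel-type projections of normalized trace $p'/q'$, giving the five $p'$ entries; the exceptional column is $P_2$, whose trace function $t\mapsto\tau_t(P_2(t))$ is piecewise linear with a corner at $t=\tfrac12$, equal to $2p't$ for $t<\tfrac12$ and $2(q'-p')t$ for $t>\tfrac12$. Multiplying through by $\tau(e)=q'(q\theta-p)$ cancels the $q'$ in the denominator and produces the stated vector. The principal obstacle is the bookkeeping in the both-odd case, where none of $C_0, C_1, a_{jk}^\pm$ is free to vanish and four compounded signs $(-1)^{pj},(-1)^{p'j},(-1)^{pjk}$ interact with the divisor deltas; a clean organization via a single master table of pairs $\{(C_0(P_s),C_1(P_s))\}_{s=1}^6$ together with a subordinate split on the parity of $p'$ (using $\updelta_2^{p'}+\updelta_2^{p'-1}=1$) should keep the combinatorics manageable.
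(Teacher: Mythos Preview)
Your overall strategy coincides with the paper's: apply Theorem \ref{MaintheoremA} column-by-column to the basis $P_1,\dots,P_6$, tabulate the pairs $(C_0(P_s),C_1(P_s))$ from \eqref{Kbasis}, then split into the three parity cases for $(q,q')$. The paper streamlines the bookkeeping by writing $K(e)=AC$ as a matrix product, with $A$ the $4\times 2$ matrix of coefficients $a_{jk}^\pm$ and $C$ the $2\times 6$ matrix whose rows are $C_0(P_s)$ and $C_1(P_s)$; this is exactly your ``master table'' idea, and in each parity case one simply multiplies the two small matrices.

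Two concrete slips in your sketch would derail the computation. First, in the both-odd case your formula $a_{jk}^+=\updelta_2^{k-1}\updelta_2^{pj}(-1)^{p'j}$ is wrong: starting from Theorem \ref{MaintheoremA} with $q$ odd one has $\updelta_2^{q-k-1}=\updelta_2^{k}$, so
\[
a_{jk}^+=\tfrac12(-1)^{p'j}\bigl[\updelta_2^{k-1}+(-1)^{pj}\updelta_2^{k}\bigr]=\tfrac12(-1)^{p'j}(-1)^{pj(k+1)}=\tfrac12(-1)^{j+pjk},
\]
the last step using that $p+p'$ is odd when $q,q'$ are both odd. Your expression drops the $\updelta_2^{k}$ term and the factor $\tfrac12$, and would give the wrong $A$-matrix (for instance $a_{00}^+=0$ instead of $\tfrac12$). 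Second, for the trace vector you say five of the six basis sections have normalized trace $p'/q'$; in fact only $P_3,\dots,P_6$ do. The section $P_1$ is the identity, with $\tau_{p'/q'}(P_1)=1$, which is what produces the first entry $q'(q\theta-p)$ rather than $p'(q\theta-p)$. So there are two exceptional columns ($P_1$ and $P_2$), not one.
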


\medskip

\noindent{\bf Application.} One interesting application of the preceding theorem is determination of whether or not $\alpha(e)$ and $\beta(e)$ are centrally equivalent for smooth automorphisms $\alpha, \beta$ of the Flip orbifold. Or, equivalently, when is $\alpha(e)$ centrally equivalent to $e$? We will answer this for the canonical automorphisms given by the Fourier $\sigma$ and Cubic $\kappa$ transforms (studied in \ccite{FB} \ccite{BW}  \ccite{SWChern} \ccite{SWcjm} \ccite{SWcrelles}) defined by
\begin{align*}
&  &  & & \sigma(U) &= V^{-1},  &	\sigma(V) &= U		&  &  & & 
\\
&  &  & & \kappa(U) &= e(-\tfrac\theta2)U^{-1}V,  & \kappa(V) &= U^{-1}.	&  &  & & 
\end{align*}
These have order 2 and 3, respectively, on the Flip orbifold, and they have a simple relations with the unbounded traces: $\sigma$ swaps $\phi_{01}$ and $\phi_{10},$ fixing $\phi_{00}, \phi_{11},$ while $\kappa$ induces the cyclic permutation $\phi_{11} \to \phi_{01} \to \phi_{10},$ and fixing $\phi_{00}$. With this information at hand, one easily calculates the action of these automorphisms on the basis $[P_1], \dots, [P_6]$ of $K_0(A_\theta^\Phi)$ mentioned earlier. This would then lead to the following result.

\medskip

\begin{thm}\label{S3imagesofe}
Considering the Powers-Rieffel AC projection $e$ (given by \eqref{ezetaE}) as running over parameters where $q$ is even and $p'$ is odd, one has its K-matrix and those of its Fourier and Cubic transforms
\[
K(e) = \begin{bmatrix}
1 & 0 & 1 & 0 & 1  & 0
\\
0 & 0 & 0 & 1 & 0 &  -1
\\
0  & 0 & 0 & 0 & 0 & 0
\\
0  & 0 & 0 & 0 & 0 & 0
\end{bmatrix},
\qquad
K(\kappa(e)) = \begin{bmatrix}
1 & 0 & 1 & 1 & 0  & 0
\\
0  & 0 & 0 & 0 & 0 & 0
\\
0 & 0 & 0 & 0 & -1 & 1
\\
0  & 0 & 0 & 0 & 0 & 0
\end{bmatrix}
\]
\[
K(\kappa^2(e)) = \begin{bmatrix}
1 & 0 & 1 & 0 & 0  & 1
\\
0 & 0 & 0 & -1 & 1 & 0
\\
0  & 0 & 0 & 0 & 0 & 0
\\
0  & 0 & 0 & 0 & 0 & 0
\end{bmatrix},
\qquad
K(\sigma(e)) = \begin{bmatrix}
1 & 0 & 1 & 1 & 0  & 0
\\
0  & 0 & 0 & 0 & 0 & 0
\\
0 & 0 & 0 & 0 & 1 &  -1
\\
0  & 0 & 0 & 0 & 0 & 0
\end{bmatrix}
\]
\[
K(\sigma\kappa(e)) = \begin{bmatrix}
1 & 0 & 1 & 0 & 1 & 0
\\
0 & 0 & 0 & -1 & 0 & 1
\\
0  & 0 & 0 & 0 & 0 & 0
\\
0  & 0 & 0 & 0 & 0 & 0
\end{bmatrix},
\qquad
K(\sigma\kappa^2(e)) = \begin{bmatrix}
1 & 0 & 1 & 0 & 0  & 1
\\
0  & 0 & 0 & 0 & 0 & 0
\\
0 & 0 & 0 & 1 & -1 & 0
\\
0  & 0 & 0 & 0 & 0 & 0
\end{bmatrix}.
\]
In particular, as these matrices are pairwise distinct, the AC projections
\begin{equation}\label{ecyclic}
e, \quad \sigma(e), \quad \kappa(e), \quad \kappa^2(e), \quad \sigma\kappa(e), \quad  \sigma\kappa^2(e)
\end{equation}
are pairwise not centrally equivalent. 
\end{thm}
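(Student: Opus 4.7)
The plan is to derive each of the five images $K(\sigma(e)), K(\kappa(e)), K(\kappa^2(e)), K(\sigma\kappa(e)), K(\sigma\kappa^2(e))$ from $K(e)$ by pure functoriality, and then read off pairwise distinctness of the resulting matrices. First, I specialize Theorem \ref{Kmatrixthm} to the hypothesis ``$q$ even, $p'$ odd'' (so $\updelta_2^{p'}=0$ and $\updelta_2^{p'-1}=1$): direct substitution into the second matrix of that theorem recovers the displayed $K(e)$. Next, for any smooth automorphism $\alpha$ of $A_\theta^\Phi$, continuous functional calculus gives
\[
\chi\bigl(\alpha(e)\,P\,\alpha(e)\bigr)=\alpha\bigl(\chi(e\,\alpha^{-1}(P)\,e)\bigr),
\]
so on $K_0$ one obtains
\[
\phi_{jk}\bigl(\chi(\alpha(e)\,P_s\,\alpha(e))\bigr) \;=\; (\phi_{jk}\circ\alpha)\bigl(\chi(e\,\alpha^{-1}(P_s)\,e)\bigr).
\]
The stated permutation actions on the unbounded traces — $\sigma$ swaps $\phi_{01}\leftrightarrow\phi_{10}$ and fixes $\phi_{00},\phi_{11}$; $\kappa$ cycles $\phi_{11}\to\phi_{01}\to\phi_{10}\to\phi_{11}$ and fixes $\phi_{00}$ — combined with the induced $K_0$-action of $\alpha_*^{-1}$ on $[P_1],\ldots,[P_6]$, show that $K(\alpha(e))$ is obtained from $K(e)$ by a signed row-and-column permutation.

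The main calculational task is therefore to determine the actions of $\sigma_*$ and $\kappa_*$ on $[P_1],\ldots,[P_6]\in K_0(A_\theta^\Phi)=\mathbb{Z}^6$. Since the Connes--Chern character $\T$ is injective and the basis classes have the explicit descriptions from Section 2.3, each of $\sigma_*,\kappa_*$ is represented by a concrete $6\times 6$ signed permutation matrix; multiplying $K(e)$ by those matrices, together with the accompanying row-action of $\sigma,\kappa$ on $\{\phi_{jk}\}$, yields the five claimed K-matrices mechanically. A helpful consistency check is that $\tau\circ\alpha=\tau$ (uniqueness of the normalized trace), which forces $\vec\tau(\alpha(e))=\vec\tau(e)$ throughout the $S_3$-orbit; in particular $\vec\tau$ alone cannot separate these six projections, so it is the K-matrix that must do the work.

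Finally, the six displayed $4\times 6$ matrices are pairwise distinct by inspection — their second and third rows carry a distinguishing pattern of nonzero $\pm 1$ entries in every case. Since the pair $(\vec\tau(\cdot),K(\cdot))$ is a complete central-equivalence invariant by Kishimoto's theorem (as restated and applied in Section 2.4), no two projections in \eqref{ecyclic} can be centrally equivalent. The chief obstacle is pinning down the $6\times 6$ signed permutation tables for $\sigma_*$ and $\kappa_*$ on $[P_1],\ldots,[P_6]$: signs and index shifts coming from the specific presentation of the $P_s$ must be tracked carefully, after which the rest of the argument reduces to routine matrix multiplication and visual comparison.
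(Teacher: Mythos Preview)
Your proposal is correct and follows essentially the same route as the paper: specialize Theorem \ref{Kmatrixthm} to the case $q$ even, $p'$ odd to obtain $K(e)$, then use the functoriality identity \eqref{automP} together with the two-step recipe in Section 2.4 (permute columns by $\alpha_*^{-1}$ on $[P_i]$, permute rows by the action of $\alpha$ on the $\phi_{jk}$) to produce the remaining five matrices, and conclude by inspection. One small sharpening: for $\sigma$ and $\kappa$ (and hence all of $S_3\subset\mathrm{SL}(2,\mathbb Z)$) the induced action on $[P_1],\dots,[P_6]$ and on $\{\phi_{jk}\}$ is a genuine permutation, not merely a signed one---signs enter only for the toral automorphisms $\gamma_j$---so your ``signed permutation'' language is more general than needed here.
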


\medskip

Note that the six projections in \eqref{ecyclic} all have the same orbifold $K_0$ class and constitute the $S_3$ orbit of $e$ (where the symmetric group $S_3$ is generated by $\sigma$ and $\kappa$). Indeed, in the case considered in the preceding theorem where $q$ is even ($q',p$ odd), the Connes-Chern character of $e$ can be obtained from \eqref{etraces} as
\[
\T(e) = (\tau(e);\ 1, 0, 0, 0)
\]
where the trace of $e$ is also the trace of any automorphism of $e$ (by uniqueness of the normalised canonical trace). In view of what was just noted regarding how $\sigma, \kappa$ act on the $\phi_{jk},$ we see that all the projections in \eqref{ecyclic} have exactly the same Connes-Chern character as $e,$ so that they are all Murray-von Neumann equivalent (in the Flip orbifold). Theorem \ref{S3imagesofe}, however, says that they are pairwise centrally inequivalent.
\medskip

In \ccite{WaltersModular} we showed that in fact for any AC projection $e$ in the Flip orbifold, its $S_3$ orbit \eqref{ecyclic} gives the only possible central classes, in the sense that for any canonical automorphism $\alpha$ arising from SL$(2,\mathbb Z),$ the projection $\alpha(e)$ is centrally equivalent to one of the projections in \eqref{ecyclic}.  The point of Theorem \ref{S3imagesofe} is then that all six projections can be pairwise centrally distinct. It is possible also to give examples where some or all of the projections in \eqref{ecyclic} are centrally equivalent - see Theorem 1.4 of  \ccite{WaltersModular}.
\medskip

We have included many details in our calculations in this paper so as to save the reader from numerous and onerous checking (and hopefully to also be clear about our reasoning). We hope this may be of help. 

\medskip

\noindent{\bf Structure Of Paper.} Let's summarize what we do in this paper.

In Section 2 we gather the necessary background material, notation, results and introduce our notation for the K-matrix relative to a constructed basis for group $K_0(A_\theta^\Phi)$ required by our proofs.

In Section 3 the approximately central Powers-Rieffel projection $e$ is constructed from the continuous field constructed in Section 2 and its unbounded traces are calculated.

In Section 4, the projection is realized as a C*-inner product $e = \Dinner{f}{f}$ using Riefflel's equivalence bimodule paradigm \ccite{MRb}. This realization will facilitate computation of the topological invariants of the cutdown $\chi(ePe)$ of any projection $P$ by $e,$ which is carried out in Section 5. This will prove Theorem \ref{MaintheoremA}. 

In Section 6, Theorem \ref{MaintheoremA} is applied to the $K_0$-basis constructed in Section 2.3 in order to obtain the full K-matrix of $e$ (for various parity situations of integer parameters that $e$ depends on). This will then prove Theorem \ref{Kmatrixthm}.

In Section 7, the unbounded traces of two technical C*-inner products required in Section 6  are computed (see Lemma \ref{phiV1V3}).

Sections 8 and 9 are appendices involving some basic unbounded trace computations used in the paper. (The Acknowledgement paragraph is right before the References.)

\medskip

In a forthcoming paper \ccite{WaltersKAC} we calculate the K-matrix of Fourier-invariant  projections in the Fourier orbifold $A_\theta^\sigma$ for the Fourier transform $\sigma$. The greater complication in the Fourier case arises from the fact that Fourier-invariant projections do not have a Powers-Rieffel form, from additional unbounded traces on $K_0(A_\theta^\sigma) = \mathbb Z^9,$ as well as additional $K_0$-basis elements, giving rise to a $9\times6$ K-matrix.

\textcolor{blue}{\Large \section{Background Material}}

In this section we write down the relevant Connes-Chern character for the Flip orbifold; define a continuous field of Powers-Rieffel projections, which is a continuous section of the continuous field of Flip orbifolds $\{A_t^\Phi: 0<t<1\};$ construct a canonical basis consisting of continuous fields of projections for $K_0(A_t^\Phi);$ state Kishomoto's Theorem (\ccite{AK}, Theorem 2.1) in the form we require; and state a couple of Poisson Summation formulas used in our calculations.

\bigskip

\subsection{The Connes-Chern Character} 
The Flip automorphism $\Phi$ of the rotation C*algebra $A_\theta$ has four associated unbounded $\Phi$-traces $\phi_{jk}^\theta$ defined on the basic unitaries $U^mV^n,$ satisfying \eqref{VUUV}, by (see \ccite{SWa} or \ccite{SW-CMP})
\begin{equation}
\phi_{jk}^\theta(U^mV^n)\ =\ e(-\tfrac{\theta}2 mn)\,\updelta_2^{m-j} \updelta_2^{n-k}
\end{equation}
for $jk = 00, 01,10,11$,  $m,n\in \mathbb Z$ (and $\updelta_a^{b}$ was defined in Notation \ref{notation}). Invariably, we may write $\phi_{jk}^\theta$ simply as $\phi_{jk}$ when $\theta$ is understood. These are linear functionals defined on the canonical smooth dense *-subalgebra $A_\theta^\infty$ satisfying the $\Phi$-trace property
\[
\phi_{jk}(xy) = \phi_{jk}(\Phi(y)x)
\]
for $x,y\in A_\theta^\infty$. (Of course, such maps are $\Phi$-invariant). In addition, they are Hermitian maps: they are real on Hermitian elements.  Clearly, on the smooth orbifold $A_\theta^{\Phi,\infty}$ -- the fixed point *-subalgebra of $A_\theta^\infty$ under the Flip -- they give rise to trace functionals that are not continuous in the C*-norm. Together with the canonical trace $\vartau$ one has the Connes-Chern character which we write as
\begin{equation} 
\T : K_0(A_\theta^\Phi) \to \mathbb R^5, \qquad
\T(x) = (\vartau(x);\, \phi_{00}(x),\, \phi_{01}(x),\, \phi_{10}(x),\, \phi_{11}(x)).
\end{equation}
In \ccite{SWa} (Proposition 3.2) this map is known to be injective for irrational $\theta$.\footnote{In \cite{SWa} we worked with the crossed product algebra $A_\theta \times_\Phi \mathbb Z_2$, but since this algebra is strongly Morita equivalent to the fixed point algebra, the injectivity follows for the latter and is easy to see.} For the identity element one has $\T(1) = (1; 1, 0, 0, 0)$. The ranges of the traces $\phi_{jk}$ on projections in $A_\theta^\Phi$ are known to be half-integers, while the canonical trace has range $(\mathbb Z + \mathbb Z\theta) \cap [0,1]$ on projections.

\bigskip

\subsection{Continuous field of Rieffel projections}

There is a natural continuous (section) field $\mathcal E: [\frac12,1) \to \{A_t\}$ of Flip-invariant Powers-Rieffel projections
\begin{equation}\label{rieffelproj}
\mathcal E(t) = G_t(U_t) V_t^{-1} + F_t(U_t) + V_t G_t(U_t) 
\end{equation}
where 
\[
F_t(x) = \rmsumop_{n \in\mathbb Z} f_t(x+n), \qquad 
G_t(x) = \rmsumop_{n \in\mathbb Z} g_t(x+n)
\]
are periodizations of the $t$-parameterized family ($\tfrac12 \le t < 1$) of continuous (or smooth) functions $f_t(x), g_t(x)$ on $\mathbb R$, compactly supported in $[-\tfrac12,\tfrac12]$, as graphed in Figure \ref{figfg}.

\begin{figure}[H]
\includegraphics[width=3in,height=1.5in]
{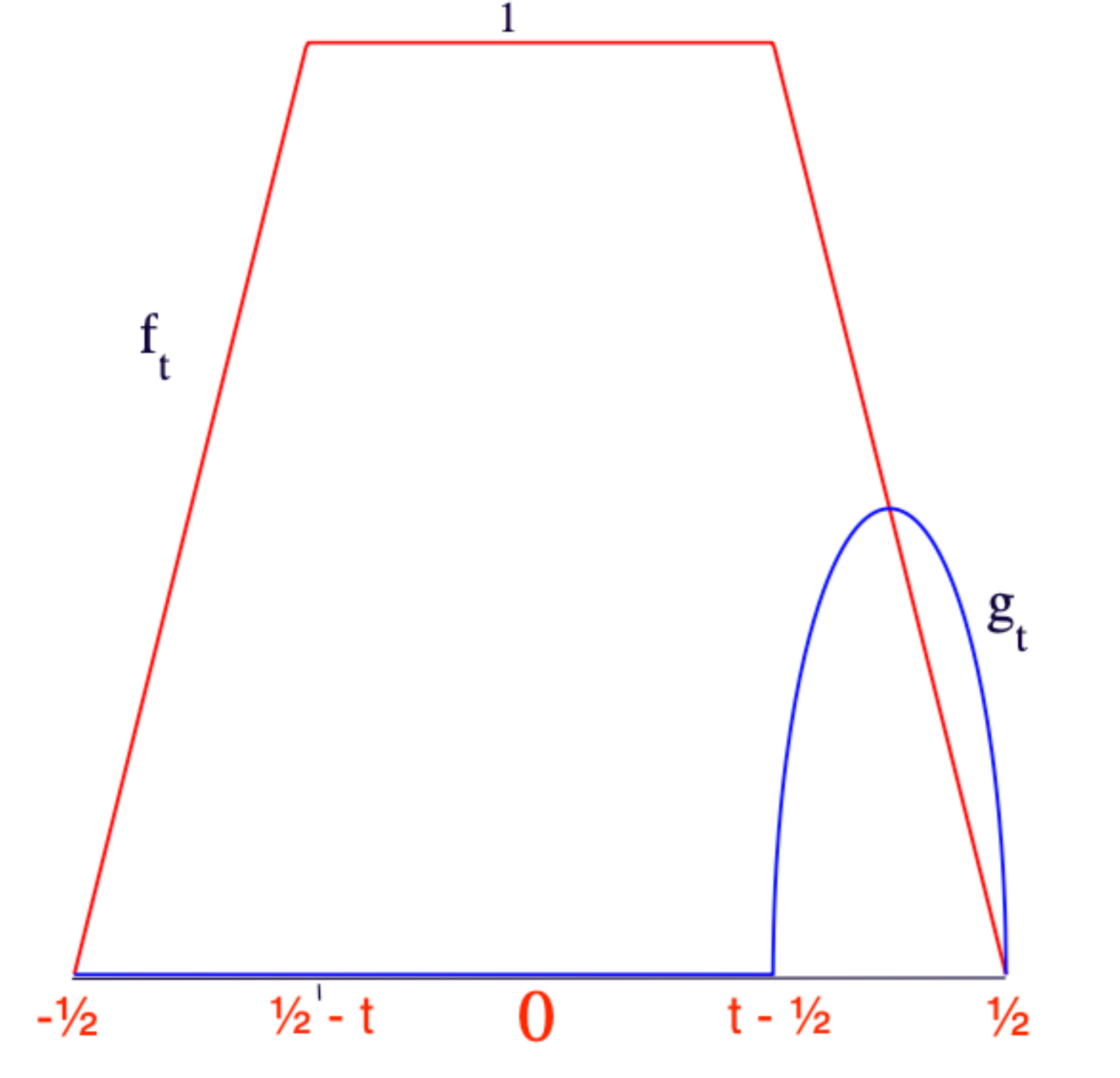} 
\caption{\SMALL{Graphs of $f_t$ and $g_t$. }}\label{figfg}
\end{figure}
One requires 
\begin{equation}\label{ftxt}
f_t(x+t) = 1-f_t(x)
\end{equation}
for $-\tfrac12 \le x \le \tfrac12 - t,$ sets $f_t = 1$ on $\tfrac12 - t \le x \le t - \tfrac12$, and $f_t(-x)=f_t(x)$ for $-\tfrac12 \le x \le \tfrac12$.\footnote{This is a slight modification of the usual constructions; e.g., see proof of Lemma 2.1 \ccite{SWa}.} The condition \eqref{ftxt} also holds for $-\tfrac12 \le x \le 0$ (trivially for $\tfrac12 - t < x \le 0$). Since $f_t$ is even, we have
\begin{equation}\label{ftx1}
f_t(x) + f_t(t - x) = 1, \quad \text{for }  0 \le x \le \tfrac12.
\end{equation}
Taking $x = - \tfrac{t}2$ in \eqref{ftxt} and using the fact that $f_t$ is even, gives 
\begin{equation}\label{ftt2}
f_t(\tfrac{t}2 ) = \tfrac12.
\end{equation}

Further, $g_t$ is defined by $g_t(x) = \sqrt{f_t(x)(1-f_t(x))}$ for $t - \tfrac12 \le x \le \tfrac12,$ and $g_t(x) = 0$ elsewhere. It is easy to check that $g_t(t-x) = g_t(x)$ holds for all $x$ (since $x\to t-x$ leaves its supporting interval 
$[t - \tfrac12, \tfrac12]$ invariant.

\medskip

Note that the Flip invariance of $\mathcal E(t)$ requires that $F_t(U_t) = F_t(U_t^{-1})$, which translates into saying $f_\tau(x) = f_\tau(-x)$, and $G_t(U_t) = G_t(e(t)U_t^{-1})$, i.e. $G_t(x) = G_t(t - x)$ for all $x$. These are easily checked as $G_t$ has period 1 and $x\to t-x$ leaves the interval $[t-\tfrac12,\tfrac12]$ invariant.

\medskip

To give a specific smooth example of $f_t$ (where $t\ge\tfrac12$ is now fixed), choose any $C^\infty$ function $h(x)$ on the closed interval $[-\tfrac12, -\tfrac{t}2]$  such that $h(-\tfrac12) = 0,$ $h(-\tfrac{t}2) = \tfrac12,$ and all derivatives of $h$ vanish at the endpoints $-\tfrac12, -\tfrac{t}2.$ One can then define $f_t$ based on such $h$ by
\[
f_t(x) = 
\begin{cases}
h(x) & -\tfrac12 \le x \le -\tfrac{t}2 \\
1 - h(-x-t) & -\tfrac{t}2 \le x \le \tfrac12 - t \\
\ \ 1 & \tfrac12 - t \le x \le t - \tfrac12  \\
1 - h(x-t) & t - \tfrac12  \le x \le \tfrac{t}2 \\
h(-x) & \tfrac{t}2 \le x \le \tfrac12
\end{cases}
\]
It is easy now to see that $f_t$ is a smooth even function, compactly supported on $[-\tfrac12,\tfrac12],$ and satisfies the condition \eqref{ftx1}.  Of course, the corresponding function $g_t$ will also be $C^\infty$ and compactly supported in the interval indicated by Figure \ref{figfg}. These ensure that the projection $\mathcal E(t)$ is smooth for each $t,$ which we emphasize is defined at $t=\tfrac12$ as well.

\begin{rmk}
It is worthwhile remembering the conditions which ensure that $\mathcal E(t)$ is a projection. Writing $F:=F_t, \ G:=G_t$, those conditions are: both $F,G$ are non-negative, of period 1 (as functions of the real variable $x$), and satisfy
\begin{align*}
F(x) &= F(x)^2 + G(x)^2 + G(x-t)^2 \\ 
G(x)&\big[1 - F(x) - F(x+t)\big] = 0 \\
G(x)&G(x+t) = 0
\end{align*}
for all $x$. These conditions ensure that $\mathcal E(t) = \mathcal E(t)^2 = \mathcal E(t)^*$ is a projection. 
\end{rmk}

The field $\mathcal E(t)$ has trace
\[
\tau(\mathcal E(t)) = \tau(F_t(U_t)) 
= \rmintop_{-\frac12}^{\frac12} f_t(x) dx = t
\]
(in view of Figure \ref{figfg}). In Appendix A (Section 8), the Connes-Chern character of $\mathcal E(t)$ is computed to be
\[
\T(\mathcal E(t)) = (t; \tfrac12, \tfrac12, \tfrac12, \tfrac12)
\]
for $\tfrac12 \le t < 1$.

Now we extend the field $\mathcal E$ to the interval $0 < s < \tfrac12$ by setting
\[
\mathcal F(s) = 1 - \beta_s \mathcal E(1-s)
\]
where $\beta_s: A_{1-s} \to A_s$ is the canonical isomorphism
\[
\beta_s(U_{1-s}) = -U_s, \qquad \beta_s(V_{1-s}) = -V_s^{-1}
\]
which commutes with the canonical Flip so $\mathcal F(s) \in A_s^\Phi$ is also Flip invariant.
We have $\tau(\mathcal F(s)) = s$ and the unbounded topological invariants of $\mathcal F(s)$ can be obtained from the (easy to check) relation
\begin{equation}\label{phibeta}
\phi_{jk}^{1-s} = (-1)^{jk+j+k} \phi_{jk}^{s} \beta_s.
\end{equation}
This gives us the Connes-Chern character of $\mathcal F$
\[
\T(\mathcal F(s)) = (s; \tfrac12, \tfrac12, \tfrac12, \tfrac12)
\]
for $s\in(0,\tfrac12]$ - so it conveniently has the same unbounded traces as $\mathcal E$.

\subsection{The $K_0$ basis}
Recall that the torus $\mathbb T^2$ acts canonically on the rotation algebra by mapping a pair $(a,b) \in \mathbb T^2$ to the automorphism $U \to a U,\ V \to bV$. It is easy to check that the only such toral automorphisms that commute with the Flip are (aside from the identity)
\begin{align*}
\gamma_1(U) &= -U, \qquad \gamma_1(V)=V
\\
\gamma_2(U) &= U, \qquad \ \ \ \gamma_2(V)=-V
\\
\gamma_3(U) &= -U, \qquad \gamma_3(V)=-V
\end{align*}
where $\gamma_3=\gamma_1\gamma_2$.  These give us Flip invariant projection fields $\gamma_1\mathcal E(t), \gamma_2 \mathcal E(t), \gamma_3 \mathcal E(t)$ defined for $t\in[\tfrac12,1)$
as well as fields $\gamma_1\mathcal F(s), \gamma_2 \mathcal F(s), \gamma_3 \mathcal F(s)$ defined for $s\in(0,\tfrac12]$.  It's easy to check the following relations between the unbounded traces and $\gamma_j$
\[
\phi_{jk} \gamma_1 = (-1)^j \phi_{jk}, \qquad 
\phi_{jk} \gamma_2 = (-1)^k \phi_{jk}, \qquad
\phi_{jk} \gamma_3 = (-1)^{j+k} \phi_{jk}.
\]
As a result, we obtain the invariants for the $\mathcal E$-fields
\begin{align*}
\T(\mathcal E(\theta)) & = (\theta; \tfrac12, \tfrac12, \tfrac12, \tfrac12)
\\
\T(\gamma_1\mathcal E(\theta)) &= (\theta; \tfrac12,  \tfrac12, -\tfrac12, -\tfrac12)
\\
\T(\gamma_2 \mathcal E(\theta)) &= (\theta; \tfrac12, - \tfrac12, \tfrac12, - \tfrac12)
\\
\T(\gamma_3 \mathcal E(\theta)) &= (\theta; \tfrac12, - \tfrac12, - \tfrac12, \tfrac12)
\end{align*}
for $\theta \in [\frac12,1)$. Similarly, for the $\mathcal F$-fields one has
\begin{align*}
\T(\mathcal F(\theta)) & = (\theta; \tfrac12, \tfrac12, \tfrac12, \tfrac12)
\\
\T(\gamma_1\mathcal F(\theta)) &= (\theta; \tfrac12,  \tfrac12, -\tfrac12, -\tfrac12)
\\
\T(\gamma_2 \mathcal F(\theta)) &= (\theta; \tfrac12, - \tfrac12, \tfrac12, - \tfrac12)
\\
\T(\gamma_3 \mathcal F(\theta)) &= (\theta; \tfrac12, - \tfrac12, - \tfrac12, \tfrac12) 
\end{align*}
for $\theta \in (0,\frac12]$.

\medskip

We shall let $P_1 = 1$ (the identity projection field), and write
\[
P_3(\theta) = \begin{cases} 
\mathcal F(\theta) &\text{for } 0 < \theta < \tfrac12
\\
 \mathcal E(\theta) &\text{for } \tfrac12 < \theta < 1
\end{cases}, \qquad
P_4(\theta) = \begin{cases} 
\gamma_1 \mathcal F(\theta) &\text{for } 0 < \theta < \tfrac12
\\
\gamma_1 \mathcal E(\theta) &\text{for } \tfrac12 < \theta < 1
\end{cases}
\]
\[
P_5(\theta) = \begin{cases} 
\gamma_2 \mathcal F(\theta) &\text{for } 0 < \theta < \tfrac12
\\
\gamma_2 \mathcal E(\theta) &\text{for } \tfrac12 < \theta < 1
\end{cases}
\qquad
P_6(\theta) = \begin{cases} 
\gamma_3 \mathcal F(\theta) &\text{for } 0 < \theta < \tfrac12
\\
\gamma_3 \mathcal E(\theta) &\text{for } \tfrac12 < \theta < 1.
\end{cases}
\]
We now construct $P_2(\theta)$ as follows. If $0 < \theta < \frac12$, we set
\[
P_2(\theta) = 
\begin{cases} 
\eta \mathcal F(2\theta) &\text{for } 0 < \theta < \tfrac14
\\
 \eta\mathcal E(2\theta) &\text{for } \tfrac14 < \theta < \tfrac12
\end{cases}
\]
where $\eta: A_{2\theta} \to A_\theta$ is the C*-morphism $\eta(U_{2\theta}) = - U_\theta^2, \ \eta(V_{2\theta}) = V_\theta$. From the relations
\begin{equation}\label{phieta}
\phi_{0k}^\theta \eta = \phi_{0k}^{2\theta} - \phi_{1k}^{2\theta}, \qquad
\phi_{1k}^\theta \eta = 0
\end{equation}
one obtains  
\[
\T(P_2(\theta)) = (2\theta; 0, 0, 0, 0).
\]

Now suppose $\frac12 < \theta < 1$. Then $0 < 1-\theta < \frac12$ so that $P_2(1-\theta) \in A_{1-\theta}$ (as in preceding case), which we compose with $\beta_\theta$ to obtain the projection
\[
P_2(\theta) = \beta_\theta P_2(1-\theta) \ \in \ A_\theta,	\qquad (\tfrac12 < \theta < 1)
\]
thus giving
\[
\T(P_2(\theta)) = (2-2\theta; 0, 0, 0, 0)
\]
in view of equations \eqref{phibeta} and the vanishing unbounded traces of $P_2(1-\theta)$.

From the above it is clear that regardless of whether $\theta < \tfrac12$ or $\tfrac12 < \theta$, the projections $P_j(\theta)$ have the same unbounded traces. 

We now claim that the group $K_0(A_\theta^\Phi) = \mathbb Z^6$ has the following projections
\begin{equation}\label{basis}
P_1=1, \quad P_2(\theta), \quad P_3(\theta), \quad P_4(\theta), \quad P_5(\theta), \quad P_6(\theta)
\end{equation}
as basis. As noted above, their Connes-Chern characters are
\begin{align}\label{Kbasis}
\T(P_1) & = (1; 1, 0, 0, 0) \notag
 \\
 \T(P_2) &= 
 \begin{cases} 
(2\theta; 0, 0, 0, 0) &\text{for } 0 < \theta < \tfrac12
\\
(2-2\theta; 0, 0, 0, 0)	 &\text{for } \tfrac12 < \theta < 1
\end{cases}
	\notag
\\
\T(P_3) &= (\theta; \tfrac12, \tfrac12, \tfrac12, \tfrac12)	
\\
\T(P_4) &= (\theta; \tfrac12,  \tfrac12, -\tfrac12, -\tfrac12)		\notag
\\
\T(P_5) &= (\theta; \tfrac12, - \tfrac12, \tfrac12, - \tfrac12)	\notag
\\
\T(P_6) &= (\theta; \tfrac12, - \tfrac12,  -\tfrac12,  \tfrac12).   \notag
\end{align}
It is known from Proposition 3.2 of \ccite{SWa} that a basis for the range of the Connes-Chern character $\T K_0(A_\theta^\Phi)$ consists of the six vectors
\begin{align*}
&(2; 0, 0, 0, 0)  \\
&(1; 1, 0, 0, 0) = \T(1)  \\
&(1; 0, 1, 0, 0) 	\\
&(1; 0, 0, 1, 0)	\\
&(1; 0, 0, 0, 1)	\\
&\begin{cases} 
(\theta; \tfrac12, -\tfrac12, \tfrac12, -\tfrac12) = \T(P_5(\theta)) &\text{for } 0 < \theta < \tfrac12
\\
(\theta; \tfrac12, \tfrac12, -\tfrac12, -\tfrac12) = \T(P_4(\theta)) &\text{for } \tfrac12 < \theta < 1.
\end{cases}
\end{align*}
It can be checked that these six vectors have the same integral span as the vectors in \eqref{Kbasis}. Therefore, the $P_j(\theta)$'s form a basis for $K_0(A_\theta^\Phi)$. 

\medskip

We do not actually use the exact form of the basis projections $P_j(\theta)$ in computing the traces of their cutdowns by the AC projection $e$. Their topological invariants, together with those of $e$, will be sufficient for that purpose. The interesting thing we learn here is the manner by which these invariants interact to giving invariants for cutdowns, as in Theorems \ref{MaintheoremA}, \ref{Kmatrixthm}.

\bigskip

\subsection{Kishimoto's Theorem} 
We now state Kishimoto's Theorem 2.1 \ccite{AK} in a form appropriate for our purposes and specifically for simple AT-algebras of real rank zero (satisfying UCT).

\begin{thm} (Kishimoto \ccite{AK}, Theorem 2.1.)
Let $A$ be a simple AT-C*-algebra $A$ of real rank zero or a separable simple nuclear purely infinite C*-algebra satisfying the Universal Coefficient Theorem. Assume that $K_0(A)$ is finitely generated by classes of projections $g_1, \dots, g_k$ in $A$, and that $K_1(A)$ is finitely generated by classes of unitaries  $u_1, \dots, u_\ell$ in $A$. Then for each $\epsilon > 0$ and each finite subset $F\subset A$, there exists $\updelta > 0$ and a finite subset $G\subset A$ such that for any pair of projections $e_1, e_2$ in $A$ satisfying
\[
\|e_1 x - x e_1\| < \updelta, \qquad \|e_2 x - x e_2\| < \updelta
\]
for $x \in \{g_1, \dots, g_k\} \cup \{u_1, \dots, u_\ell\} \cup G$, and also satisfying
\begin{equation}\label{ege}
[\chi(e_1g_i e_1)] = [\chi(e_2g_i e_2)] \ \ \in \ K_0(A)
\end{equation}
for $i=1,\dots,k$, and
\[
[u_j e_1 + (1- e_1)] = [u_j e_2 + (1-e_2)] \ \ \in \ K_1(A)
\]
for $j=1,\dots,\ell$, there exists a partial isometry $v$ in $A$ such that 
\[
e_1 = v^*v, \qquad e_2 = vv^*, \qquad \|vx - xv\| < \epsilon
\]
for each $x\in F$.
\end{thm}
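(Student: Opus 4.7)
My plan is to follow Kishimoto's original strategy in \ccite{AK}, which localizes the problem to a suitably chosen finite-dimensional subalgebra and then invokes classification of simple AT algebras (respectively, Kirchberg--Phillips in the purely infinite case). First, using real rank zero, I would pick a finite-dimensional C*-subalgebra $B\subset A$ whose image in $K_0(A)$ contains the classes $[g_1],\dots,[g_k]$, whose unitary group represents $[u_1],\dots,[u_\ell]$ up to homotopy, and with every element of $F$ lying within $\epsilon/4$ of $B$. Taking $G$ to be a full system of matrix units for $B$ together with enough auxiliary elements to control the functional calculus $\chi(\cdot)$ and the subsequent norm estimates, and $\updelta$ correspondingly small, the AC hypothesis forces $e_1$ and $e_2$ to nearly commute with all of $B$. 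In particular the cutdowns $\chi(e_i g_j e_i)$ and unitaries $u_j e_i+(1-e_i)$ genuinely live, up to small perturbation, in the corners $e_i A e_i$ and represent well-defined $K$-theory classes there.

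Second, I would transport the hypotheses into matching $K$-theoretic data at the corners: the identity (\ref{ege}) together with its $K_1$-analogue assert that the maps $K_*(B)\to K_*(e_iAe_i)$ induced by $x\mapsto e_ixe_i$ agree under the natural identification $K_*(e_1Ae_1)\cong K_*(e_2Ae_2)$ sending $[e_1]\mapsto[e_2]$. Since each $e_iAe_i$ is a hereditary subalgebra of $A$, it is itself a simple AT algebra of real rank zero (respectively, a Kirchberg algebra satisfying UCT), and Elliott's classification theorem (respectively, Kirchberg--Phillips) yields a $*$-isomorphism $\psi:e_1Ae_1\to e_2Ae_2$ realizing this matching. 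By Brown's theorem on hereditary subalgebras of simple C*-algebras, $\psi$ is spatially implemented by a partial isometry $v_0\in A$ with $v_0^*v_0=e_1$ and $v_0v_0^*=e_2$.

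The main obstacle, and the genuinely delicate step, is to deform $v_0$ into a partial isometry $v$ satisfying $\|vx-xv\|<\epsilon$ for all $x\in F$. By approximate centrality each $x\in F$ decomposes (up to $\epsilon/4$) as $e_i x e_i+(1-e_i) x (1-e_i)$, so the estimate reduces to arranging $v_0(e_1xe_1)v_0^*\approx e_2 x e_2$ uniformly over $F$. The $K$-theoretic matching guarantees this only up to a unitary in $e_2Ae_2$, so I would apply an Elliott-style approximate intertwining: iteratively perturb $v_0$ by small unitaries of the form $w\oplus 1\in e_2Ae_2+\mathbb C(1-e_2)$ whose existence at each stage follows from the classification theorem applied to successively larger finite subsets of $F$. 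Converting this asymptotic construction into a single $\epsilon$-close partial isometry is what pins down the precise relationship between $G$ and $\updelta$, and is the technical heart of Kishimoto's argument.
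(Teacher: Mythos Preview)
The paper does not prove this theorem at all: it is stated in Section~2.4 as background material, attributed to Kishimoto (\ccite{AK}, Theorem~2.1 and Remark~2.9), and then immediately applied to the Flip orbifold $A_\theta^\Phi$. There is therefore no ``paper's own proof'' to compare your proposal against.

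As for your sketch itself, it captures the broad outline of how such results are proved (approximate the corners, match $K$-theoretic data, invoke classification, then deform the implementing partial isometry to be approximately central), but several steps are imprecise. In the AT real rank zero case you cannot in general find a finite-dimensional $B\subset A$ whose unitary group already represents the $K_1$-generators, since $K_1$ of a finite-dimensional algebra is trivial; Kishimoto instead works with an approximating circle algebra and uses its $K_1$. More seriously, the last step---converting the classification-given isomorphism into an \emph{approximately central} partial isometry---is not handled by a single application of Elliott intertwining to the corners: Kishimoto's argument goes through the central sequence algebra and a careful contradiction/compactness argument, and the quantitative dependence of $\updelta$ and $G$ on $\epsilon$ and $F$ is extracted from that, not from an iterative perturbation of $v_0$ as you describe. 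If you actually need a proof here rather than a citation, you should consult \ccite{AK} directly.
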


\medskip

The conditions \eqref{ege} on the generators $g_j$'s imply that $e_1$ and $e_2$ have the same class in $K_0$. In our case, the algebras ($A_\theta$ and $A_\theta^\Phi$) have the cancellation property so that the projections are Murray von-Neumann equivalent and in fact are unitarily equivalent via a unitary in the algebra.
\medskip

In our case, we apply this theorem to the orbifold $A_\theta^\Phi$ which has vanishing $K_1$ (since it is AF) so we are only concerned with the $K_0$ conditions \eqref{ege} in classifying AC projections with respect to AC Murray von-Neumann equivalence.

To this end, we shall use the basis projections \eqref{basis} for $K_0(A_\theta^\Phi),$ and the $K_0$ classes of their cutdown projections which will be convenient to write as
\[
\chi_i := \chi(e P_i(\theta) e)
\]
(for large enough integer parameters in $e,$ of course).  Since the Connes-Chern character $\T$ mentioned in Section 2.1 is injective on $K_0$, it follows that the central equivalence class of $e$ is fully determined by the canonical traces $\tau(\chi_i)$ and the unbounded traces $\phi_{jk}(\chi_i)$ for $i=1,\dots,6$. We find it convenient to organize these numerical invariants for $e$ into a 6-dimensional trace vector 
\[
\vec\tau(e) =  \begin{bmatrix} 
\tau \chi_1 & \tau \chi_2 & \tau \chi_3 & \tau \chi_4 & \tau \chi_5& \tau \chi_6
\end{bmatrix}  
\]
consisting of the canonical traces of the cutdowns, and a topological K-matrix involving the unbounded traces which we lay out as a $4\times6$ matrix
\[
K(e) = 
\begin{bmatrix} 
\phi_{00}(\chi_1) & \phi_{00}(\chi_2) & \phi_{00}(\chi_3) & \phi_{00}(\chi_4) & 
\phi_{00}(\chi_5) & \phi_{00}(\chi_6)
\\
\phi_{01}(\chi_1) & \phi_{01}(\chi_2) & \phi_{01}(\chi_3) & \phi_{01}(\chi_4) & 
\phi_{01}(\chi_5) & \phi_{01}(\chi_6)
\\
\phi_{10}(\chi_1) & \phi_{10}(\chi_2) & \phi_{10}(\chi_3) & \phi_{10}(\chi_4) & 
\phi_{10}(\chi_5) & \phi_{10}(\chi_6)
\\
\phi_{11}(\chi_1) & \phi_{11}(\chi_2) & \phi_{11}(\chi_3) & \phi_{11}(\chi_4) & 
\phi_{11}(\chi_5) & \phi_{11}(\chi_6)
\end{bmatrix}
\]
and we call it the K-matrix of $e$. Its entries all lie in $\tfrac12\mathbb Z$, where the $i$-th column consists of the unbounded traces of $\chi_i$ (arranged from top to bottom in the same order they appear in the character map $\T$). Therefore, in the Flip orbifold, the central equivalence class of $e$ is fully determined by the pair $\vec \tau(e)$ and $K(e)$. 
\medskip

For instance, the K-matrix of the identity is
\begin{equation}\label{Kidentity}
K(1) = \begin{bmatrix}
1 & 0 & \tfrac12 & \tfrac12 & \tfrac12  & \tfrac12
\\
0 & 0 &\tfrac12 & \tfrac12 & -\tfrac12 & -\tfrac12
\\
0  & 0 & \tfrac12 & -\tfrac12 & \tfrac12 & -\tfrac12
\\
0  & 0 & \tfrac12 & -\tfrac12 & -\tfrac12 & \tfrac12
\end{bmatrix}
\end{equation}
(since $\chi_i = P_i(\theta)$) where the columns are just the (unbounded) topological invariants of the basis projections $P_1, \dots, P_6$. Its canonical trace vector is
\[
\vec\tau(1) = 
\begin{cases}
\begin{bmatrix} 1 & 2\theta & \theta & \theta & \theta & \theta \end{bmatrix} &\text{for } 0 < \theta < \tfrac12, 
\\
\begin{bmatrix} 1 & 2-2\theta & \theta & \theta & \theta & \theta \end{bmatrix} &\text{for }
\tfrac12 < \theta < 1.
\end{cases}
\]
\medskip

If $\alpha$ is a smooth automorphism, the K-invariant of the AC projection $\alpha(e)$ is determined by the $K_0$ classes
\[
[\chi(\alpha(e)P\alpha(e))]  
= \alpha_* [\chi(e \alpha^{-1}(P) e)] 
\]
where $P = P_i$ as in \eqref{basis}. These are determined by their canonical traces
\[
\tau [\chi(\alpha(e)P\alpha(e))] = 
\tau \alpha_* [\chi(e \alpha^{-1}(P) e)] = \tau [\chi(e \alpha^{-1}(P) e)]
\]
(since the canonical trace is unique, $\tau \alpha = \tau$), and the unbounded traces
\begin{equation}\label{automP}
\phi_{jk} [\chi(\alpha(e)P\alpha(e))] = \phi_{jk} \alpha_* [\chi(e \alpha^{-1}(P) e)].
\end{equation}

For the unbounded traces, one needs to calculate $\phi_{jk} \alpha$ in terms of $\phi_{jk}$,  and determine the $K_0$ class of $[\alpha^{-1}(P)]$ in terms of the basis.  This would then allow for the calculation of the K-matrix of $\alpha(e)$ in terms of that of $e$. Here is a useful and relevant case, particularly for Theorem \ref{S3imagesofe}.

\medskip

For canonical automorphisms, such as those arising from SL$(2,\mathbb Z)$ (or even $\gamma_1, \gamma_2, \gamma_3$), $\alpha^{-1}$ permutes the basis $[P_i],$ and $\phi_{jk} \alpha$ is a permutation of the $\phi_{jk}$. (The $\gamma_j$'s would change the signs of some $\phi_{jk}$'s.) Therefore, to obtain the K-matrix of $\alpha(e)$ from the K-matrix of $e,$ one
\medskip

(1) permutes the columns of $K(e)$ according to how $\alpha^{-1}$ acts on $[P_i],$ 

(2) permutes the rows of the result in (1) according to how $\alpha$ acts on $\phi_{jk}$.
\medskip

It can be checked, almost by inspection, that applying this procedure to the identity element, where $\alpha$ is the Fourier and Cubic transforms or $\gamma_j$'s, leaves the K-matrix \eqref{Kidentity} of the identity unchanged (as it should).

\subsection{Two Poisson Lemmas} We shall have need for the following two Poisson lemmas for our later computations.

\medskip

\begin{lem}\label{poisson} (Poisson Summation.) Let $f(x)$ be a continuous function on $\mathbb R$ that is compactly supported. Then for each $x$,
\[
\rmsumop_{n=-\infty}^\infty \ft f(n)\ e^{2\pi inx} \ =\ \rmsumop_{n=-\infty}^\infty f(x+n)
\]
where $\ft f(s) = \rmintop_{\mathbb R} f(t) e(-st) dt$ is the Fourier transform of $f$ over $\mathbb R$.
\end{lem}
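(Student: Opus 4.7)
The plan is to prove this classical Poisson Summation Formula by the standard ``periodization and Fourier series'' argument. First I would define the 1-periodic function
\[
F(x) \ := \ \rmsumop_{n\in\mathbb Z} f(x+n).
\]
Since $f$ is continuous and compactly supported, the sum has only finitely many nonzero terms for each $x$ (and locally uniformly in $x$), so $F$ is well-defined, continuous on $\mathbb R$, and has period 1. This is the right-hand side of the asserted identity.

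Next I would compute the Fourier coefficients of $F$ and identify them with the values $\ft f(n)$. Specifically, for each integer $m$,
\[
c_m(F) \ = \ \rmintop_0^1 F(x)\, e(-mx)\, dx
\ = \ \rmsumop_{n\in\mathbb Z} \rmintop_0^1 f(x+n)\, e(-mx)\, dx.
\]
Interchanging sum and integral is legitimate because the sum is finite on the compact interval $[0,1]$. Substituting $y = x+n$ in each term, and noting that $e(-m(y-n)) = e(-my)$ since $m,n$ are integers, the sum telescopes over the translates $[n,n+1]$ into a single integral over $\mathbb R$, yielding
\[
c_m(F) \ = \ \rmintop_{\mathbb R} f(y)\, e(-my)\, dy \ = \ \ft f(m).
\]

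Finally, I would invoke the Fourier expansion $F(x) = \sum_m c_m(F)\, e(mx)$ to obtain the desired identity. The delicate step — and, I expect, the main (only) obstacle — is justifying this pointwise Fourier inversion: mere continuity of $f$ with compact support does not by itself guarantee absolute convergence of $\sum_m \ft f(m)\, e(mx)$, so some additional input is required. In the applications of this lemma throughout the paper the periodized functions (built from the smooth $f_t,g_t$ of Section 2.2) are $C^\infty,$ so $\ft f$ is rapidly decreasing and absolute convergence holds trivially; alternatively one can work with a mild regularity hypothesis (e.g.\ $f$ of bounded variation, or $\ft f \in \ell^1(\mathbb Z)$), or interpret both sides as equal continuous 1-periodic functions having the same Fourier coefficients by uniqueness. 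Under any of these standard frameworks, the preceding coefficient computation closes the argument.
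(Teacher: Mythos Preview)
Your proposal is correct and follows essentially the same approach as the paper: periodize $f$ to a 1-periodic function, compute that its Fourier coefficients are $\ft f(m)$ via the same substitution and telescoping of the integrals over $[n,n+1]$, and then invoke Fourier inversion. In fact you are more careful than the paper, which simply writes ``by Fourier inversion for $h(x)$'' without addressing the convergence issue you flag.
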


\begin{proof}  
The short proof below doesn't require $f$ to have compact support, only that $f$ is integrable and decays at $\pm\infty$ so that $h(x) := \rmsumop_{n=-\infty}^\infty f(x+n)$ is a well-defined integrable periodic function (for instance, for Schwartz functions $f$). The Fourier transform of the 1-periodic function $h$ is
\begin{align*}
\ft h(m) &= \rmintop_{0}^{1}  h(x) e(-mx) dx
= \rmintop_{0}^{1} \rmsumop_{n=-\infty}^\infty f(x+n) e(-mx) dx
= \rmsumop_{n=-\infty}^\infty \rmintop_{0}^{1}  f(x+n) e(-mx) dx
\\
&= \rmsumop_{n=-\infty}^\infty \rmintop_{n}^{n+1}  f(t) e(-mt) dt
=  \rmintop_{-\infty}^{\infty}  f(t) e(-mt) dt = \ft f(m).
\end{align*}
Therefore, by Fourier inversion for $h(x)$ we get
\[
\rmsumop_{n=-\infty}^\infty f(x+n) = h(x) = \rmsumop_{m=-\infty}^\infty \ft h(m)\ e^{2\pi imx}
= \rmsumop_{m=-\infty}^\infty \ft f(m)\ e^{2\pi imx}
\]
as required. 
\end{proof}

\medskip

\begin{lem}\label{poissonparity}
For any Schwartz function $H(x)$ on the real line we have the following forms of the Poisson Summation (for all real $x$),
\[
\rmsumop_{n=-\infty}^\infty \ft H(2n)\ e(nx) \ 
=\ \frac12 \rmsumop_{n=-\infty}^\infty H(\tfrac{x}2+n) + H(\tfrac{x}2+ \tfrac12+n)
\]
\[
\rmsumop_{n=-\infty}^\infty \ft H(2n+1)\ e(nx) \ 
=\ \frac12 e(-\tfrac12x) \rmsumop_{n=-\infty}^\infty H(\tfrac{x}2+n) - H(\tfrac{x}2+ \tfrac12+n)
\]
\end{lem}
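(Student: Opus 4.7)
\textbf{Proof proposal for Lemma \ref{poissonparity}.} The plan is to reduce both identities to the ordinary Poisson summation formula of Lemma \ref{poisson} applied to two auxiliary Schwartz functions built from $H$, and then to split the resulting sums according to the parity of the summation index.

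For the first identity, I would introduce $f(y) := \tfrac12 H(y/2)$. A change of variable $u = y/2$ shows that its Fourier transform is $\ft f(s) = \ft H(2s)$; in particular $\ft f(n) = \ft H(2n)$ for every integer $n$. Applying Lemma \ref{poisson} to $f$ therefore gives
\[
\rmsumop_{n} \ft H(2n)\, e(nx) \;=\; \rmsumop_{n} f(x+n) \;=\; \tfrac12 \rmsumop_{n} H\!\left(\tfrac{x+n}{2}\right).
\]
Splitting the right-hand sum according to whether $n$ is even ($n=2m$) or odd ($n=2m+1$) produces exactly the two periodizations $\rmsumop_m H(\tfrac{x}{2}+m)$ and $\rmsumop_m H(\tfrac{x}{2}+\tfrac12+m)$, proving the first formula.

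For the second identity, the trick is to twist by a half-integer character so as to shift the Fourier transform by $\tfrac12$. Let $g(y) := \tfrac12 H(y/2)\, e(-y/2)$. Then a direct computation, again with $u=y/2$, gives $\ft g(s) = \ft H(2s+1)$, so $\ft g(n) = \ft H(2n+1)$ for all integers $n$. Lemma \ref{poisson} applied to $g$ yields
\[
\rmsumop_{n} \ft H(2n+1)\, e(nx) \;=\; \rmsumop_{n} g(x+n) \;=\; \tfrac12\, e(-\tfrac{x}{2}) \rmsumop_{n} H\!\left(\tfrac{x+n}{2}\right) e(-\tfrac{n}{2}).
\]
Since $e(-n/2) = (-1)^n$, splitting the sum according to the parity of $n$ turns the sum on the right into the desired difference of the two periodizations of $H$, giving the second formula.

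No serious obstacle is expected: the only care required is the scaling in the Fourier transform under $y \mapsto y/2$ (which accounts for the factor $\tfrac12$ in front) and correctly bookkeeping the alternating sign $(-1)^n$ when splitting into even and odd terms. Both auxiliary functions $f$ and $g$ are Schwartz, so Poisson summation of Lemma \ref{poisson} applies without issue.
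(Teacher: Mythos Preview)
Your proof is correct. Both identities follow as you describe: the rescaled functions $f(y)=\tfrac12 H(y/2)$ and $g(y)=\tfrac12 H(y/2)e(-y/2)$ have Fourier transforms $\ft H(2s)$ and $\ft H(2s+1)$ respectively, and applying Lemma \ref{poisson} to each followed by the even/odd split of the summation index gives the stated formulas.

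The paper's argument is organized a little differently. Rather than building auxiliary functions, it applies Poisson summation directly to $H$, then replaces $x$ by $x+\tfrac12$ to introduce the sign $(-1)^n$, and adds or subtracts the two resulting identities to isolate the even- and odd-indexed Fourier coefficients; a final substitution $x\mapsto x/2$ gives the result. Your approach packages the same ingredients into the definition of $f$ and $g$ instead. The two routes are equally elementary and of the same length; yours has the minor advantage that each identity is obtained independently, while the paper's shift-and-combine derivation makes the common origin of the two formulas slightly more visible.
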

\begin{proof} From Poisson Summation for $H$,
\[
\rmsumop_{n=-\infty}^\infty \ft H(n)\ e(nx) \ =\ \rmsumop_{n=-\infty}^\infty H(x+n)
\]
replace $x \to x + \tfrac12$ 
\[
\rmsumop_{n=-\infty}^\infty \ft H(n)\ (-1)^n e(nx)  \ =\ \rmsumop_{n=-\infty}^\infty H(x+ \tfrac12+n)
\]
and add the preceding two equalities to get
\[
\rmsumop_{n=-\infty}^\infty \ft H(n)\ e(nx) [1 + (-1)^n] \ 
=\ \rmsumop_{n=-\infty}^\infty H(x+n) + H(x+ \tfrac12+n)
\]
\[
\rmsumop_{n=-\infty}^\infty \ft H(2n)\ e(2nx) \ 
=\ \frac12 \rmsumop_{n=-\infty}^\infty H(x+n) + H(x+ \tfrac12+n)
\]
replace $x$ by $\frac{x}2$,
\[
\rmsumop_{n=-\infty}^\infty \ft H(2n)\ e(nx) \ 
=\ \frac12 \rmsumop_{n=-\infty}^\infty H(\tfrac{x}2+n) + H(\tfrac{x}2+ \tfrac12+n).
\]
Subtracting we get
\[
\rmsumop_{n=-\infty}^\infty \ft H(n)\ e(nx) [ 1 - (-1)^n] \ 
=\ \rmsumop_{n=-\infty}^\infty H(x+n) - H(x+ \tfrac12+n)
\]
which becomes
\[
\rmsumop_{n=-\infty}^\infty \ft H(2n+1)\ e(2nx) \ 
=\ \frac12 e(-x) \rmsumop_{n=-\infty}^\infty H(x+n) - H(x+ \tfrac12+n)
\]
or
\[
\rmsumop_{n=-\infty}^\infty \ft H(2n+1)\ e(nx) \ 
=\ \frac12 e(-\tfrac{x}2) \rmsumop_{n=-\infty}^\infty H(\tfrac{x}2+n) - H(\tfrac{x}2+ \tfrac12+n)
\]
as desired.
\end{proof}

\textcolor{blue}{\Large \section{The AC Powers-Rieffel Projection}}

In this section we construct the Powers-Rieffel projection and compute its unbounded traces.

\medskip

To build approximately central projections from the continuous field $\mathcal E$ of Section 2.2, we consider, for given integers $q',q,p$ and irrational $\theta$, the natural C*-monomorphism $\zeta = \zeta_{q',q,p,\theta}$ defined by  
\begin{equation}\label{zetamorphism}
\zeta: A_\tau \to A_\theta, \qquad 
\zeta(U_\tau) = U_\theta^{q'} = U^{q'}, \qquad
\zeta(V_\tau) = V_\theta^q = V^q
\end{equation}
where we will write $\tau := q'(q\theta-p)$ for brevity (which shan't be confused with the trace map $\tau$!).  One now uses the field $\mathcal E(t)$ to obtain the Powers-Rieffel projection 
\begin{align}\label{rieffelAC}
e &= \zeta_{q',q,p,\theta} \mathcal E(\tau) \\ 
& = \zeta (G_\tau(U_\tau) V_\tau^{-1} + F_\tau(U_\tau) + V_\tau G_\tau(U_\tau))
\notag
\\
&= G_\tau(U^{q'}) V^{-q} + F_\tau(U^{q'}) + V^q G_\tau(U^{q'})
\end{align}
the K-matrix of which will be computed.

It is not hard to see that $e$ is approximately central in the rotation algebra (e.g., it's easy to see that it approximately commutes with $U$ since $\|V^q U - U V^q\| \to 0$ easily follows).
\medskip

Following \ccite{EL}, we will resurrect this projection as a C*-inner product from a Rieffel equivalence bimodule framework (see equation \eqref{einnerproduct}). This can certainly be done for consecutive pairs of convergents $\tfrac{p}{q} < \theta < \tfrac{p'}{q'},$ where $p'q - pq' = 1$.

\medskip

\begin{rmk}\label{tracecondition} It is known that there are infinitely many pairs of consecutive rational convergents $\tfrac{p}{q} < \theta < \tfrac{p'}{q'}$ such that $q'(q\theta-p)$ is bounded away from 0 and 1. For example, by Lemma 3 of Elliott and Evans \ccite{EE}, for each irrational $\theta$ one can show that $\tfrac15 < q'(q\theta-p) < \tfrac45$ is satisfied for infinitely many such convergent pairs. Therefore, there are infinitely many pairs satisfying one of the inequalities
\[
\tfrac15 < q'(q\theta-p) < \tfrac12  \qquad \text{or} \qquad \tfrac12 < q'(q\theta-p) < \tfrac45.
\]
There is no loss of generality in assuming that the irrational $\theta$ conforms to the latter of these conditions, as we have stipulated in inequality \eqref{qalpha} below.  One can reduce the latter case to the former case as follows. Let's suppose that $\tfrac15 < q'(q\theta-p) < \tfrac12$ for infinitely many convergent pairs. One easily converts this to the former case by looking at the corresponding AC Powers-Rieffel projection of trace given by the complementary quantity
\[
\tfrac12 \ < \ 1- q'(q\theta-p) = q(p' - q'\theta) = q( q'[1-\theta] - (q'-p')) \ < \ \tfrac45. 
\]
\end{rmk}

The following lemma shows how the morphism $\zeta$ relates the unbounded traces of $A_\theta$ and $A_\tau$ in order to compute the topological invariants of the projection $e$ give by \eqref{rieffelAC}.

\begin{lem}\label{phizetas}
With $\zeta: A_\tau \to A_\theta$ the morphism in \eqref{zetamorphism}, we have
\[
\phi_{jk}^\theta \zeta
= \updelta_2^{q'} \updelta_2^{j} \Big[ \phi_{0k}^\tau + \phi_{1k}^\tau \Big] 
+ \updelta_2^{q} \updelta_2^{k} \Big[ \phi_{j0}^\tau + (-1)^j\phi_{j1}^\tau\Big] 
+ (-1)^{pjk} \updelta_2^{q'-1} \updelta_2^{q-1} \phi_{jk}^\tau
\]
where $\tau = q'(q\theta-p)$ and $\phi_{jk}^\tau$ are the unbounded $\Phi$-traces for $A_\tau$.
\end{lem}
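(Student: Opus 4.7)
The plan is to verify the identity by evaluating both linear functionals on the generating unitaries $U_\tau^m V_\tau^n$ of $A_\tau^\infty$; this suffices by linearity. I would first compute $\zeta(U_\tau^m V_\tau^n) = U^{q'm} V^{qn}$ and apply the definition of $\phi_{jk}^\theta$ to get
\[
\phi_{jk}^\theta \zeta(U_\tau^m V_\tau^n) \;=\; e\!\left(-\tfrac{\theta}{2}\, q'q\, mn\right)\, \updelta_2^{q'm-j}\, \updelta_2^{qn-k}.
\]
Using the relation $\theta q'q = \tau + q'p$ that follows directly from $\tau = q'(q\theta - p)$, the phase splits as $e(-\tfrac{\tau}{2} mn)\cdot(-1)^{q'p\,mn}$, isolating a sign factor depending only on parities.

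Next, I would invoke the elementary decomposition
\[
\updelta_2^{q'm-j} \;=\; \updelta_2^{q'}\,\updelta_2^{j} \;+\; \updelta_2^{q'-1}\,\updelta_2^{m-j},
\]
together with its companion for $qn-k$; this simply partitions by the parity of the leading coefficient via the complementary indicators $\updelta_2^{q'}+\updelta_2^{q'-1}=1$. Expanding the product of the two decompositions yields four cross terms. Because $\tfrac{p}{q}$ and $\tfrac{p'}{q'}$ are consecutive convergents one has $p'q-pq'=1$, so $\gcd(q,q')=1$ and $q, q'$ cannot both be even; this kills the $\updelta_2^{q'}\updelta_2^{q}$ term and leaves exactly three cross terms, indexed by which of $q',q$ is even and whether both are odd.

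Then I would match each surviving cross term to one summand of the claimed right-hand side. For the $\updelta_2^{q'}$ term ($q'$ even, hence $q$ odd and $\updelta_2^{q-1}=1$), $q'p$ is even so $(-1)^{q'pmn}=1$, and evaluating $\phi_{0k}^\tau+\phi_{1k}^\tau$ on $U_\tau^m V_\tau^n$ reduces to $e(-\tfrac{\tau}{2}mn)\,\updelta_2^{n-k}$ via $\updelta_2^m+\updelta_2^{m-1}=1$, reproducing the first summand. For the $\updelta_2^{q}$ term ($q$ even, hence $q'$ odd; the relation $p'q-pq'=1$ then forces $p$ odd), $(-1)^{q'pmn}=(-1)^{mn}$, which on the support $m\equiv j\pmod 2$ becomes $(-1)^{jn}$; the identity $\updelta_2^{n}+(-1)^{j}\updelta_2^{n-1}=(-1)^{jn}$ then recovers the second summand. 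For the $\updelta_2^{q'-1}\updelta_2^{q-1}$ term ($q,q'$ both odd), $(-1)^{q'pmn}=(-1)^{pmn}$, which on the support $m\equiv j$, $n\equiv k\pmod 2$ equals $(-1)^{pjk}$, giving the third summand.

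The main obstacle is not conceptual but combinatorial: tracking how the parities of $p, q, q'$ are pinned down by the convergent identity $p'q-pq'=1$, and spotting the compact identity $\updelta_2^n+(-1)^j\updelta_2^{n-1}=(-1)^{jn}$, which explains the asymmetric sign $(-1)^j$ appearing in the second summand of the claim. Once these observations are in place, the verification is a routine expansion of Kronecker deltas.
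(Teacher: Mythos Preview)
Your proposal is correct and follows essentially the same route as the paper: evaluate both sides on $U_\tau^m V_\tau^n$, split the phase via $\theta q'q = \tau + q'p$, apply the delta decomposition $\updelta_2^{am-b} = \updelta_2^{a}\updelta_2^{b} + \updelta_2^{a-1}\updelta_2^{m-b}$, kill the $\updelta_2^{q'}\updelta_2^{q}$ cross term by coprimality, and match the three survivors using the parity constraints imposed by the deltas. The only cosmetic difference is that the paper substitutes $q'm \to j$ in the sign factor at the outset (yielding $(-1)^{jpn}$) rather than carrying $(-1)^{q'pmn}$ through a case analysis, but the manipulations are otherwise identical.
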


\begin{proof} We have 
\begin{align*}
\phi_{jk}^\theta \zeta (U_{\tau}^m V_{\tau}^n) 
&= \phi_{jk}^\theta (U_\theta^{q'm} V_\theta^{qn})
= e(-\tfrac{\theta}2 q'q mn)\,\updelta_2^{q'm-j} \updelta_2^{qn-k}
\\
&= e(-\tfrac{1}2 q'[q\theta - p + p] mn)\,\updelta_2^{q'm-j} \updelta_2^{qn-k}
\\
&= e(-\tfrac{\tau}2 mn) (-1)^{q'pmn} \,\updelta_2^{q'm-j} \updelta_2^{qn-k}
\\
&= e(-\tfrac{\tau}2 mn) (-1)^{jpn} \,\updelta_2^{q'm-j} \updelta_2^{qn-k}
\end{align*}
(since $q'm$ in $(-1)^{q'pmn}$ can be replaced by $j,$ in view of the delta function). Using the identity $\updelta_2^{am-b} = \updelta_2^{a} \updelta_2^{b} + \updelta_2^{a-1}\updelta_2^{m-b},$ we have
\[
\phi_{jk} \zeta (U_{\tau}^m V_{\tau}^n) = 
e(-\tfrac{\tau}2 mn) (-1)^{jpn} 
\Big( \updelta_2^{q'} \updelta_2^{j} + \updelta_2^{q'-1}\updelta_2^{m-j} \Big)
 \Big( \updelta_2^{q} \updelta_2^{k} + \updelta_2^{q-1}\updelta_2^{n-k} \Big)
\]
\[
= e(-\tfrac{\tau}2 mn) (-1)^{jpn}
\Big(
\updelta_2^{q'} \updelta_2^{j}\updelta_2^{q} \updelta_2^{k} 
+ \updelta_2^{q'} \updelta_2^{j} \updelta_2^{q-1}\updelta_2^{n-k} +
\updelta_2^{q'-1}\updelta_2^{m-j}\updelta_2^{q} \updelta_2^{k} + \updelta_2^{q'-1}\updelta_2^{m-j}\updelta_2^{q-1}\updelta_2^{n-k} 
\Big).
\]
The first term $\updelta_2^{q'} \updelta_2^{q} = 0$ vanishes as $q,q'$ are coprime. Also, $\updelta_2^{q'} \updelta_2^{q-1} = \updelta_2^{q'}$ (since if $q'$ is even, $q$ has to be odd, and if $q'$ is odd both vanish), and similarly $\updelta_2^{q'-1} \updelta_2^{q} = \updelta_2^{q}$. Thus we get
\begin{align*}
\phi_{jk} \zeta (U_{\tau}^m V_{\tau}^n) 
&= e(-\tfrac{\tau}2 mn) (-1)^{jpn}
\Big(
 \updelta_2^{q'} \updelta_2^{j} \updelta_2^{n-k} +
\updelta_2^{q} \updelta_2^{m-j} \updelta_2^{k} + 
\updelta_2^{q'-1} \updelta_2^{q-1} \updelta_2^{m-j}\updelta_2^{n-k} 
\Big)
\\
&= e(-\tfrac{\tau}2 mn) 
\Big[
(-1)^{jpn} \updelta_2^{q'} \updelta_2^{j} \updelta_2^{n-k} +
(-1)^{jpn}\updelta_2^{q} \updelta_2^{m-j} \updelta_2^{k} + 
(-1)^{jpn} \updelta_2^{q'-1} \updelta_2^{q-1} \updelta_2^{m-j}\updelta_2^{n-k} 
\Big]
\\
&= e(-\tfrac{\tau}2 mn) 
\Big[
\updelta_2^{q'} \updelta_2^{j} \updelta_2^{n-k} +
(-1)^{jn}\updelta_2^{q} \updelta_2^{m-j} \updelta_2^{k} + 
(-1)^{pjk} \updelta_2^{q'-1} \updelta_2^{q-1} \updelta_2^{m-j}\updelta_2^{n-k} 
\Big]
\end{align*}
(where the middle sign holds since if $q$ is even, $p$ is odd)
\[
= \updelta_2^{q'} \updelta_2^{j} e(-\tfrac{\tau}2 mn) \updelta_2^{n-k} +
\updelta_2^{q} \updelta_2^{k} e(-\tfrac{\tau}2 mn) (-1)^{jn} \updelta_2^{m-j}  + 
(-1)^{pjk} \updelta_2^{q'-1} \updelta_2^{q-1} e(-\tfrac{\tau}2 mn) \updelta_2^{m-j}\updelta_2^{n-k} 
\]
\[
\ \ = \updelta_2^{q'} \updelta_2^{j} \Big[ \phi_{0k}^\tau + \phi_{1k}^\tau\Big](U_{\tau}^m V_{\tau}^n)
+ \updelta_2^{q} \updelta_2^{k} \Big[ \phi_{j0}^\tau + (-1)^j \phi_{j1}^\tau\Big](U_{\tau}^m V_{\tau}^n)  + 
(-1)^{pjk} \updelta_2^{q'-1} \updelta_2^{q-1} \phi_{jk}^\tau(U_{\tau}^m V_{\tau}^n)
\]
therefore we get
\[
\phi_{jk} \zeta
= \updelta_2^{q'} \updelta_2^{j} \Big[ \phi_{0k}^\tau + \phi_{1k}^\tau \Big] 
+ \updelta_2^{q} \updelta_2^{k} \Big[ \phi_{j0}^\tau + (-1)^j\phi_{j1}^\tau\Big] 
+ (-1)^{pjk} \updelta_2^{q'-1} \updelta_2^{q-1} \phi_{jk}^\tau
\]
as claimed.
\end{proof}

In Appendix A (Section 8) we calculated the unbounded traces of the field $\mathcal E(t)$ to be
\[
\phi_{00}(\mathcal E) = \phi_{01}(\mathcal E) = \phi_{10}(\mathcal E) = \phi_{11}(\mathcal E) =  \tfrac12.
\]
Combined with Lemma \ref{phizetas}, we obtain the unbounded traces of our approximately central projection $e = \zeta \mathcal E(\tau)$ to be 
\[
\phi_{jk}^\theta(e) = \phi_{jk}^\tau \zeta(\mathcal E(\tau)) 
= \updelta_2^{q'} \updelta_2^{j}  
+ \tfrac12 \updelta_2^{q} \updelta_2^{k} \Big[ 1 + (-1)^j  \Big] 
+ \tfrac12 (-1)^{pjk} \updelta_2^{q'-1} \updelta_2^{q-1} 
\]
or
\begin{equation}\label{phie}
\phi_{jk}(e) 
= \updelta_2^{q'} \updelta_2^{j}  
+ \updelta_2^{q} \updelta_2^{k} \updelta_2^{j} 
+ \tfrac12 (-1)^{pjk} \updelta_2^{q'-1} \updelta_2^{q-1}.
\end{equation}
Written out, we have
\begin{align}\label{etraces}
\phi_{00}(e) &= \updelta_2^{q'} + \updelta_2^{q} + \tfrac12  \updelta_2^{q'-1} \updelta_2^{q-1}, & \phi_{01}(e) &= \updelta_2^{q'}  + \tfrac12 \updelta_2^{q'-1} \updelta_2^{q-1},	
\\
\phi_{10}(e) &= \tfrac12 \updelta_2^{q'-1} \updelta_2^{q-1}, 
& 
\phi_{11}(e) &= \tfrac12 (-1)^{p} \updelta_2^{q'-1} \updelta_2^{q-1}.	\notag
\end{align}


\textcolor{blue}{\Large \section{The Projection as Rieffel C*-Inner Product}}

In this section our goal is to express the Powers-Rieffel projection $e$ in \eqref{ezetaE} as a C*-inner product by applying Rieffel's equivalence bimodule theorem \ccite{MRb}. Doing so will help facilitate the interaction that $e$ has with any projection $P$ so that the topological invariants of the cutdown $\chi(ePe)$ can be calculated.

\medskip

First, however, we give a quick summary of Rieffel's bimodule background along with needed notation.

\medskip

Let $G=M\times \widehat M$ where $\widehat M$ is the Pontryagin dual group of characters on the locally compact Abelian group $M,$ and $\frak h$ the Heisenberg cocycle on $G$ given by 
\[
\frak h((m,s),(m',s')) = \inner{m}{s'}{}
\]
for $m,m'\in M$ and $s,s'\in \widehat M$. The Heisenberg projective unitary representation $\pi: G \to \mathcal L(L^2(M))$ is given by phase multiplication and translation
\[
[\pi_{(m,s)}f](n) = \langle n,s\rangle f(n+m)
\]
for $f\in L^2(M)$, where $M$ is equipped with its Haar-Plancheral measure (which is unique up to positive scalar multiples). It is projective (with respect to $\frak h$) in the sense that 
\begin{equation}\label{projective}
\pi_x \pi_y = \frak h(x,y) \pi_{x+y}, \qquad \pi_x^* = \frak h(x,x)\pi_{-x}
\end{equation}
for $x,y\in G$. We let $S(M)$ denote Schwartz space of $M$.
\medskip

If $D$ is a discrete lattice subgroup of $G$ (i.e. cocompact), it has the associated twisted group C*-algebra $C^*(D,\frak h)$ of the bounded operators on $L^2(M)$ generated by the unitaries $\pi_x$ for $x\in D$. It is the universal C*-algebra generated by unitaries $\{\pi_x: x\in D\}$ satisfying the projective commutation relations \eqref{projective}. From the latter relation we have
\begin{equation}\label{pixpiy}
\pi_x \pi_y = \frak h(x,y)  \conj{\frak h(y,x)} \pi_y \pi_x
\end{equation}
for $x,y\in D$.  Doing the same for the complementary lattice
\[
D^\perp = \{y\in G: \frak h(x,y)  \conj{\frak h(y,x)}  = 1, \forall x\in D\}
\]
one obtains the C*-algebra $C^*(D^\perp,\conj{\frak h})$ generated by the unitaries $\pi_y^*$ for $y\in D^\perp$ (which also satisfy the preceding commutation relation with $\conj{\frak h}$ in place of $\frak h$).

\medskip

Rieffel's theorem states that the Schwartz space $S(M)$ can be completed to an equivalence (or imprimitivity) $C^*(D,\frak h)$-$C^*(D^\perp,\conj{\frak h})$ bimodule, making these algebras strongly Morita equivalent. 

\medskip

On the C*-algebras $C^*(D,\frak h)$ and $C^*(D^\perp,\conj{\frak h})$ there are canonical Flip automorphisms defined, respectively, by
\[
\Phi(\pi_x) = \pi_{-x}, \qquad \Phi'(\pi_y) = \pi_{-y}
\]
for $x\in D,\, y\in D^\perp$. These can easily be shown to be multiplicative with respect to the C*-inner products in the sense that 
\begin{equation}\label{flipinners}
\Phi \Dinner{f}{g} = \Dinner{\tilde f}{\tilde g}, \qquad 
\Phi' \Dpinner{f}{g} = \Dpinner{\tilde f}{\tilde g}
\end{equation}
where $\tilde f(t) = f(-t)$ and for $f,g \in S(M)$. It is also easy to see that for the left and right module actions one has
\[
\widetilde{af}=  \Phi(a)\tilde f, \qquad \widetilde{hb}=  \tilde h \Phi'(b)
\]
for $a\in C^*(D,\frak h)$ and $b\in C^*(D^\perp,\conj{\frak h})$. The are easy to check by taking, for the first equation, $a = \pi_x, \ x\in D$ (and likewise for the second).

\medskip

We now apply this construction to the locally compact Abelian group $M=\Bbb R \times \Bbb Z_q \times \Bbb Z_{q'}$ and lattice subgroup
\[
D = \mathbb Z\varepsilon_1 + \mathbb Z\varepsilon_2
\]
of $G=M \times M$ generated by the basis vectors
\begin{equation}
\begin{aligned} 
\varepsilon_1 &= (\tfrac\alpha{q}, p, 0 ; \ 0, 0, 0) \\ 
\varepsilon_2 &= (0, \ 0, 1 ; \ 1, 1, 0) 
\end{aligned}
\end{equation}
where $\alpha = q\theta - p$. From $\frak h(\varepsilon_1,\varepsilon_2) = e(\frac\alpha{q} + \frac{p}q) = e(\theta)$, we have associated unitaries generating the irrational rotation algebra:
\[
V = \pi_{\varepsilon_1},\qquad U = \pi_{\varepsilon_2}, \qquad VU = e(\theta)UV
\]
(in view of \eqref{pixpiy}) so that the twisted group C*-algebra $C^*(D,\frak h) \cong A_\theta$, generated by $\pi_{\varepsilon_1},\pi_{\varepsilon_2}$, is just the irrational rotation algebra. The Flip $\Phi,$ as defined above on the unitaries $\pi_x$ agrees with that originally defined: $\Phi(U) = U^{-1}, \,\Phi(V) = V^{-1}$.
\medskip

Recall that the measure of each element of $\Bbb Z_q$ is $1/\sqrt q$, so that its total measure is $\sqrt q$. Since a  fundamental domain of the lattice $D$ in $G$ is
\[
[0,\tfrac\alpha{q})\times \Bbb Z_q \times \Bbb Z_{q'} \times [0,1) \times \Bbb Z_q \times \Bbb Z_{q'}
\]
we obtain the covolume of $D$ in $G$ as the product of measures of each component
\[
|G/D| = \frac\alpha{q} \cdot \sqrt{q} \cdot \sqrt{q'} \cdot 1 \cdot \sqrt{q} \cdot \sqrt{q'} = q'\alpha = q'(q\theta-p) =: \tau
\]
which will be the trace of the Powers-Rieffel projection $e$.  

A straightforward computation gives the complementary lattice of $D$ as
\[
D^\perp = \mathbb Z\updelta_1 + \mathbb Z\updelta_2 + \mathbb Z\updelta_3 
\]
with basis vectors
\begin{equation}
\begin{aligned} 
\updelta_1 &= (\tfrac1{qq'}, p, 0 ;\ 0, 0, p') \\
\updelta_2 &= (0, \ 0, 0 ;\ \tfrac1\alpha, q', 0) \\
\updelta_3 &= (0, 0, 1 ;\  0, 0, 0) \\
\end{aligned}
\end{equation}
as readily checked. (Note that $\pi_{\updelta_j}^* = \pi_{-\updelta_j}$ since $\pi_x^* = \frak h(x,x)\pi_{-x}$ as in our case $\frak h(\updelta_j, \updelta_j) = 1$.) We have associated unitaries 
\[
V_1 = \pi_{-\updelta_1}, \qquad V_2 = \pi_{-\updelta_2}, \qquad 
V_3 = \pi_{-\updelta_3}
\]
satisfying the commutation relations
\begin{equation}\label{TheVs}
V_1V_2 = e(\theta') V_2V_1, \qquad V_3V_1 = e(\tfrac{p'}{q'})V_1V_3, 
\qquad V_2V_3=V_3V_2, \qquad V_3^{q'} = 1.
\end{equation}
They generate the C*-algebra $C^*(D^\perp,\conj{\frak h})$ isomorphic to a $q'\times q'$ matrix algebra over some irrational rotation algebra. The Flip $\Phi'(\pi_y) = \pi_{-y}$ on this algebra is easily seen to be given by
\[
\Phi'(V_1) = V_1^{-1}, \qquad \Phi'(V_2) = V_2^{-1}, \qquad \Phi'(V_3) = V_3^{-1}.
\]
The parameter $\theta'$ in \eqref{TheVs} is calculated using \eqref{pixpiy} 
\[
e(\theta') = \pi_{-\updelta_1} \pi_{-\updelta_2} \pi_{-\updelta_1}^*\pi_{-\updelta_2}^*
= \frak h(\updelta_1, \updelta_2)  \conj{\frak h(\updelta_2, \updelta_1) }
\]
giving us (modulo the integers)
\[
\theta' := \frac1{qq'\alpha} +\frac{pq'}q = \frac1{qq'\alpha} +\frac{p'q-1}q 
\ \equiv_{\mathbb Z} \ 
\frac1{qq'\alpha} - \frac{1}q
=\frac{1-q'\alpha}{qq'\alpha}
=\frac{q\alpha'}{qq'\alpha}
=\frac{\alpha'}{q'\alpha}
\]
since $p'q - pq' = 1$ and $q'\alpha + q\alpha' = 1$, where
\[
\alpha' = p'-q'\theta, \qquad  \alpha = q\theta - p.
\]

\bigskip

We now consider the function (as in \ccite{EL})  
\[
 f(t,r,s) = c \delta_q^r \delta_{q'}^s \sqrt{f_0(t)}, \qquad c^2 = \frac{\sqrt{qq'}}\alpha
\] 
where $c$ is a normalizing constant, $f_0$ is continuous and supported on the interval 
$[-\frac1{2q'},\frac1{2q'}]$, and $f_1 = 1$ on $[\frac1{2q'} - \alpha, \alpha-\frac1{2q'}]$, and 
\[
f_0(t-\alpha) = 1 - f_0(t) \qquad  \text{for } \ \alpha - \frac1{2q'} \le t \le \frac1{2q'}
\]
as shown in Figure \ref{fig}. 
\vskip-10pt

\begin{figure}[H]
\includegraphics[width=2.5in,height=1.5in]
{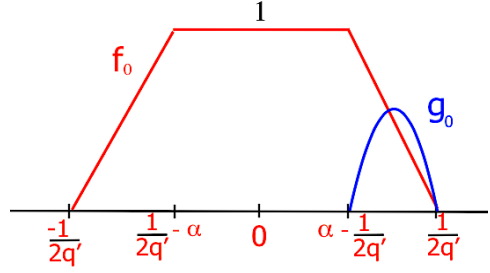} 
\caption{\Small{Graphs of $f_0, g_0$.}}\label{fig}
\end{figure}

According to our Standing Condition 1.3 (in the Introduction), we have
\begin{equation}\label{qalpha}
q\alpha' < \frac12 < q'\alpha = \tau.
\end{equation}
In terms of the function $f_\tau$ defined in Section 2.2, we could in fact take
\begin{equation}\label{fzero}
f_0(t) = f_\tau(q't)
\end{equation}
where $\tau = q'\alpha$.

\bigskip

\subsection{Computation of $\Dpinner{f}{f}$ }

Recall that the $D^\perp$-inner product of $f$ in our setup is
\[
\Dpinner{f}{f} =  
\rmsumop_{n_1, n_2} \ \rmsumop_{n_3=0}^{q'-1} 
\Dpinner{f}{f}(n_1\delta_1 + n_2\delta_2 + n_3\delta_3) 
\pi_{n_1\updelta_1 + n_2\updelta_2 + n_3\updelta_3}^*
\]
where the coefficients will be worked out soon. First, let's find $\pi_{n_1\updelta_1 + n_2\updelta_2 + n_3\updelta_3}^*$.
From $\pi_{u + v} = \conj{\frak h(u, v)} \pi_{u} \pi_{v}$, we get
\begin{align*}
\pi_{n_1\updelta_1 + n_2\updelta_2 + n_3\updelta_3} 
&= \conj{\frak h(n_1\updelta_1, n_2\updelta_2 + n_3\updelta_3)} \pi_{n_1\updelta_1} \pi_{n_2\updelta_2 + n_3\updelta_3}
\\
&= \conj{\frak h(n_1\updelta_1, n_2\updelta_2)} \pi_{\updelta_1}^{n_1} 
\conj{\frak h(n_2\updelta_2, n_3\updelta_3)} \pi_{n_2\updelta_2} \pi_{n_3\updelta_3}
\\
&= e(-n_1 n_2 \theta') \pi_{\updelta_1}^{n_1} \pi_{\updelta_2}^{n_2} \pi_{\updelta_3}^{n_3}
\end{align*}
so
\[
\pi_{n_1\updelta_1 + n_2\updelta_2 + n_3\updelta_3}^*
= e(n_1 n_2 \theta') 
\pi_{\updelta_3}^{-n_3} \pi_{\updelta_2}^{-n_2} \pi_{\updelta_1}^{-n_1} 
= e(n_1 n_2 \theta') V_3^{n_3} V_2^{n_2} V_1^{n_1} 
\]
so the inner product becomes
\[
\Dpinner{f}{f} =  
\rmsumop_{n_1, n_2} \ \rmsumop_{n_3=0}^{q'-1} 
\Dpinner{f}{f}(n_1\delta_1 + n_2\delta_2 + n_3\delta_3) \cdot
e(n_1 n_2 \theta') V_3^{n_3} V_2^{n_2} V_1^{n_1} 
\]
where the coefficients can be worked out as follows:
\begin{align*}
\Dpinner{f}{f}&(n_1\delta_1 + n_2\delta_2 + n_3\delta_3) 
= \Dpinner{f}{f}(\tfrac{n_1}{qq'}, n_1p, n_3 ; \ \ \tfrac{n_2}{\alpha}, n_2 q', n_1 p')
\\
&= \frac1{\sqrt{qq'}} \rmsumop_{r=0}^{q-1} \rmsumop_{s=0}^{q'-1} 
e(\tfrac{n_2q'r}q + \tfrac{n_1p's}{q'}) 
\rmintop_{\Bbb R} \conj{f(t,r,s)} f(t+\tfrac{n_1}{qq'}, r+n_1p, s+n_3) e(t\tfrac{n_2}{\alpha})  dt
\\
&= \frac{c^2}{\sqrt{qq'}} \rmsumop_{r=0}^{q-1} \rmsumop_{s=0}^{q'-1} 
e(\tfrac{n_2q'r}q + \tfrac{n_1p's}{q'}) 
\rmintop_{\mathbb R} \delta_q^r \delta_{q'}^s \sqrt{f_0(t)}
\delta_q^{r+n_1p} \delta_{q'}^{s+n_3} \sqrt{f_0(t+\tfrac{n_1}{qq'})}
  e(t\tfrac{n_2}{\alpha})  dt
\\
&= \frac1{\alpha} \delta_q^{n_1} \delta_{q'}^{n_3}
\rmintop_{\mathbb R} \sqrt{f_0(t) f_0(t+\tfrac{n_1}{qq'})} e(t\tfrac{n_2}{\alpha})  dt.
\end{align*}
From \eqref{fzero} we put $f_0(t) = f_\tau(x)$ where $x = q't$ to get 
\[
\Dpinner{f}{f}(n_1\delta_1 + n_2\delta_2 + n_3\delta_3)  
= \frac1{\tau} \delta_q^{n_1} \delta_{q'}^{n_3}
\rmintop_{\mathbb R} \sqrt{f_\tau(x) f_\tau(x+\tfrac{n_1}{q})} e(x\tfrac{n_2}{\tau})  dx
\]
(as $\tau = q'\alpha$). This gives
\[
\Dpinner{f}{f} =  
\frac1{\tau} \rmsumop_{n_1, n_2} \ \rmsumop_{n_3=0}^{q'-1} 
 \delta_q^{n_1} \delta_{q'}^{n_3}
\rmintop_{\mathbb R} \sqrt{f_\tau(x) f_\tau(x+\tfrac{n_1}{q})} e(x\tfrac{n_2}{\tau})  dx
 \cdot e(n_1 n_2 \theta') V_3^{n_3} V_2^{n_2} V_1^{n_1} 
\]
setting $n_1 = qk$, $n_3 = 0$ (and writing $n_2=m$),
\[
= \frac1{\tau} \rmsumop_{k, m} 
\rmintop_{\mathbb R} \sqrt{f_\tau(x) f_\tau(x+k)} e(x\tfrac{m}{\tau})  dx
 \cdot e(qk m \theta')  V_2^{m} V_1^{qk} 
= \frac1{\tau} \rmsumop_{m} \rmintop_{\mathbb R} f_\tau(x)  e(x\tfrac{m}{\tau})  dx \cdot  V_2^{m}
\]
since the integrand here vanishes for $k\not=0$. The latter integral can be calculated as follows (cf. Figure \ref{figfg} with $t=\tau$)
\begin{align*}
\rmintop_{\mathbb R} f_\tau(x)  e(x\tfrac{m}{\tau})  dx 
&= \rmintop_{-\tfrac12}^{\tfrac12-\tau} f_\tau(x)  e(x\tfrac{m}{\tau})  dx 
+ \rmintop_{\tfrac12-\tau}^{\tau-\tfrac12} f_\tau(x)  e(x\tfrac{m}{\tau})  dx 
+ \rmintop_{\tau-\tfrac12}^{\tfrac12} f_\tau(x)  e(x\tfrac{m}{\tau})  dx 
\\
&= \rmintop_{-\tfrac12}^{\tfrac12-\tau}  f_\tau(x)  e(x\tfrac{m}{\tau})  dx 
+ \rmintop_{\tfrac12-\tau}^{\tau-\tfrac12} e(x\tfrac{m}{\tau})  dx 
+ \rmintop_{-\tfrac12}^{\tfrac12-\tau} f_\tau(x+\tau)  e((x+\tau)\tfrac{m}{\tau})  dx 
\end{align*}
by making the change of variable $x\to x+\tau$ in the third integral. From $f_\tau(x) + f_\tau(x+\tau) = 1$ for $-\tfrac12 \le x \le \tfrac12-\tau,$ we get
\[
\rmintop_{\mathbb R} f_\tau(x)  e(x\tfrac{m}{\tau})  dx
= \rmintop_{-\tfrac12}^{\tfrac12-\tau}e(x\tfrac{m}{\tau})  dx 
+ \rmintop_{\tfrac12-\tau}^{\tau-\tfrac12}  e(x\tfrac{m}{\tau})  dx 
= \rmintop_{-\tfrac12}^{\tau-\tfrac12}e(x\tfrac{m}{\tau})  dx 
= \tau \updelta_{m,0}.
\]
Therefore the $D^\perp$-inner product is $\Dpinner{f}{f} =  \rmsumop_{m}  \updelta_{m,0} \cdot  V_2^{m} = 1.$  This means that the $D$-inner product
\begin{equation}\label{einnerproduct}
 \Dinner{f}{f} = e
\end{equation}
is a projection, which we now proceed to calculate and show to be equal to the Powers-Rieffel projection $e$. 


\subsection{Computation of $\Dinner{f}{f}$ }

Recall that the $D$-inner product of Schwartz functions $f, g$ on $M$ is given by
\[
\inner{f}{g}{D} = |G/D| \rmsumop_{w\in D} \inner{f}{g}{D}(w) \pi_w = 
\tau \rmsumop_{m,n} \inner{f}{g}{D}(m\varepsilon_1+n\varepsilon_2) \  U^n V^m
\]
since $|G/D| = \tau$ and 
\[
\pi_w = \pi_{m\varepsilon_1+n\varepsilon_2} = 
\conj{\frak h(m\varepsilon_1, n\varepsilon_2)} \pi_{m\varepsilon_1} \pi_{n\varepsilon_2} 
= e(-mn\theta)V^m U^n = U^n V^m.
\]
The coefficients are 
\begin{align*}
\Dinner{f}{f}(m\varepsilon_1+n\varepsilon_2) 
&= \Dinner{f}{f}(\tfrac{m\alpha}q, mp, n; \ n, n,0) 
\\ 
&= \rmintop_M f(t,r,s) \conj{f((t,r,s)+(\tfrac{m\alpha}q, mp, n))}\cdot 
\conj{\langle (t,r,s),(n,n,0)\rangle} dt dr ds
\\
&= \rmintop_{\Bbb R \times \Bbb Z_q \times \Bbb Z_{q'}} 
f(t,r,s) \conj{f(t+\tfrac{m\alpha}q, r+mp, s+n)} e(-tn) e(-\tfrac{rn}q) dt \tfrac1{\sqrt q}  \tfrac1{\sqrt {q'}}
\\
&= \frac1{\sqrt{qq'}} \rmsumop_{r=0}^{q-1} \rmsumop_{s=0}^{q'-1} 
e(-\tfrac{rn}q) \rmintop_{\Bbb R} f(t,r,s) \conj{f(t+\tfrac{m\alpha}q, r+mp, s+n)} e(-tn) dt
\\
&= \frac{c^2}{\sqrt{qq'}} \rmsumop_{r=0}^{q-1} \rmsumop_{s=0}^{q'-1} 
e(-\tfrac{rn}q) \rmintop_{\mathbb R} \delta_q^r \delta_{q'}^s  
\delta_q^{r+mp} \delta_{q'}^{s+n} \sqrt{f_0(t) f_0(t+\tfrac{m\alpha}q)} e(-tn) dt
\\
&= \frac1{\alpha} \delta_q^{m} \delta_{q'}^{n}
 \rmintop_{\mathbb R}  \sqrt{f_0(t) f_0(t+\tfrac{m\alpha}q)} e(-tn) dt
\end{align*}
which, again using the change of variable $x=q't$ and $f_0(t) = f_\tau(x),$ gives
\[
\Dinner{f}{f}(m\varepsilon_1+n\varepsilon_2) 
 = \frac1{\tau} \delta_q^{m} \delta_{q'}^{n}
 \rmintop_{\mathbb R}  \sqrt{f_\tau(x) f_\tau(x+m\tfrac{\tau}q)} \ e(-\tfrac{nx}{q'}) dx.
\]

The inner product becomes
\begin{align*}
\inner{f}{f}{D} &=  \tau
\rmsumop_{m,n} \inner{f}{f}{D}(m\varepsilon_1+n\varepsilon_2) \  U^n V^m
= \rmsumop_{m,n} \delta_q^{m} \delta_{q'}^{n}
  \rmintop_{\mathbb R}  \sqrt{f_\tau(x) f_\tau(x+m\tfrac{\tau}q)} e(-\tfrac{nx}{q'}) dx
\  U^n V^m
\\
&=  \rmsumop_{k,\ell} 
  \rmintop_{\mathbb R}\sqrt{f_\tau(x) f_\tau(x+k\tau)} e(-\ell x) dx
\  U^{q'\ell} V^{qk}.
\end{align*}
(by setting $m=qk$ and $n=q'\ell$). It is easy to see that from condition $\frac1{2} < \tau$ (as in \eqref{qalpha}) the integrand here vanishes for $|k| \ge 2$, so the sum over $k$ is concentrated at $k=-1,0,1$, hence the inner product can be written  
\begin{align*}
\inner{f}{f}{D} &=
 \rmsumop_{\ell}  \rmintop_{\mathbb R} \sqrt{f_\tau(x) f_\tau(x-\tau)} e(-\ell x) dx
\  U^{q'\ell} V^{-q}
+  \rmsumop_{\ell} 
  \rmintop_{\mathbb R} f_\tau(x) e(-\ell x) dx \  U^{q'\ell}
\\
&\ \ \ \ +  \rmsumop_{\ell}  \rmintop_{\mathbb R}  \sqrt{f_\tau(x) f_\tau(x+ \tau)} e(-\ell x) dx
\  U^{q'\ell} V^{q}
\end{align*}
or 
\[
\inner{f}{f}{D} = \widetilde G(U^{q'}) V^{-q} +  \widetilde F(U^{q'}) + V^{q} \widetilde G(U^{q'})
\]
where
\[
\widetilde F(z) =  
\rmsumop_{\ell}  \rmintop_{\mathbb R} f_\tau(x) e(-\ell x) dx \cdot z^\ell
=  \rmsumop_{\ell}  \ft f_\tau(\ell) \cdot z^\ell
\]
and
\begin{equation} \label{Gtilde}
\widetilde G(z) =  \rmsumop_{\ell}  \rmintop_{\mathbb R} \sqrt{f_\tau(x) f_\tau(x - \tau)} e(-\ell x) dx \cdot z^\ell.
\end{equation}

In light of the Poisson Lemma \ref{poisson}, we see that $ \widetilde F(z) = F_\tau(z)$ and $\widetilde G(z) = G_\tau(z)$ are the same periodization functions mentioned at the beginning of Section 2.2. Therefore,
\[
\inner{f}{f}{D} = G_\tau(U^{q'}) V^{-q} +  F_\tau(U^{q'}) + V^{q} G_\tau(U^{q'}) \ = \ 
\zeta \mathcal E(\tau) \ = \ e
\]
hence the Powers-Rieffel projection $e$ in \eqref{rieffelAC} is a C*-inner product.

\medskip

\subsection{The Morita Isomorphism}

Now that the projection $e = \Dinner{f}{f}$ is an inner product such that $\Dpinner{f}{f}= 1,$ there is the associated (Morita) isomorphism 
\begin{equation}\label{eta}
\eta: eA_\theta e \longrightarrow C^*(D^\perp,\conj{\frak h}), 
\qquad
\eta(x) = \Dpinner{f}{xf}, 
\qquad \eta^{-1}(y) = \Dinner{f y}{f}
\end{equation}
where we note (and easy to check) that $\eta$ is a homomorphism with respect to the {\it opposite} multiplication on $C^*(D^\perp,\conj{\frak h})$. Further, since the canonical normalized traces $\tau, \tau'$ of $A_\theta$ and $C^*(D^\perp,\conj{\frak h})$ (respectively) are related by $\tau \Dinner gh = \tau(e) \tau' \Dpinner hg,$ one has
\begin{equation}\label{tracesinnerproducts}
\tau(x) = \tau(e) \tau'(\eta(x))
\end{equation}
for $x\in eA_\theta e$. 

Further, this Morita isomorphism intertwines the Flip automorphisms
\begin{equation}\label{etaPhis}
\eta \Phi = \Phi' \eta.
\end{equation}
Indeed, from \eqref{flipinners} for any two Schwartz function $h, k$ we have
\[
\eta \Phi \Dinner{h}{k} = \eta \Dinner{\tilde h}{\tilde k}
= \Dpinner{f}{\Dinner{\tilde h}{\tilde k}f}
\]
which, upon applying the Flip $\Phi'$ (and using $\widetilde{ah}=  \Phi(a)\tilde h$, recalling $\tilde f = f$ is even), gives 
\begin{align*}
\Phi' \eta \Phi \Dinner{h}{k}
&= \Phi' \Dpinner{f}{\Dinner{\tilde h}{\tilde k}f}
= \Dpinner{\tilde f}{ [ \Dinner{\tilde h}{\tilde k}f ]^{\sim} }
\\
&= \Dpinner{\tilde f}{ \Phi(\Dinner{\tilde h}{\tilde k}) \cdot \tilde f  }
= \Dpinner{f}{ \Dinner{h}{k} f  }
\\
&= \eta( \Dinner{h}{k}).
\end{align*}


\textcolor{blue}{\Large \section{Cutdown Approximation}}

In this section we obtain the cutdown approximations 
\begin{equation}\label{cutdownapprox}
\eta(eVe) \approx V_1, \qquad \eta(eUe) \approx V_3
\end{equation}
needed in the next section. (Recall, $a \approx b$ means $\|a - b\| \to 0$ as $q,q'\to\infty$.)

\medskip

Since $V_3$ is unitary of order $q'$ and $V_1$ is a unitary with full spectrum, both satisfying $V_3V_1 = e(\tfrac{p'}{q'})V_1V_3$ (as in \eqref{TheVs}), they generate a C*-subalgebra isomorphic to the  circle algebra $M_{q'}\otimes C(\mathbb T) \cong M_{q'}(C(\mathbb T))$ which, in view of \eqref{cutdownapprox}, approximates the corner algebra $eA_\theta e$. This makes $e$ a circle algebra projection.

\bigskip

From $U = \pi_{\varepsilon_2}$, where $\varepsilon_2 = (0, \ 0, 1 ; \ 1, 1, 0)$, and $f(t,r,s) = c \delta_q^r \delta_{q'}^s \sqrt{f_0(t)}$, we get
\[
(Uf)(t,k,\ell) = (\pi_{\varepsilon_2}f)(t,k,\ell) 
= e(t + \tfrac{k}q) f(t,k,\ell+1)
= c e(t + \tfrac{k}q) \delta_q^{k} \delta_{q'}^{\ell+1} \sqrt{f_0(t)}.
\]
We now calculate the $D^\perp$-inner product coefficient 
\begin{align*}
\Dpinner{f}{Uf}&(n_1\delta_1 + n_2\delta_2 + n_3\delta_3)
= \Dpinner{f}{Uf}(\tfrac{n_1}{qq'}, n_1p, n_3 ; \ \ \tfrac{n_2}{\alpha}, n_2 q', n_1 p')
\\
& = \frac1{\sqrt{qq'}} \rmsumop_{r=0}^{q-1} \rmsumop_{s=0}^{q'-1} 
e(\tfrac{n_2q'r}q + \tfrac{n_1p's}{q'}) 
\rmintop_{\Bbb R} \conj{f(t,r,s)} (Uf)(t+\tfrac{n_1}{qq'}, r+n_1p, s+n_3) e(t\tfrac{n_2}{\alpha})  dt
\\
&= \frac{c}{\sqrt{qq'}} \rmsumop_{r=0}^{q-1} \rmsumop_{s=0}^{q'-1} 
e(\tfrac{n_2q'r}q + \tfrac{n_1p's}{q'}) 
\rmintop_{\Bbb R} \delta_q^r \delta_{q'}^s \sqrt{f_0(t)} 
(Uf)(t+\tfrac{n_1}{qq'}, r+n_1p, s+n_3) 
e(t\tfrac{n_2}{\alpha})  dt
\\
&= \frac{c}{\sqrt{qq'}} 
\rmintop_{\Bbb R}  \sqrt{f_0(t)} (Uf)(t+\tfrac{n_1}{qq'}, n_1p, n_3) 
e(t\tfrac{n_2}{\alpha})  dt
\\
&= \frac{c^2}{\sqrt{qq'}} 
\rmintop_{\Bbb R}  \sqrt{f_0(t)}
e(t + \tfrac{n_1}{qq'} + \tfrac{n_1p}q) \delta_q^{n_1p} \delta_{q'}^{n_3+1} \sqrt{f_0(t+\tfrac{n_1}{qq'})}  e(t\tfrac{n_2}{\alpha})  dt
\end{align*}
since $n_1$ must be divisible by $q$ (if this is nonzero) we can remove $\tfrac{n_1p}q$ 
\[
= \frac{1}{\alpha} \delta_q^{n_1} \delta_{q'}^{n_3+1} e(\tfrac{n_1}{qq'}) 
\rmintop_{\Bbb R}  \sqrt{f_0(t) f_0(t+\tfrac{n_1}{qq'})}  e(t[1+\tfrac{n_2}{\alpha}])  dt
\]
which, in view of $f_0(t) = f_\tau(x)$ where $x=q't$, becomes
\[
= \frac{1}{\tau} \delta_q^{n_1} \delta_{q'}^{n_3+1} e(\tfrac{n_1}{qq'}) 
\rmintop_{\Bbb R}  \sqrt{f_\tau(x) f_\tau(x+\tfrac{n_1}{q})}  e(\tfrac{x}{q'}[1+\tfrac{n_2}{\alpha}])  dx
\]
as $\tau = q'\alpha$ in the first factor. Therefore, the C*-inner product now becomes (as in Section 4.1)
\[
\Dpinner{f}{Uf} =  
\rmsumop_{n_1, n_2} \rmsumop_{n_3=0}^{q'-1} 
\frac{1}{\tau} \delta_q^{n_1} \delta_{q'}^{n_3+1} e(\tfrac{n_1}{qq'}) 
\rmintop_{\Bbb R}  \sqrt{f_\tau(x) f_\tau(x+\tfrac{n_1}{q})}  e(\tfrac{x}{q'}[1+\tfrac{n_2}{\alpha}])  dx
\cdot e(n_1 n_2 \theta') 
V_3^{n_3} V_2^{n_2} V_1^{n_1}  
\]
\[
\ \ \ \ =  
\frac{1}{\tau} V_3^{-1}  \rmsumop_{n_1, n_2}  
\delta_q^{n_1}  e(\tfrac{n_1}{qq'}) 
\rmintop_{\Bbb R}  \sqrt{f_\tau(x) f_\tau(x+\tfrac{n_1}{q})}  e(\tfrac{x}{q'}[1+\tfrac{n_2}{\alpha}])  dx \cdot e(n_1 n_2 \theta') V_2^{n_2} V_1^{n_1}  
\]
now put $n_1=qk$ (and write $n_2=m$)
\[
=  
\frac{1}{\tau} V_3^{-1}  \rmsumop_{k, m}  
  e(\tfrac{k}{q'}) 
\rmintop_{\Bbb R}  \sqrt{f_\tau(x) f_\tau(x+k)}  e(\tfrac{x}{q'}[1+\tfrac{m}{\alpha}])  dx
\cdot e(qk m \theta') V_2^{m} V_1^{qk}.
\]
Since the product $f_\tau(x) f_\tau(x+k)=0$ for $k\not=0$, we get
\[
\Dpinner{f}{Uf} = V_3^{-1}  \cdot \frac{1}{\tau} \rmsumop_{m}  
\rmintop_{\Bbb R}  f_\tau(x) e(\tfrac{x}{q'})  e(\tfrac{mx}{\tau})  dx
\cdot V_2^{m}.
\]
We now use the Poisson Lemma \ref{poisson} to evaluate the sum (where $V_2$ is now replaced by the function $e^{2\pi it},$ for $t\in[0,1],$ since it is a unitary with full spectrum). Making the substitution $x =\tau y,$ we get
\[
C(t) := 
\frac{1}{\tau} \rmsumop_{m}  \rmintop_{\Bbb R}  f_\tau(x) e(\tfrac{x}{q'}) e(\tfrac{mx}{\tau}) dx \cdot e^{2\pi imt} 
= \rmsumop_{m}  \rmintop_{\Bbb R}  f_\tau(\tau y)  e(\tfrac{\tau y}{q'}) e(my) dy \cdot e^{2\pi imt}
\]
or, letting $h(y) = f_\tau(\tau y) e(\tfrac{\tau y}{q'}),$ becomes
\[
C(t) =  \rmsumop_{m}  \rmintop_{\Bbb R}  h(y) e(my) dy \cdot e^{2\pi imt}
= \rmsumop_{m}  \ft h(-m)\, e^{2\pi imt}
= \rmsumop_{m}  \ft h(m)\, e^{-2\pi imt}.
\]
We will show that $C(t)$ converges uniformly in $t$ to 1 for large $q,q',$ where we can take $0\le t \le1$. By Lemma \ref{poisson} we have
\[
C(t) = \rmsumop_{n}  h(n - t)
= \rmsumop_{n}  f_\tau(\tau n - \tau t) e(\tfrac{\tau n - \tau t}{q'}).
\]
Since $f_\tau$ is supported on $[-\tfrac12,\tfrac12],$ the only $n$'s that contribute to this sum are those satisfying $\tau |n - t| < \tfrac12$. Since $\tau > \tfrac12,$ we have $\tfrac12 |n-t| \le \tau |n - t| < \tfrac12$ which gives $-1 < n-t < 1$. Adding this inequality to $0 \le t \le 1$ gives $-1 < n < 2$ so that $n = 0,1$. Using the fact that $f_\tau$ is even, we obtain
\[
C(t) \ = \ f_\tau(\tau t) e(\tfrac{ - \tau t}{q'}) + f_\tau(\tau - \tau t) e(\tfrac{\tau  - \tau t}{q'})
\ \approx \ f_\tau(\tau t) + f_\tau(\tau - \tau t) \ = \ 1
\]
uniformly in $t$ for large $q'$. The last equality here can be seen by looking at the cases $0 \le \tau t \le \tfrac12$ and $\tfrac12 < \tau t \le \tau$ separately. The former case follows from our observation in \eqref{ftx1}, and in the latter case we have $ 0 \le \tau - \tau t < \tau - \tfrac12$ where $f_\tau(\tau - \tau t) = 1$ and $f_\tau(\tau t) = 0$.

This gives us the cutdown approximation
\[
\eta(eUe) =  \Dpinner{f}{Uf} \approx V_3^{-1}.
\]

We next calculate $\Dpinner{f}{Vf}$ where $V = \pi_{\varepsilon_1} 
= \pi_{(\frac\alpha{q}, p, 0 ; \ 0, 0, 0)}$. We have
\[
(Vf)(t,r,s) = f(t+\tfrac\alpha{q}, r+p, s) 
= c \delta_q^{r+p} \delta_{q'}^{s} \sqrt{f_0(t+\tfrac\alpha{q})}.
\]
The C*-inner product coefficients are
\begin{align*}
\Dpinner{f}{Vf}&(n_1\delta_1 + n_2\delta_2 + n_3\delta_3)
\\
&= \frac1{\sqrt{qq'}} \rmsumop_{r=0}^{q-1} \rmsumop_{s=0}^{q'-1} 
e(\tfrac{n_2q'r}q + \tfrac{n_1p's}{q'}) 
\rmintop_{\Bbb R} \conj{f(t,r,s)} Vf(t+\tfrac{n_1}{qq'}, r+n_1p, s+n_3) e(t\tfrac{n_2}{\alpha})  dt
\\
&= \frac{c}{\sqrt{qq'}} \rmsumop_{r=0}^{q-1} \rmsumop_{s=0}^{q'-1} 
e(\tfrac{n_2q'r}q + \tfrac{n_1p's}{q'}) 
\rmintop_{\Bbb R} \delta_q^r \delta_{q'}^s \sqrt{f_0(t)} \cdot Vf(t+\tfrac{n_1}{qq'}, r+n_1p, s+n_3) e(\tfrac{n_2 t}{\alpha})  dt
\intertext{in which both $r, s$ have to be 0}
&= \frac{c}{\sqrt{qq'}} \rmintop_{\Bbb R} \sqrt{f_0(t)} Vf(t+\tfrac{n_1}{qq'}, n_1p, n_3) e(\tfrac{n_2 t}{\alpha})  dt
\\
&= \frac1{\alpha} \delta_q^{n_1+1} \delta_{q'}^{n_3} \rmintop_{\Bbb R}
\sqrt{f_0(t) f_0(t+\tfrac{n_1}{qq'}+\tfrac\alpha{q})} \ e(\tfrac{n_2 t}{\alpha})  dt
\\
&= \frac1{\tau} \delta_q^{n_1+1} \delta_{q'}^{n_3} 
\rmintop_{\Bbb R} \sqrt{f_\tau(x) f_\tau(x+\tfrac{n_1}{q}+\tfrac{\tau}{q})} \ e(\tfrac{n_2 x}{\tau})  dx
\end{align*}
(again using the substitution $x=q't$ and $f_0(t) = f_\tau(x)$), therefore
\[
\Dpinner{f}{Vf} =  \frac1{\tau} 
\rmsumop_{n_1, n_2} \rmsumop_{n_3=0}^{q'-1} 
\delta_q^{n_1+1} \delta_{q'}^{n_3} 
\rmintop_{\Bbb R}  \sqrt{f_\tau(x) f_\tau(x+\tfrac{n_1+\tau}{q})} \ e(\tfrac{n_2 x}{\tau})  dx
\cdot e(n_1 n_2 \theta')\ V_3^{n_3} V_2^{n_2} V_1^{n_1}  
\]
\[
=  \frac1{\tau} 
\rmsumop_{n_1, n_2} \delta_q^{n_1+1}  
\rmintop_{\Bbb R}  \sqrt{f_\tau(x) f_\tau(x+\tfrac{n_1+\tau}{q})} \ e(\tfrac{n_2 x}{\tau})  dx 
  \cdot   V_1^{n_1}  V_2^{n_2}
\]
here we put $n_1 = -1 + qn$ (and $n_2=m$)
\[
=  
V_1^{-1} \frac1{\tau}  \rmsumop_{n, m} 
\rmintop_{\Bbb R}  \sqrt{f_\tau(x) f_\tau(x- \alpha' + n )} \ e(\tfrac{mx}{\tau})  dx 
\cdot  V_1^{nq}  V_2^{m}
\]
since $\tfrac{1-\tau}{q} = \alpha'$. It is easy to see that the integrand here vanishes for $n\not=0,1$. Thus, 
\[
\Dpinner{f}{Vf} = 
V_1^{-1} \frac1{\tau}  \rmsumop_{m} 
\rmintop_{\Bbb R}  \sqrt{f_\tau(x) f_\tau(x- \alpha')} \ e(\tfrac{mx}{\tau})  dx 
\cdot  V_2^{m}
\]
\[ \qquad 
+ V_1^{-1} V_1^{q} \frac1{\tau}  \rmsumop_{m} 
\rmintop_{\Bbb R}  \sqrt{f_\tau(x) f_\tau(x- \alpha' + 1 )} \ e(\tfrac{mx}{\tau})  dx 
\cdot  V_2^{m}
\]
making the substitution $y = x/\tau,$
\[
\Dpinner{f}{Vf} = 
V_1^{-1}  \rmsumop_{m} 
\rmintop_{\Bbb R}  \sqrt{f_\tau(\tau y) f_\tau(\tau y - \alpha')} \ e(my)  dy 
\cdot  V_2^{m}
\]
\[\ \  \qquad \qquad \qquad
+ V_1^{-1} V_1^{q}  \rmsumop_{m} 
\rmintop_{\Bbb R}  \sqrt{f_\tau(\tau y) f_\tau(\tau y - \alpha' + 1 )} \ e(my)  dy 
\cdot  V_2^{m}
\]
\[\ \  \qquad \qquad 
= V_1^{-1}  \rmsumop_{m} \ft h_0(-m) \cdot  V_2^{m} + V_1^{-1} V_1^{q}  \rmsumop_{m} \ft h_1(-m)  \cdot  V_2^{m}
\]
or
\begin{equation}\label{innerfVf}
\Dpinner{f}{Vf} 
= V_1^{-1}  \rmsumop_{m} \ft h_0(m) \cdot  V_2^{-m} + V_1^{-1} V_1^{q}  \rmsumop_{m} \ft h_1(m)  \cdot  V_2^{-m}
\end{equation}
where we have written
\[
h_0(y) = \sqrt{f_\tau(\tau y) f_\tau(\tau y - \alpha')}, \qquad
h_1(y) = \sqrt{f_\tau(\tau y) f_\tau(\tau y - \alpha' + 1)}.
\]
By Lemma \ref{poisson}, the first sum is (as done earlier)
\[
\rmsumop_{m} \ft h_0(m) \ e^{-2\pi i m t} = \rmsumop_{m} h_0(m-t) = h_0(-t) + h_0(1 - t)
\]
since $h_0(m-t)=0$ for $m\not=0,1$ and $0\le t \le1$. (Note $f_\tau(\tau(m-t)) = 0$ for $m\not=0,1$ since $\tfrac12|m-t| \le \tau|m-t| < \tfrac12$ gives $-1 < m - t < 1,$ and adding $0\le t \le1$ gives $-1 < m  < 2$ hence $m=0,1$.)  Since $\alpha' \to 0$ for large $q,q',$ it follows that $h_0(-t) \to f_\tau(\tau t)$ and $h_0(1 - t) \to f_\tau(\tau - \tau t)$ (both uniformly) hence their sum
\[
h_0(-t) + h_0(1 - t) \ \to \ f_\tau(\tau t) + f_\tau(\tau - \tau t) = 1 
\]
by equation \eqref{ftx1} for $f_\tau$. This shows the the first term in \eqref{innerfVf} for $\Dpinner{f}{Vf}$ converges to $V_1^{-1}$ in norm. It remains now to check that the second term converges to 0 for large $q,q'$. For the second term we likewise have only the $m=0,1$ terms
\[
\rmsumop_{m} \ft h_1(m) \ e^{-2\pi i m t} = \rmsumop_{m} h_1(m-t) = h_1(-t) + h_1(1 - t).
\]
The fact that this converges uniformly to 0 follows from $f_\tau(s) f_\tau(s + 1 - \alpha') \to 0$ uniformly in $s$. Since $f_\tau$ is supported on $[-\tfrac12,\tfrac12],$ this product is 0 unless $-\tfrac12 \le s \le -\tfrac12 + \alpha',$ and on this interval (which shrinks to $-\tfrac12$ as $\alpha'\to0$) one has $f_\tau(s) \to f_\tau(-\tfrac12) = 0$. Therefore, we have obtained the norm approximation
\[
\eta(eVe) = \Dpinner{f}{Vf}  \approx V_1^{-1} 
\]
for large $q, q'$.

\textcolor{blue}{\Large \section{K-theory of Powers-Rieffel Projections}}

In this section we prove Theorem \ref{MaintheoremA}.  Theorem \ref{Kmatrixthm} is then proved from it and Lemma \ref{phiV1V3} (which is proved in Section 7).  All norm approximations ``$\approx$" here are understood to hold for large enough integer parameters $q,q'$. 

\medskip

We will denote by $\Xi$ the linear *-anti-automorphism of the continuous field of rotation algebras $\{A_t\}$ defined by
\[
\Xi(U_t^{m} V_t^{n} ) = U_t^{n} V_t^{m}
\]
on the canonical unitary basis. It is a vector space linear transformation satisfying
\[
\Xi(ab) = \Xi(b)\Xi(a), \qquad \Xi(a^*) = \Xi(a)^*
\]
for $a,b \in A_t$. It follows that we also have $\Xi(V_t^{r} U_t^{s} ) = V_t^{s} U_t^{r} $. (We simply write $\Xi$ instead of $\Xi_t$ since it will present no confusion.) Further, $\Xi$ commutes with the Flip 
\[
\Xi \Phi = \Phi \Xi
\]
so it leaves the Flip orbifild invariant.

Let $P(\theta)$ denote any continuous field of Flip-invariant smooth projections.  For example, $P$ could be any of the Powers-Rieffel projection fields forming the basis for $K_0(A_\theta^\Phi) = \mathbb Z^6$ given in \eqref{basis}. For convenience we write $P$ as a  continuous section
\[
P(t) = \rmsumop_{m,n} c_{m, n}(t) U_t^mV_t^{n}
\]
of the continuous field of rotation C*-algebras $\{A_t\},$ where $c_{m,n}(t)$ are rapidly decreasing coefficients; and from its Flip-invariance one has $c_{-m,-n} = c_{m,n}$.

For large $q$, the cut down $e P(\theta) e$ is close to the Flip-invariant projection
\[
\chi(e P(\theta) e) \ \approx \ e P(\theta) e.
\]
where $\chi$ is the characteristic function of the interval $[\tfrac12,\infty]$.
\medskip

Let $A_{p'/q'}$ denote the rational rotation algebra generated by the unitaries $U' = U_{p'/q'}$ and $V' = V_{p'/q'}$ satisfying
\[
V' U' = e(\tfrac{p'}{q'}) U' V'.
\]
Let $\pi$ denote the canonical representation 
\[
\pi: A_{p'/q'} \to C^*(V_1,V_3), \qquad  
\qquad \pi(U') = V_1,\qquad \pi(V') = V_3
\]
which exists since $V_3V_1 = e(\tfrac{p'}{q'})V_1V_3$ from \eqref{TheVs}. 

We will use the well-known result of Elliott (\ccite{GE1984}) that all normalized traces on a rational rotation algebra agree on projections. In particular, $\tau' \pi$ and the canonical trace $\tau_{p'/q'}$ of $A_{p'/q'}$ are equal on projections. The canonical trace of $\chi(e P(\theta) e)$ is therefore obtainable from the approximations $\eta(eUe) \approx V_3^{-1}, \ \eta(eVe) \approx V_1^{-1}$, as follows. Since $P(t)$ is Flip-invariant we can write it as $P(t) = \rmsumop_{m,n} c_{m, n}(t) U_t^{-m}V_t^{-n},$ hence for sufficiently large $q,q',$ we have 
\begin{align*}
\eta(\chi(e P(\theta) e)) 
&\approx \eta(eP(\theta) e)
 =  \eta \rmsumop_{m,n} c_{m,n}(\theta) eU_\theta^{-m}V_\theta^{-n}e 
 \approx  \eta \rmsumop_{m,n} c_{m,n}(\theta) (eU_\theta e)^{-m} (eV_\theta e)^{-n} 
\\
&= \rmsumop_{m,n} c_{m,n}(\theta)  \eta(eV_\theta e)^{-n} \eta (eU_\theta e)^{-m}
\qquad \text{(opposite multiplication)}
\\
&\approx  \rmsumop_{m,n} c_{m,n}(\theta) V_1^{n} V_3^{m}  
\\
&= \pi\ \rmsumop_{m,n} c_{m,n}(\theta) {U'}^n {V'}^m  
\\
&\approx \pi \ \rmsumop_{m,n} c_{m,n}(\tfrac{p'}{q'}) {U'}^n {V'}^m
\\
&\approx \pi \Xi \ \ \rmsumop_{m,n} c_{m,n}(\tfrac{p'}{q'}) {U'}^m {V'}^n
\\
&= \pi \Xi P(\tfrac{p'}{q'}).
\end{align*}
This shows that the projections $\chi(e P(\theta) e)$ and  $\eta^{-1}\pi \Xi P(\tfrac{p'}{q'})$, being close, are therefore unitarily equivalent in the Flip orbifold $A_\theta^\Phi,$ and in particular they have the same canonical and unbounded trace invariants. Thus  
\[
\tau'(\eta(\chi(e P(\theta) e))) = \tau'\pi \Xi P(\tfrac{p'}{q'}) = \tau_{p'/q'}(\Xi P(\tfrac{p'}{q'})) 
= \tau_{p'/q'}(P(\tfrac{p'}{q'})).
\]
where the last equality holds since $\tau_{p'/q'} \Xi$ is a normalized trace on the rational rotation algebra $A_{p'/q'}$ so it agrees with $\tau_{p'/q'}$ on the projections.

Hence from \eqref{tracesinnerproducts}, we get
\begin{equation}\label{canontraceP}
\tau(\chi(e P(\theta) e)) = \tau(e) \tau'(\eta(\chi(e P(\theta) e))) = q'(q\theta-p) \tau_{p'/q'}(P(\tfrac{p'}{q'})).
\end{equation}

\medskip

We now compute the unbounded traces $\phi_{jk}$ of the cutdown projection $\chi(e P(\theta)e)$ (which requires more work). We have
\[
\phi_{jk}(\chi(e P(\theta) e)) = \phi_{jk}( \eta^{-1} \pi \Xi P(\tfrac{p'}{q'}))
= (\phi_{jk} \eta^{-1}) \pi \Xi P(\tfrac{p'}{q'}).
\]
Here, it is easily checked that $\phi_{jk} \eta^{-1}$ is a $\Phi'$-trace when restricted to the C*-algebra generated by $V_1,V_3$ since $\phi_{jk}$ are $\Phi$-traces and using the intertwining relation \eqref{etaPhis}. In Section 9 (see equations \eqref{Phiprimetraces}) we showed that the vector space of $\Phi'$-traces on $C^*(V_1,V_3)$ is 2-dimensional with basis the $\Phi'$-traces 
\[
\psi_{1} ( V_3^{n} V_1^m ) = e(\tfrac{p'mn}{2q'})  \updelta_2^{m},
\qquad
\psi_{2} ( V_3^{n} V_1^m ) = e(\tfrac{p'mn}{2q'}) (-1)^{p'n}\ \updelta_2^{m-q'}.
\]
Therefore, on $C^*(V_1,V_3)$ (the range of $\pi$) there are scalars $a_{jk}^-, a_{jk}^+$ such that
\[
\phi_{jk} \eta^{-1} = a_{jk}^- \psi_1 + a_{jk}^+ \psi_2.
\]
We then have
\[
\phi_{jk}(\chi(e P(\theta) e)) 
= a_{jk}^- \psi_1 \pi \Xi P(\tfrac{p'}{q'}) + a_{jk}^+ \psi_2 \pi \Xi P(\tfrac{p'}{q'}).
\]
The maps $\psi_j \pi \Xi$ ($j=1,2$) are readily found on the basis elements as follows
\begin{align*}
\psi_1 \pi \Xi ({U'}^m {V'}^n) &= \psi_1 \pi({U'}^n {V'}^m)  = \psi_1(V_1^n V_3^m)
= e(-\tfrac{p'mn}{q'}) \psi_1(V_3^m V_1^n)
\\
&= e(-\tfrac{2p'mn}{2q'})  e(\tfrac{p'mn}{2q'})  \updelta_2^{n} 
= e(-\tfrac{p'mn}{2q'})  \updelta_2^{n}
\ =  (\phi_{00} + \phi_{10})({U'}^m {V'}^n)
\end{align*}
hence 
\[
\psi_1 \pi \Xi = \phi_{00}^{} + \phi_{10}^{}
\]
on $A_{p'/q'}$, where here $\phi_{00} \equiv \phi_{00}^{p'/q'}, \phi_{10} \equiv \phi_{10}^{p'/q'}$ -- two of the four basic unbounded traces on the rotation algebra, namely
$\phi_{ij}({U'}^m{V'}^n) = e(-\tfrac{p'mn}{2q'})\,\updelta_2^{m-i} \updelta_2^{n-j}$.
Similarly,
\begin{align*}
\psi_2 \pi \Xi ({U'}^m {V'}^n)  &= \psi_2(V_1^n V_3^m)
= e(-\tfrac{p'mn}{q'}) \psi_2(V_3^mV_1^n)
= e(-\tfrac{p'mn}{2q'}) (-1)^{p'm}\ \updelta_2^{n-q'}
\\
&= (\phi_{0,q'} + (-1)^{p'} \phi_{1,q'}) ({U'}^m {V'}^n)
\end{align*}
since $(-1)^{p'm} = \updelta_2^{m} + (-1)^{p'} \updelta_2^{m-1}$ -- where, of course, $\phi_{0,s}$ is $\phi_{00}$ or $\phi_{01}$ depending on parity of $s$. Therefore, 
\[
\psi_2 \pi \Xi =  \phi_{0,q'} + (-1)^{p'} \phi_{1,q'}.
\]
We have therefore obtained
\begin{equation}\label{phicutdowns}
\phi_{jk}(\chi(e P(\theta) e)) = a_{jk}^- C_0(P) + a_{jk}^+ C_1(P)
\end{equation}
where
\begin{equation}\label{CP}
C_0(P) = \phi_{00}(P) + \phi_{10}(P), \qquad
C_1(P) = \phi_{0,q'}(P) + (-1)^{p'} \phi_{1,q'}(P)
\end{equation}
where $P = P(\tfrac{p'}{q'})$ on the right sides. The invariants $\phi_{jk}P(\tfrac{p'}{q'})$  depend only on the field $P(\theta)$ and do not depend specifically on $\theta, p',q'$ - for instance, this can be seen from unbounded trace values of the basis fields listed in \eqref{Kbasis}. Note however, how $C_1$ depends on the parities of $p'$ and $q'$ in \eqref{CP}.
\medskip

It now remains to find the coefficients $a_{jk}^-, a_{jk}^+$ which depend only on the AC projection $e$. First, evaluate the equation
\[
\phi_{jk} \eta^{-1} = a_{jk}^- \psi_1 + a_{jk}^+ \psi_2
\]
at the identity $\eta(e) = 1$ to get
\begin{equation}\label{stare}
\phi_{jk} (e) = a_{jk}^-   + a_{jk}^+  \updelta_2^{q'}.		
\end{equation}
Next, evaluate it at $V_3,$ where $\psi_{1} ( V_3 ) =  1, \ \psi_{2} ( V_3 ) =  (-1)^{p'} \updelta_2^{q'} = -\updelta_2^{q'}$ (since $p',q'$ are coprime), to get
\begin{equation}\label{star3}
\phi_{jk} \eta^{-1}(V_3) = a_{jk}^-  - a_{jk}^+  \updelta_2^{q'}.	
\end{equation}
Now evaluate it at $V_1$ (noting $\psi_{1} (V_1) = 0,\, \psi_{2} (V_1) =  \updelta_2^{q'-1}$)
\begin{equation}\label{star1}
\phi_{jk} \eta^{-1}(V_1) = a_{jk}^+  \updelta_2^{q'-1}.	
\end{equation}

We now consider the two parity cases for $q'$.

{\bf CASE: $q'$ is even.} Then $p'$ is odd and equations \eqref{stare} and \eqref{star3} become
\[
\phi_{jk} (e) = a_{jk}^-   + a_{jk}^+, \qquad
\phi_{jk} \eta^{-1}(V_3) = a_{jk}^-  - a_{jk}^+
\]
which give
\[
a_{jk}^- = \tfrac12 [ \phi_{jk} (e) + \phi_{jk} \eta^{-1}(V_3) ], \qquad
a_{jk}^+ = \tfrac12 [ \phi_{jk} (e) - \phi_{jk} \eta^{-1}(V_3) ]	\qquad (q' \text{ even}).
\]
From \eqref{phie} we have $\phi_{jk} (e) = \delta_2^j$ ($q'$ even, $q$ odd), and from Lemma \ref{phiV1V3}, and its consequent equation \eqref{phiV3remark} in this case, we have
\[
\phi_{jk} \eta^{-1}(V_3) = \phi_{jk} \inner{fV_3}{f}{D} \ = \  (-1)^{pk} \updelta_2^{j-1} . 
\]
We then get
\[
a_{jk}^- = \tfrac12 [ \delta_2^j + (-1)^{pk} \updelta_2^{j-1}  ], \qquad
a_{jk}^+ = \tfrac12 [ \delta_2^j - (-1)^{pk} \updelta_2^{j-1} ]	      \qquad (q' \text{ even})
\]
which simplify to
\begin{equation}\label{aqprimeeven}
a_{jk}^- = \tfrac12 (-1)^{pjk}, \qquad
a_{jk}^+ = \tfrac12  (-1)^{j+pjk}  		\qquad (q' \text{ even})
\end{equation}
This, together with \eqref{CP}, give us the results indicated in Theorem \ref{MaintheoremA} for the case where $q'$ is even.
\medskip

{\bf CASE: $q'$ is odd.} In this case, equation \eqref{star1} gives 
\[
a_{jk}^+ = \phi_{jk} \eta^{-1}(V_1)
\]
and \eqref{stare} gives 
\begin{equation}\label{aqprimeoddminus}
a_{jk}^- = \phi_{jk} (e)  
= \updelta_2^{q} \updelta_2^{j} \updelta_2^{k}  + \tfrac12 (-1)^{pjk}\updelta_2^{q-1}
\end{equation}
from \eqref{phie} (since $q'$ is odd). By Lemma \ref{phiV1V3}, and consequent equation \eqref{phiV1remark}, we have
\begin{equation}\label{aqprimeoddplus}
a_{jk}^+ = \phi_{jk} \eta^{-1}(V_1) = \phi_{jk} \inner{fV_1}{f}{D}  = \tfrac12 (-1)^{p'j}\, [ \updelta_2^{k-1} + (-1)^{pj} \updelta_2^{q-k-1}  ]
\end{equation}
which are the values given in Theorem \ref{MaintheoremA} in the case that $q'$ is odd. This completes the proof of Theorem \ref{MaintheoremA} (the canonical traces having already been obtained above).

\bigskip

We now proceed to prove Theorem \ref{Kmatrixthm} by calculating the K-matrix of the  projection $e$, which we do for three parity cases. As stated in the Introduction, for simplicity we let $\chi_i := \chi(e P_i(\theta) e)$ denote the cutdown projection of the $i$-th basis generator $P_i$ by $e$. The trace vector of $e$ consists of the traces of these cutdowns
\[
\vec \tau(e) =  \begin{bmatrix} 
\tau \chi_1 & \tau \chi_2 & \tau \chi_3 & \tau \chi_4 & \tau \chi_5& \tau \chi_6
\end{bmatrix}.
\]
In view of \eqref{canontraceP}, they are
\[
\tau\chi_i = q'(q\theta-p) \tau_{p'/q'}(P_i(\tfrac{p'}{q'})).
\]
Inserting the traces of the six fields $P_i$ as indicated in \eqref{Kbasis}, we get the trace vector
\[
\vec \tau(e_q) = 
\begin{bmatrix} 
q'(q\theta-p) & \tau \chi_2 & p'(q\theta-p) & p'(q\theta-p) & p'(q\theta-p) & p'(q\theta-p) 
\end{bmatrix}  
\]
where
\[
\tau \chi_2 =  \begin{cases} 
2p'(q\theta-p) &\text{for } 0 < \theta < \tfrac12
\\
2(q' - p') (q\theta-p)  &\text{for } \tfrac12 < \theta < 1
\end{cases}.
\]
This gives the canonical traces side of the $K$-theory of the AC projection $e$ as stated in Theorem \ref{Kmatrixthm}.

\medskip

In view of equation \eqref{phicutdowns}, it is convenient to write the full K-matrix $K(e) = [\phi_{jk}(\chi_i)]_{jk, i}$ of $e$ (relative to the ordered $K_0$ basis \eqref{basis}) as a matrix product
\[
K(e) = 
\begin{bmatrix} 
\phi_{00}(\chi_1) & \phi_{00}(\chi_2) & \phi_{00}(\chi_3) & \phi_{00}(\chi_4) & 
\phi_{00}(\chi_5) & \phi_{00}(\chi_6)
\\
\phi_{01}(\chi_1) & \phi_{01}(\chi_2) & \phi_{01}(\chi_3) & \phi_{01}(\chi_4) & 
\phi_{01}(\chi_5) & \phi_{01}(\chi_6)
\\
\phi_{10}(\chi_1) & \phi_{10}(\chi_2) & \phi_{10}(\chi_3) & \phi_{10}(\chi_4) & 
\phi_{10}(\chi_5) & \phi_{10}(\chi_6)
\\
\phi_{11}(\chi_1) & \phi_{11}(\chi_2) & \phi_{11}(\chi_3) & \phi_{11}(\chi_4) & 
\phi_{11}(\chi_5) & \phi_{11}(\chi_6)
\end{bmatrix}
\ = \ AC
\]
where
\[
A = \begin{bmatrix} 
a_{00}^- & a_{00}^+  \\ 
a_{01}^- & a_{01}^+  \\ 
a_{10}^- & a_{10}^+  \\ 
a_{11}^- & a_{11}^+ 
 \end{bmatrix}, 		\qquad
C = \begin{bmatrix} 
C_0(P_1) & C_0(P_2) & C_0(P_3) & C_0(P_4) & C_0(P_5) & C_0(P_6) 
\\ 
C_1(P_1) & C_1(P_2) & C_1(P_3) & C_1(P_4) & C_1(P_5) & C_1(P_6) 
\end{bmatrix}.
\]
Of course, $A$ depends only the AC projection $e$, and $C$ is a matrix of topological invariants of the ordered basis (given in \eqref{basis}).
 
\medskip

It is more convenient to consider the following three cases separately: 

(i) $q'$ even,

(ii) $q'$ odd and $q$ even, and 

(iii) $q'$ and $q$ both odd.

\medskip

\noindent{\bf Case (i): $q'$ even.} From \eqref{aqprimeeven} we have $a_{jk}^- = \tfrac12 (-1)^{pjk},\ a_{jk}^+ = \tfrac12  (-1)^{j+pjk},$ so
\[
A = \frac12 \begin{bmatrix} 
1 & 1  \\ 
1 & 1  \\ 
1 & -1  \\ 
(-1)^{p} & (-1)^{p+1} 
 \end{bmatrix}
 \]
 Equations \eqref{CP} in the even $q'$ case (so $p'$ is odd) become
\[
C_0(P) = \phi_{00}(P) + \phi_{10}(P), \qquad
C_1(P) = \phi_{00}(P) - \phi_{10}(P)
\]
which, in view of the unbounded traces in \eqref{Kbasis}, give 
\[
C = \begin{bmatrix} 
1 & 0 & 1 & 0 & 1 & 0 
\\ 
1 & 0 & 0 & 1 & 0 & 1
\end{bmatrix}
\]

Therefore we obtain the K-matrix in the even $q'$ case to be
\[
K(e) = 
\frac12 \begin{bmatrix} 
1 & 1  \\ 
1 & 1  \\ 
1 & -1  \\ 
(-1)^{p} & (-1)^{p+1} 
 \end{bmatrix}
\begin{bmatrix} 
1 & 0 & 1 & 0 & 1 & 0 
\\ 
1 & 0 & 0 & 1 & 0 & 1
\end{bmatrix}
\]
or
\[
\qquad \qquad \qquad K(e) = 
\frac12 
\begin{bmatrix} 
2 & 0 & 1 & 1 & 1 & 1 
\\ 
2 & 0 & 1 & 1 & 1 & 1 
\\ 
0 & 0 & 1 & -1 & 1 & -1
\\ 
0 & 0 & (-1)^p & (-1)^{p+1} & (-1)^p & (-1)^{p+1} 
\end{bmatrix}_{q' \text{ even}}	
\]
(where we have subscripted the matrix with the parity case to which it applies).

\medskip

\noindent{\bf Case (ii): $q$ even.} In this case $p$ and $q'$ are odd and equations \eqref{aqprimeoddminus} and \eqref{aqprimeoddplus} become 
\[
a_{jk}^- = \updelta_2^{j} \updelta_2^{k}, \qquad a_{jk}^+ = \updelta_2^j \updelta_2^{k-1} 
\] 
since
\[
a_{jk}^+ = \tfrac12 (-1)^{p'j}\, [ \updelta_2^{k-1} + (-1)^{pj} \updelta_2^{q-k-1}  ]
= \tfrac12 (-1)^{p'j}\, [ 1 + (-1)^{j}   ] \updelta_2^{k-1} 
=  (-1)^{p'j}\, \updelta_2^j \updelta_2^{k-1} 
=  \updelta_2^j \updelta_2^{k-1}.
\]
This gives
\[
A = \begin{bmatrix} 
1 & 0  \\ 
0 & 1  \\ 
0 & 0  \\ 
0 & 0 
 \end{bmatrix}
 \]
and \eqref{CP} becomes
\[
C_0(P) = \phi_{00}(P) + \phi_{10}(P), \qquad
C_1(P) = \phi_{01}(P) + (-1)^{p'} \phi_{11}(P)
\]
which lead to
\[
C = \begin{bmatrix} 
1 & 0 & 1 & 0 & 1 & 0 
\\ 
0 & 0 & \updelta_2^{p'} & \updelta_2^{p'-1} & -\updelta_2^{p'} & -\updelta_2^{p'-1}
\end{bmatrix}
\]
where we made use of $\tfrac12(1+(-1)^{p'}) = \updelta_2^{p'}$ and 
$\tfrac12(1- (-1)^{p'}) = \updelta_2^{p'-1}$. Therefore,
\[
K(e) = 
\begin{bmatrix} 
1 & 0  \\ 
0 & 1  \\ 
0 & 0  \\ 
0 & 0 
 \end{bmatrix}
 \begin{bmatrix} 
1 & 0 & 1 & 0 & 1 & 0 
\\ 
0 & 0 & \updelta_2^{p'} & \updelta_2^{p'-1} & -\updelta_2^{p'} & -\updelta_2^{p'-1}
\end{bmatrix}
\]
or
\[
K(e) = \begin{bmatrix} 
1 & 0 & 1 & 0 & 1 & 0 
\\ 
0 & 0 & \updelta_2^{p'} & \updelta_2^{p'-1} & -\updelta_2^{p'} & -\updelta_2^{p'-1}
\\ 
0 & 0 & 0 & 0 & 0 & 0
\\ 
0 & 0 & 0 & 0 & 0 & 0
\end{bmatrix}_{q \text{ even}}
\]
\medskip

\noindent{\bf Case (iii): $q',q$ both odd.} Here, equations \eqref{aqprimeoddminus} and \eqref{aqprimeoddplus} give
\[
a_{jk}^-   
=  \tfrac12 (-1)^{pjk}, \qquad 
a_{jk}^+ =  \tfrac12 (-1)^{j + pjk}
\]
since
\[
a_{jk}^+ 
= \tfrac12 (-1)^{p'j}\, [ \updelta_2^{k-1} + (-1)^{pj} \updelta_2^{k}  ]
= \tfrac12 (-1)^{p'j} \, (-1)^{pj(k+1)} 
= \tfrac12 (-1)^{j(p'+p + pk)}
= \tfrac12 (-1)^{j(1 + pk)}
\]
where the last equality holds because one of $p,p'$ will be even and the other odd (from $qp' - q'p = 1,$ where $q,q'$ are both odd). Therefore,
\[
A = \frac12
\begin{bmatrix} 
1 & 1  \\ 
1 & 1  \\ 
1 & -1\\ 
(-1)^p & (-1)^{p-1}
 \end{bmatrix}.
 \]
 The matrix $C$ is the same as in the previous case (ii) (where $q'$ is odd), thus
\[
K(e) = 
\frac12
\begin{bmatrix} 
1 & 1  \\ 
1 & 1  \\ 
1 & -1\\ 
(-1)^p & -(-1)^{p}
 \end{bmatrix}
 \begin{bmatrix} 
1 & 0 & 1 & 0 & 1 & 0 
\\ 
0 & 0 & \updelta_2^{p'} & \updelta_2^{p'-1} & -\updelta_2^{p'} & -\updelta_2^{p'-1}
\end{bmatrix}
\]
\[
= \frac12 \begin{bmatrix} 
1 & 0 & 1+\updelta_2^{p'} & \updelta_2^{p'-1} & 1-\updelta_2^{p'} & -\updelta_2^{p'-1} 
\\ 
1 & 0 & 1+\updelta_2^{p'} & \updelta_2^{p'-1} & 1-\updelta_2^{p'} & -\updelta_2^{p'-1} 
\\ 
1 & 0 & \updelta_2^{p'-1} &  -\updelta_2^{p'-1} & 1+\updelta_2^{p'} & \updelta_2^{p'-1}
\\ 
(-1)^p & 0 & (-1)^p\updelta_2^{p'-1} &  -(-1)^{p}\updelta_2^{p'-1} & (-1)^p(1+\updelta_2^{p'}) & (-1)^p\updelta_2^{p'-1}
\end{bmatrix}_{q',q \text{ both odd}}
\] 
\medskip

Since $q',q$ are both odd, one of $p$ or $p'$ will be even which would simplify the matrix a bit.
(E.g., $(-1)^{p}\updelta_2^{p'-1} = \updelta_2^{p'-1}$ since if $p'$ is odd $p$ must be even. Also, $1 - \updelta_2^{p'} = \updelta_2^{p'-1}$.) With this in mind, the preceding K-matrix becomes
\[
K(e) =  \frac12 \begin{bmatrix} 
1 & 0 & 1+\updelta_2^{p'} & \updelta_2^{p'-1} & \updelta_2^{p'-1} & -\updelta_2^{p'-1} 
\\ 
1 & 0 & 1+\updelta_2^{p'} & \updelta_2^{p'-1} & \updelta_2^{p'-1}& -\updelta_2^{p'-1} 
\\ 
1 & 0 & \updelta_2^{p'-1} &  -\updelta_2^{p'-1} & 1+\updelta_2^{p'} & \updelta_2^{p'-1}
\\ 
(-1)^p & 0 & \updelta_2^{p'-1} &  -\updelta_2^{p'-1} & (-1)^p(1+\updelta_2^{p'}) & \updelta_2^{p'-1}
\end{bmatrix}_{q',q \text{ both odd}}
\]
These all give us the matrices in Theorem \ref{Kmatrixthm} and therefore complete its proof.


\textcolor{blue}{\Large \section{Unbounded Traces of C*-Inner Products}}

In this section we calculate the unbounded traces of the inner products $\inner{fV_1}{f}{D}$ and $\inner{fV_3}{f}{D}$ given by the following lemma. These quantities are needed for the calculations in Section 6 in computing the coefficients $a^-, a^+$.

\bigskip

\begin{lem}\label{phiV1V3} For $ij = 00, 01, 10, 11,$ we have
\[
\phi_{ij} \inner{fV_1}{f}{D}  
=  \frac12 \updelta_2^{j-1} \left[ \updelta_2^{i}+ \updelta_2^{q' - i} (-1)^{p'}  \right] 
+ \frac12 \updelta_2^{q-j-1} \left[ \updelta_2^{i}+ \updelta_2^{q' - i} (-1)^{pq' + p'}  \right]
\]
and
\[
\phi_{ij} \inner{fV_3}{f}{D} \ = \ \frac12 (\updelta_2^{i-1}  + \updelta_2^{q'-1-i}) (\updelta_2^{j} + (-1)^{pi} \updelta_2^{q-j}).
\]
\end{lem}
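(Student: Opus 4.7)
The plan is to compute both $D$-valued inner products by the same template used in Section 4.2 for $\Dinner{f}{f}=e$, and then apply $\phi_{ij}^\theta$ coefficient by coefficient to the resulting expansion.

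First, compute the functions $fV_1$ and $fV_3$ on $M = \mathbb{R}\times\mathbb{Z}_q\times\mathbb{Z}_{q'}$. Since $V_j = \pi_{-\updelta_j}$, the right action of $V_j$ on $f$ is obtained from the Heisenberg representation formula applied to $-\updelta_j$. For $V_3$, with $\updelta_3 = (0,0,1;\,0,0,0)$, this produces only a unit shift of $f$ in the $\mathbb{Z}_{q'}$-coordinate. For $V_1$, with $\updelta_1 = (\tfrac{1}{qq'}, p, 0;\,0,0,p')$, it produces shifts $t\mapsto t-\tfrac{1}{qq'}$ and $r\mapsto r-p$, together with a $p'$-dependent phase character acting on the $s$-coordinate.

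Second, substitute these into the defining formula for the $D$-inner product
\[
\Dinner{fV_j}{f}(m\varepsilon_1+n\varepsilon_2) = \rmintop_M (fV_j)(t,r,s)\,\overline{f(t+\tfrac{m\alpha}{q}, r+mp, s+n)}\,\overline{\langle (t,r,s),(n,n,0)\rangle}\,dt\,dr\,ds
\]
and collapse the sums over $r,s$ using the delta-support of $f$. After the change of variable $x = q't$, $f_0(t) = f_\tau(x)$, each inner product takes the form
\[
\Dinner{fV_j}{f} = \tau\rmsumop_{m,n} c_{m,n}(V_j)\,U^n V^m
\]
where the coefficients $c_{m,n}(V_j)$ carry divisibility constraints on $m$ and $n$ (namely $\delta_q^m\delta_{q'}^{n\pm1}$ for $V_3$, and analogous constraints shifted by $p,p'$ for $V_1$), a multiplicative phase, and an integral of $\sqrt{f_\tau(x)f_\tau(x+\cdot)}$ against an exponential. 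The support property $\tfrac12<\tau$ (Standing Condition) truncates the outer $k = m/q$ index to $\{-1,0,1\}$ exactly as in Section 4.2.

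Third, apply $\phi_{ij}^\theta(U^nV^m) = e(-\tfrac{\theta nm}{2})\,\delta_2^{n-i}\delta_2^{m-j}$ to this expansion term by term. The parity constraints $n\equiv i$, $m\equiv j\pmod 2$ combine with the divisibility constraints from the $c_{m,n}(V_j)$ via the splitting identity
\[
\delta_2^{am-b} = \delta_2^a\delta_2^b + \delta_2^{a-1}\delta_2^{m-b}
\]
used already in the proof of Lemma \ref{phizetas}. This splitting produces the sums $\delta_2^i + (-1)^{p'}\delta_2^{q'-i}$ (and its $(-1)^{pq'+p'}$ variant) in the $V_1$ formula and the factored expression $(\delta_2^{i-1}+\delta_2^{q'-1-i})(\delta_2^j+(-1)^{pi}\delta_2^{q-j})$ in the $V_3$ formula; the $p$- and $p'$-dependent signs come directly from the phases introduced by the $\updelta_1$-action combined with the $\phi_{ij}$-phase $e(-\theta nm/2)$ expanded using $\theta = (p+\alpha)/q$. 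The remaining integrals evaluate via $f_\tau(x)+f_\tau(x+\tau) = 1$ on its defining interval (from Section 2.2), producing the overall factor of $\tfrac12$ in each surviving term, while the other pieces vanish.

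The main obstacle is the careful bookkeeping of phases and parity decompositions: the $p'$-dependent character from the $s$-component of $\updelta_1$ couples to the $\delta_2^{q'-i}$ half of the $V_1$ answer, while the $p$-dependent shift in $r$ together with the $\phi_{ij}$-phase couples to the $\delta_2^{q-j-1}$ half, generating the complementary-parity pairs visible in the stated formulas. Once this bookkeeping is handled, the two displayed formulas of the lemma follow by direct evaluation of the resulting delta-function combinations.
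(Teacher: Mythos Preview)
Your overall template (compute $fV_j$, plug into the $D$-inner product, expand in $U^nV^m$, then hit with $\phi_{ij}$) matches the paper, but two of the steps you sketch would not actually close.

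First, the truncation of the $k$-index to $\{-1,0,1\}$ ``exactly as in Section 4.2'' is correct for $V_3$ but not for $V_1$. For $V_1$ the shift by $-\updelta_1$ produces $\sqrt{f_\tau(x)f_\tau(x+k\tau+\alpha')}$ (with an extra $\alpha'$), and the paper's support analysis gives $k\in\{-2,-1,0,1\}$. The $k=-2$ term contributes $f_\tau(\tfrac{\alpha'}{2}-\tau)$, which can be nonzero when $\tau$ is near $\tfrac12$; it is precisely the pairing $f_\tau(\tfrac{\alpha'}{2})+f_\tau(\tfrac{\alpha'}{2}-\tau)=1$ (via \eqref{ftx1}) of the $k=0$ and $k=-2$ pieces that yields the clean $\tfrac12$ coefficient. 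Dropping $k=-2$ breaks this.

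Second, and more fundamentally, you never say how the infinite $\ell$-sum is evaluated. After applying $\phi_{ij}$ you are left with sums of the form $\sum_\ell \ft H_k(\ell)\,e(\tfrac{x'\ell}{2})\,\updelta_2^{q'\ell-i}$. The splitting identity $\updelta_2^{am-b}=\updelta_2^a\updelta_2^b+\updelta_2^{a-1}\updelta_2^{m-b}$ organizes the parities, but it does not evaluate this infinite Fourier-coefficient sum. The paper's key step here is Poisson summation (Lemma~\ref{poissonparity}): it converts $\sum_n \ft H_k(2n)e(nx')$ and $\sum_n \ft H_k(2n+1)e(nx')$ back into finite sums of point values $H_k(\tfrac{x'}{2}+n)$, which collapse to a single term by compact support. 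Only at that stage do the identities $f_\tau(x)+f_\tau(\tau-x)=1$ and $f_\tau(\tfrac{\tau}{2})=\tfrac12$ apply, to specific evaluated arguments like $\tfrac{k\tau+\alpha'}{2}$. Your plan jumps from ``integrals'' to the identity without this Poisson step, and the integrals themselves do not simplify that way.

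A minor point: the $(-1)^{p'}$ sign does \emph{not} come from the $p'$-dependent character on the $s$-coordinate --- that phase is killed by the factor $\delta_{q'}^s$ in $f$ and the paper drops it immediately. The $p'$-dependence enters later, when one rewrites the $\phi_{ij}$-phase argument $-q'\theta(qk-1)$ as $-kpq'+p'-k\tau-\alpha'$ (using $q'\theta=p'-\alpha'$); it is the parity of the resulting integer $-kpq'+p'$ that survives.
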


\begin{rmk}
In Section 6 the computation for $\phi_{ij} \inner{fV_1}{f}{D}$ is needed for the case that $q'$ is odd (see equations \eqref{aqprimeoddminus} and \eqref{aqprimeoddplus}), so in this case it simplifies to
\begin{equation}\label{phiV1remark}
\phi_{ij} \inner{fV_1}{f}{D}  = \frac12 (-1)^{p'i} \left[ \updelta_2^{j-1} + (-1)^{pi} \updelta_2^{q-j-1}  \right]	.	\qquad (q' \text{ odd})
\end{equation}
We have also used the computation for $\phi_{ij} \inner{fV_3}{f}{D}$ for when $q'$ is even (so $q,p'$ are odd), which simplifies it to
\begin{equation}\label{phiV3remark}
\phi_{ij} \inner{fV_3}{f}{D} 	
 \ = \ \updelta_2^{i-1} (-1)^{pj} 		\qquad (q' \text{ even})
\end{equation}
\end{rmk}

\medskip

\subsection{Computation of $\phi_{ij} \inner{fV_1}{f}{D}$}

We will first need to compute $\eta^{-1}(V_1) = \inner{fV_1}{f}{D}$ where (as in Section 4) 
\[
V_1 = \pi_{-\updelta_1}, \qquad \updelta_1 = (\tfrac1{qq'}, p, 0 ;\ 0, 0, p').
\]
Recalling that $f(t,r,s) = c \delta_q^r \delta_{q'}^s \sqrt{f_0(t)}$, where $c^2 = \frac{\sqrt{qq'}}\alpha,$ we have
\begin{align*}
fV_1(t,r,s) &= f\pi_{(\tfrac{-1}{qq'}, -p, 0 ;\ 0, 0, -p')} (t,r,s) =  e(\tfrac{-p's}{q'})  f(t - \tfrac{1}{qq'}, r-p, s) 
= c e(\tfrac{-p's}{q'})   \delta_q^{r-p} \delta_{q'}^{s} \sqrt{f_0(t - \tfrac{1}{qq'})}.
\end{align*}
From the delta factor $\delta_{q'}^{s}$ the exponential appearing here can be replaced by 1, thus
\[
fV_1(t,r,s) = c \delta_q^{r-p} \delta_{q'}^{s} \sqrt{f_0(t - \tfrac{1}{qq'})}.
\]
We therefore get the $D$-inner product coefficients
\begin{align*}
\Dinner{fV_1}{f}&(m\varepsilon_1+n\varepsilon_2) 
= \frac1{\sqrt{qq'}} \rmsumop_{r=0}^{q-1} \rmsumop_{s=0}^{q'-1} 
e(-\tfrac{rn}q) \rmintop_{\Bbb R} fV_1(t,r,s) \conj{f(t+\tfrac{m\alpha}q, r+mp, s+n)} e(-tn) dt
\\
&= \frac{c^2}{\sqrt{qq'}} \rmsumop_{r=0}^{q-1} \rmsumop_{s=0}^{q'-1} 
e(-\tfrac{rn}q) \delta_q^{r-p} \delta_{q'}^{s} \delta_q^{r+mp} \delta_{q'}^{s+n}
\rmintop_{\Bbb R}   \sqrt{f_0(t - \tfrac{1}{qq'}) f_0(t+\tfrac{m\alpha}q)} e(-tn) dt
\intertext{which, in view of the first two delta functions, we set $r=p$ and $s=0$ }
&= \frac1{\alpha} e(-\tfrac{pn}q) \delta_q^{m+1} \delta_{q'}^{n}  
\rmintop_{\Bbb R} \sqrt{f_0(t - \tfrac{1}{qq'}) f_0(t+\tfrac{m\alpha}q)} e(-tn) dt.
\intertext{Using the equality $f_0(t) = f_\tau(q't)$, by \eqref{fzero}, and making the change of variable $x=q't$, this becomes (using $q'\alpha = \tau$)}
&= \frac1{\tau } e(-\tfrac{pn}q) \delta_q^{m+1} \delta_{q'}^{n}  
\rmintop_{\Bbb R} \sqrt{f_\tau(x - \tfrac{1}{q}) f_\tau(x+\tfrac{m\tau}q)}  e(-\tfrac{nx}{q'})dx.
\end{align*}
This gives the $D$-inner product (noting that $|G/D|=\tau=q'\alpha$)
\begin{align*}
\inner{fV_1}{f}{D} &=  \tau
\rmsumop_{m,n} \inner{fV_1}{f}{D}(m\varepsilon_1+n\varepsilon_2) \  U^n V^m
\\
&= \rmsumop_{m,n} e(-\tfrac{pn}q) \delta_q^{m+1} \delta_{q'}^{n}  
\rmintop_{\Bbb R} \sqrt{f_\tau(x - \tfrac{1}{q}) f_\tau(x+\tfrac{m\tau}q)}  e(-\tfrac{nx}{q'})dx
\cdot U^n V^m.
\intertext{Now we set $m=qk-1$ and $n=q'\ell,$}
&= \rmsumop_{k,\ell} e(-\tfrac{pq'\ell}q)  
\rmintop_{\Bbb R} \sqrt{f_\tau(x - \tfrac{1}{q}) f_\tau(x+ k\tau - \tfrac{\tau}q)} 
e(-\ell x) dx \cdot  U^{q'\ell} V^{qk-1}.
\end{align*}
Making the translation $x \to x+\tfrac{1}{q}$ in the integral gives
\[
= \rmsumop_{k,\ell} e(-\tfrac{pq'\ell}q)  
\rmintop_{\Bbb R} \sqrt{f_\tau(x) f_\tau(x + k\tau + \tfrac{1-\tau}q)} 
e(-\ell [x+\tfrac{1}{q}]) dx \cdot  U^{q'\ell} V^{qk-1}
\]
\[
= \rmsumop_{k,\ell} e(-\tfrac{pq'\ell}q)  e(- \tfrac{\ell}{q}) 
\rmintop_{\Bbb R} \sqrt{f_\tau(x) f_\tau(x + k\tau + \tfrac{1-\tau}q)} 
e(-\ell x) dx \cdot  U^{q'\ell} V^{qk-1}.
\]
From $p'q-pq'=1$ we have $e(-\tfrac{pq'\ell}q)  e(- \tfrac{\ell}{q}) = e(-\tfrac{(pq'+1)\ell}q) = e(-\tfrac{p'q\ell}q) = e(-p'\ell) = 1$, and from $1 = q'\alpha + q\alpha' = \tau + q\alpha'$ we have $\tfrac{1-\tau}q = \alpha',$ hence
\[
= \rmsumop_{k,\ell} 
\rmintop_{\Bbb R} \sqrt{f_\tau(x) f_\tau(x + k\tau + \alpha')} 
e(-\ell x) dx \cdot  U^{q'\ell} V^{qk-1}.
\]
Since the function $f_\tau$ is supported on the interval $-\tfrac12 < x < \tfrac12,$ we must also have $-\tfrac12 < x + k\tau + \alpha' < \tfrac12$ (otherwise the integrand vanishes). From these inequalities, we have $k\tau < k\tau + \alpha' < \tfrac12 - x < 1$ which implies $k < \frac1\tau < 2$ since $\tau > \tfrac12$ (in view of Standing Condition 1.3). Further, these inequalities also imply $k\tau + \alpha' > -\tfrac12 - x > -1,$ so $ k\tau > -1 - \alpha'$. But as $\alpha' < \tfrac1{2q}$ (since from $\tau > \tfrac12$ and $1 = \tau + q\alpha'$ one has $q\alpha' < \tfrac12$), we have $k\tau  > -1 - \tfrac1{2q} \ge -\tfrac3{2}$ for $q \ge1$ and hence $k > -\tfrac3{2\tau} > -3$ (from $\tau > \tfrac12$). This shows that the preceding sum runs only over $k = -2, -1, 0, 1$:
\[
\inner{fV_1}{f}{D} = \rmsumop_{k=-2}^1 \rmsumop_{\ell} 
\rmintop_{\Bbb R} H_k(x) e(-\ell x) dx \cdot  U^{q'\ell} V^{qk-1}
= \rmsumop_{k=-2}^1 \rmsumop_{\ell} \ft H_k(\ell) \cdot  U^{q'\ell} V^{qk-1}.
\]
where we have written $H_k(x) = \sqrt{f_\tau(x) f_\tau(x + k\tau + \alpha')}$ for simplicity and used its Fourier transform. 
Applying the unbounded trace $\phi_{ij}(U^mV^n)\ =\ e(-\tfrac{\theta}2 mn)\,\updelta_2^{m-i} \updelta_2^{n-j}$ we get
\begin{align*}
\phi_{ij} \inner{fV_1}{f}{D} 
&= \rmsumop_{k=-2}^1 \rmsumop_{\ell} \ft H_k(\ell)\,  \phi_{ij}(U^{q'\ell} V^{qk-1})
= \rmsumop_{k=-2}^1 \updelta_2^{qk-1-j} 
\rmsumop_{\ell} \ft H_k(\ell) \cdot  e(-\tfrac{\theta}2 q'\ell(qk-1))\,\updelta_2^{q'\ell - i} 
\\
&  = \  \updelta_2^{j-1} (\Omega_0 + \Omega_{-2}) + \updelta_2^{q-1-j} (\Omega_{-1} + \Omega_1)
\end{align*}
where
\[
\Omega_k = \rmsumop_{\ell} \ft H_k(\ell) \cdot  e(-\tfrac{\theta}2 q'\ell(qk-1))\,\updelta_2^{q'\ell - i}.
\]
Writing $\Omega_k$ with respect to even and odd $\ell = 2n, 2n+1,$ gives
\begin{align*}
\Omega_k &= \updelta_2^{i}  \rmsumop_{n} \ft H_k(2n) \cdot  e(-q' \theta(qk-1)n)
 + \updelta_2^{q' - i} e(-\tfrac{\theta}2 q'(qk-1)) 
\rmsumop_{n} \ft H_k(2n+1) \cdot  e(-q' \theta(qk-1)n) 
\\
& = \updelta_2^{i}  \rmsumop_{n} \ft H_k(2n) \cdot  e(nx')
+ \updelta_2^{q' - i} e(\tfrac{1}2 x' ) \rmsumop_{n} \ft H_k(2n+1) \cdot  e(nx') 
\intertext{where $x' = -q' \theta(qk-1)$. By Lemma \ref{poissonparity} this becomes}
& = \frac12 \updelta_2^{i} \rmsumop_n  H_k(\tfrac{x'}2+n) + H_k(\tfrac{x'}2+ \tfrac12+n)
 + \ \frac12 \updelta_2^{q' - i} \rmsumop_n H(\tfrac{x'}2+n) - H(\tfrac{x'}2+ \tfrac12+n)
\end{align*}
so that 
\[
\Omega_k = \frac12 (\updelta_2^{i} + \updelta_2^{q' - i}) \ \rmsumop_n  H_k(\tfrac{x'}2+n) 
+  \frac12 (\updelta_2^{i} - \updelta_2^{q' - i}) \rmsumop_n  H_k(\tfrac{x'}2+ \tfrac12+n).
\]
We work out the two sums in $\Omega_k$ by working out the sum
\[
\rmsumop_n  H_k(\tfrac{x'}2+\tfrac{\epsilon}2 +n) 
= \rmsumop_n  \sqrt{f_\tau(\tfrac{x'}2+\tfrac{\epsilon}2+n) 
f_\tau(\tfrac{x'}2 + \tfrac{\epsilon}2 + n + k\tau + \alpha')} 
\]
where $\epsilon = 0,1$. (This Gossamer of a monster is mostly fur!) It is convenient to write $x'$ (using $q \alpha' = 1 - q' \alpha = 1-\tau$) as follows
\[
x' \ = \ -q' \theta (qk-1) \ = \ -kpq' + p' - k\tau -  \alpha' 
\]
which is easily checked.

\bigskip

The function values $f_\tau(\tfrac{x'}2+\tfrac{\epsilon}2+n)$ and 
$f_\tau(\tfrac{x'}2 + \tfrac{\epsilon}2 + n + k\tau + \alpha')$ are nonzero when both arguments lie in $(-\tfrac12,\tfrac12),$ i.e. when the inequalities 
\[
-1 < x' + \epsilon + 2n < 1, \qquad 
-1< x' + \epsilon + 2n + 2k\tau  + 2\alpha' < 1
\]
hold. Using the above form for $x'$, these inequalities become
\[
-1 <  -kpq' + p' - k\tau -  \alpha' + \epsilon + 2n < 1,
\]
\[
-1< -kpq' + p' - k\tau -  \alpha'  + \epsilon + 2n + 2k\tau + 2\alpha' < 1.
\]
Adding these inequalities gives $-2 < 2[ -kpq' + p'  + \epsilon + 2n ] < 2,$ or
\[
-1 <  -kpq' + p'  + \epsilon + 2n < 1.
\]
Since middle number is an integer we get $-kpq' + p'  + \epsilon + 2n = 0$. For such integer $n$ to exist, the integer $-kpq' + p' + \epsilon$ must be even and the following sum involves only the term with $n = \tfrac12(kpq' - p'  - \epsilon),$ thus we have
\begin{align*}
\rmsumop_n  H_k(\tfrac{x'}2+\tfrac{\epsilon}2 +n)
& = \delta_2^{-kpq' + p' + \epsilon} 
H_k(\tfrac{x'}2+\tfrac{\epsilon}2 + \tfrac12 kpq' - \tfrac12p'  - \tfrac12\epsilon)
\\
&= \delta_2^{kpq' + p' + \epsilon} 
H_k(\tfrac{-kpq' + p' - k\tau -  \alpha'}2 + \tfrac12 kpq' - \tfrac12p')
\\
&= \delta_2^{kpq' + p' + \epsilon} f_\tau(\tfrac{k\tau + \alpha'}2)
\end{align*}
using fact that $f_\tau$ is even in the last equality. This yields
\begin{align*}
\Omega_k &= \frac12 (\updelta_2^{i} + \updelta_2^{q' - i}) \ \delta_2^{kpq' + p'} f_\tau(\tfrac{k\tau + \alpha'}2) 
+  \frac12 (\updelta_2^{i} - \updelta_2^{q' - i}) \delta_2^{kpq' + p' + 1} f_\tau(\tfrac{k\tau + \alpha'}2)
\\
& = \frac12 \left[ (\updelta_2^{i} + \updelta_2^{q' - i}) \ \delta_2^{kpq' + p'}  
+  (\updelta_2^{i} - \updelta_2^{q' - i}) \delta_2^{kpq' + p' + 1} 
\right] f_\tau(\tfrac{k\tau + \alpha'}2)
\\
& = \frac12 \left[ \updelta_2^{i}\delta_2^{kpq' + p'} + \updelta_2^{q' - i} \delta_2^{kpq' + p'}  
+  \updelta_2^{i} \delta_2^{kpq' + p' + 1} - \updelta_2^{q' - i} \delta_2^{kpq' + p' + 1} 
\right] f_\tau(\tfrac{k\tau + \alpha'}2)
\\
& = \frac12 \left[ \updelta_2^{i}+ \updelta_2^{q' - i} ( \delta_2^{kpq' + p'}  
 -  \delta_2^{kpq' + p' + 1} ) \right] f_\tau(\tfrac{k\tau + \alpha'}2)
\end{align*}
therefore
\[
\Omega_k = \frac12 \left[ \updelta_2^{i}+ \updelta_2^{q' - i} (-1)^{kpq' + p'}  \right] f_\tau(\tfrac{k\tau + \alpha'}2).
\]
Setting $k=0$ and $k=-2$ gives
\[
\Omega_0 = \frac12 \left[ \updelta_2^{i}+ \updelta_2^{q' - i} (-1)^{p'}  \right] f_\tau(\tfrac{\alpha'}2), \qquad
\Omega_{-2} = \frac12 \left[ \updelta_2^{i}+ \updelta_2^{q' - i} (-1)^{p'}  \right] f_\tau(\tfrac{\alpha'}2 - \tau)
\]
which sum to
\[
\Omega_{0} + \Omega_{-2} = \frac12 \left[ \updelta_2^{i}+ \updelta_2^{q' - i} (-1)^{p'}  \right]
\]
since $f_\tau(\tfrac{\alpha'}2) + f_\tau(\tfrac{\alpha'}2 - \tau) = 1$ (which follows from \eqref{ftx1} by taking $x = \tfrac{\alpha'}2$ and $t = \tau$ there). Similarly, for $k = -1, 1$ we have
\[
\Omega_{-1} = \frac12 \left[ \updelta_2^{i}+ \updelta_2^{q' - i} (-1)^{pq' + p'}  \right] f_\tau(\tfrac{-\tau + \alpha'}2), \qquad
\Omega_1 = \frac12 \left[ \updelta_2^{i}+ \updelta_2^{q' - i} (-1)^{pq' + p'}  \right] f_\tau(\tfrac{\tau + \alpha'}2)
\]
which sum to
\[
\Omega_{1} + \Omega_{-1} = \frac12 \left[ \updelta_2^{i}+ \updelta_2^{q' - i} (-1)^{pq' + p'}  \right] 
\]
since $f_\tau(\tfrac{-\tau + \alpha'}2) + f_\tau(\tfrac{\tau + \alpha'}2) = 1$ (which follows from \eqref{ftxt} by taking $x = \tfrac{-\tau + \alpha'}2 \in (-\tfrac12,0)$ and $t = \tau$). We have therefore obtained
\begin{align*}
\phi_{ij} \inner{fV_1}{f}{D} 
&= \updelta_2^{j-1} (\Omega_0 + \Omega_{-2}) + \updelta_2^{q-j-1} (\Omega_{-1} + \Omega_1) 
\\
&=  \frac12 \updelta_2^{j-1} \left[ \updelta_2^{i}+ \updelta_2^{q' - i} (-1)^{p'}  \right] +
 \frac12 \updelta_2^{q-j-1} \left[ \updelta_2^{i}+ \updelta_2^{q' - i} (-1)^{pq' + p'}  \right]
\end{align*}
which establishes the equation for $\phi_{ij} \inner{fV_1}{f}{D}$ in the statement of Lemma \ref{phiV1V3}.

Now if $q'$ is odd, as in fact is needed for the computation in Section 6, this result simplifies to
\begin{align*}
\phi_{ij} \inner{fV_1}{f}{D}  &= \frac12  \updelta_2^{q-j-1}  \left[ \updelta_2^{i}+ \updelta_2^{i-1} (-1)^{p + p'}  \right]
+ \frac12 \updelta_2^{j-1}  \left[ \updelta_2^{i}+ \updelta_2^{i-1} (-1)^{p'}  \right]
\\
&= \frac12 \left[ \updelta_2^{q-j-1}   (-1)^{pi + p'i}  + \updelta_2^{j-1}  (-1)^{p'i}  \right]
\\
&= \frac12 (-1)^{p'i}  \left[ \updelta_2^{q-j-1}   (-1)^{pi}  + \updelta_2^{j-1} \right]
\qquad	(q'\text{ odd})
\end{align*}
as noted in the remark following Lemma \ref{phiV1V3}.

\subsection{Computation of $\phi_{ij} \inner{fV_3}{f}{D}$} 

Here we compute the unbounded traces $\phi_{ij}$ of $\eta^{-1}(V_3) = \Dinner{fV_3}{f}$. We have, using $V_3 = \pi_{-\updelta_3}$ and $\updelta_3 = (0, 0, 1 ;\  0, 0, 0)$, 
\[
fV_3(t,r,s) = \pi_{-\updelta_3}(f)(t,r,s)
= \pi_{(0, 0, -1 ;\  0, 0, 0)}(f)(t,r,s) 
= f(t,r,s-1) 
\]
so
\begin{align*}
\Dinner{fV_3}{f}(m\varepsilon_1+n\varepsilon_2) 
&= \frac1{\sqrt{qq'}} \rmsumop_{r=0}^{q-1} \rmsumop_{s=0}^{q'-1} 
e(-\tfrac{rn}q) \rmintop_{\Bbb R} f(t,r,s-1) \conj{f(t+\tfrac{m\alpha}q, r+mp, s+n)} e(-tn) dt
\\
&= \frac{c^2}{\sqrt{qq'}} \rmsumop_{r=0}^{q-1} \rmsumop_{s=0}^{q'-1} 
e(-\tfrac{rn}q) \rmintop_{\Bbb R} \delta_q^r \delta_{q'}^{s-1}  
\delta_q^{r+mp} \delta_{q'}^{s+n} \sqrt{f_0(t) f_0(t+\tfrac{m\alpha}q)} e(-tn) dt
\\
&= \frac1{\alpha} \delta_q^{m} \delta_{q'}^{n+1}
 \rmintop_{\Bbb R}  \sqrt{f_0(t) f_0(t+\tfrac{m\alpha}q)} e(-tn) dt
\intertext{in view of \eqref{fzero}, and using a change of variable $x=q't$, this becomes}
&= \frac1{\tau} \delta_q^{m} \delta_{q'}^{n+1}
 \rmintop_{\Bbb R}   \sqrt{f_\tau(x) f_\tau(x+\tfrac{m\tau}q)} \ e(-\tfrac{nx}{q'}) dx.
\end{align*}

This gives inner product (noting that $|G/D|=\tau=q'\alpha$)
\begin{align*}
\inner{fV_3}{f}{D} &=  \tau
\rmsumop_{m,n} \inner{fV_3}{f}{D}(m\varepsilon_1+n\varepsilon_2) \  U^n V^m
\\
&= \rmsumop_{m,n} \delta_q^{m} \delta_{q'}^{n+1}
 \rmintop_{\Bbb R}   \sqrt{f_\tau(x) f_\tau(x+\tfrac{m\tau}q)} e(-\tfrac{nx}{q'}) dx
\  U^n V^m
\intertext{setting $m=qk$ and $n=q'\ell - 1$}
&=  \rmsumop_{k,\ell} 
\rmintop_{\Bbb R}  \sqrt{f_\tau(x) f_\tau(x+k\tau)} e(-\ell x) e(\tfrac{x}{q'})  dx
\  U^{q'\ell - 1} V^{qk}
\intertext{and noting that the integrand vanishes for $|k|\ge2,$}
&=   \rmsumop_{k=-1}^1 \rmsumop_{\ell} 
\rmintop_{\Bbb R}  L_k(x) e(-\ell x) dx
\  U^{q'\ell - 1} V^{qk}
=   \rmsumop_{k=-1}^1 \rmsumop_{\ell} \ft L_k(\ell) \  U^{q'\ell - 1} V^{qk}
\end{align*}
where we put $L_k(x) = e(\tfrac{x}{q'}) \sqrt{f_\tau(x) f_\tau(x+k\tau)}$.  Applying $\phi_{ij}(U^mV^n)\ =\ e(-\tfrac{\theta}2 mn)\,\updelta_2^{m-i} \updelta_2^{n-j}$ one gets
\[
\phi_{ij} \inner{fV_3}{f}{D} 
= \rmsumop_{k=-1}^1  \updelta_2^{qk-j} \rmsumop_{\ell} \ft L_k(\ell) \cdot
e(-\tfrac{\theta}2 (q'\ell - 1)qk)\, \updelta_2^{q'\ell -1-i} 
= \rmsumop_{k=-1}^1  \updelta_2^{qk-j} e(\tfrac{\theta}2 qk) \ \Lambda_k
\]
where
\[
\Lambda_k = \rmsumop_{\ell} \ft L_k(\ell) \cdot e(-\tfrac{\theta}2 qq' k\ell)\, \updelta_2^{q'\ell -1-i}.
\]
Thus,
\[
\phi_{ij} \inner{fV_3}{f}{D} 
= \updelta_2^{j} \ \Lambda_0 
+ \updelta_2^{q-j} \left[ e(\tfrac{\theta}2 q) \ \Lambda_1
+ e(-\tfrac{\theta}2 q) \ \Lambda_{-1} \right].
\]
We have
\[
\Lambda_k = 
\updelta_2^{i -1}  \rmsumop_{n} \ft L_k(2n) \cdot e(-\theta qq' k n)
+ \updelta_2^{q' -1-i} e(-\tfrac{\theta}2 qq' k) 
\rmsumop_{n} \ft L_k(2n+1) \cdot e(-\theta qq' k n) 
\]
which by the Poisson formulas in Lemma \ref{poissonparity} becomes
\[
\Lambda_k  = \frac12 \updelta_2^{i -1} (A_k + B_k) 
+  \frac12  \updelta_2^{q' -1-i} (A_k - B_k)
\]
where
\[
A_k = \rmsumop_n L_k(\tfrac{-\theta qq' k}2+n), \qquad
B_k = \rmsumop_n L_k(\tfrac{-\theta qq' k}2+ \tfrac12+n).
\]
First, consider $k=0$:
\[
A_0 = \rmsumop_n L_0(n) = \rmsumop_n e(\tfrac{n}{q'}) f_\tau(n) = 1
\]
since $f_\tau(n) = 1$ for $n=0$ and 0 otherwise, and
\[
B_0 = \rmsumop_n L_0(\tfrac12+n) 
= \rmsumop_n e(\tfrac{\tfrac12+n}{q'}) f_\tau(\tfrac12+n)  = 0
\]
since $f_\tau(\tfrac12+n) = 0$ for all integers $n$. This gives
\[
\Lambda_0 = \frac12 (\updelta_2^{i -1} + \updelta_2^{q' -1-i}).
\]
To compute $\Lambda_1$ and $\Lambda_{-1},$ it will suffice to find
\[
A_1 = \rmsumop_n L_1(\tfrac{-\theta qq' }2+n), \qquad
B_1 = \rmsumop_n L_1(\tfrac{-\theta qq' }2+ \tfrac12+n).
\]
First, it is easy to see that $L_{-1}(x) = \conj{L_1(-x)}$ (since $f_\tau$ is even), and consequently $A_{-1} = \conj{A_1}$ and $B_{-1} = \conj{B_1},$ hence $\Lambda_{-1} = \conj{ \Lambda_1}$. For example, to see $B_{-1} = \conj{B_1},$ we have 
\[
B_{-1} = \rmsumop_n L_{-1}(\tfrac{\theta qq'}2+ \tfrac12+n)
= \rmsumop_n \conj{ L_1(\tfrac{-\theta qq'}2 - \tfrac12 - n) }
= \rmsumop_n \conj{ L_1(\tfrac{-\theta qq'}2 + \tfrac12 + n) } = \conj{B_1}
\]
using the substitution $n \to -1-n$. We now show that
\[
A_1 = \frac12 \updelta_2^{q'p} e(\tfrac{-\tau }{2q'}), \qquad B_1 = \frac12 \updelta_2^{q'p-1} e(\tfrac{-\tau}{2q'}). 
\]
For the first, write
\[
A_1 = \rmsumop_n L_1(\tfrac{-\tau}2 - \tfrac{q'p}2 + n).
\]
Letting $\epsilon = 0$ when $q'p$ is even, and $\epsilon = 1$ when $q'p$ is odd, the preceding sum becomes, after appropriate translation of the index $n,$
\[
A_1 = \rmsumop_n L_1(\tfrac{-\tau}2 + \tfrac{\epsilon}2 + n)
= \rmsumop_n e\left(\tfrac{\tfrac{-\tau}2 + \tfrac{\epsilon}2 + n}{q'}\right) 
\sqrt{f_\tau(\tfrac{-\tau}2 + \tfrac{\epsilon}2 + n) f_\tau(\tfrac{\tau}2 + \tfrac{\epsilon}2 + n)}.
\]
The function values under the square-root here is nonzero when both of its arguments lie in the open interval $(-\tfrac12, \tfrac12),$ so their sum $\epsilon+2n$ (which is an integer) is in the open interval $(-1, 1),$ so $\epsilon+2n = 0$. This means that if $q'p$ is odd ($\epsilon=1$) then no such $n$ exists and hence $A_1 = 0$. And if  $q'p$ is even, so $\epsilon=0,$ then $n = 0$:
\[
A_1 =  e(\tfrac{-\tau}{2q'}) \sqrt{f_\tau(\tfrac{-\tau}2) f_\tau(\tfrac{\tau}2)} 
= \frac12 e(\tfrac{-\tau}{2q'})
\]
since $f_\tau(\tfrac{\tau}2 ) = \tfrac12$ from \eqref{ftt2}. We may then write 
\[
A_1 =  \frac12 \updelta_2^{q'p} e(\tfrac{-\tau }{2q'})
\]
in either parity case.

Likewise (with $\epsilon$ as before), we have
\begin{align*}
B_1 
&= \rmsumop_n L_1( \tfrac{-\tau}2 - \tfrac{q'p}2 + \tfrac12+n )
= \rmsumop_n L_1( \tfrac{-\tau}2 + \tfrac{\epsilon}2 + \tfrac12+n )
\\
&= \rmsumop_n e\left(\tfrac{ \tfrac{-\tau}2 + \tfrac{\epsilon}2 + \tfrac12+n}{q'}\right) 
\sqrt{f_\tau( \tfrac{-\tau}2 + \tfrac{\epsilon}2 + \tfrac12+n) 
f_\tau( \tfrac{\tau}2 + \tfrac{\epsilon}2 + \tfrac12+n )}.
\end{align*}
By the same argument as before, the function values under the square-root are nonzero when their arguments are in $(-\tfrac12, \tfrac12),$ so their sum $1+\epsilon+2n = 0$. Therefore, $B_1 = 0$ when $q'p$ is even ($\epsilon=0$). And when $q'p$ is odd we must have $n = -1$ and $\epsilon=1$ which gives $B_1 = e(\tfrac{-\tau}{2q'})  \sqrt{f_\tau( \tfrac{-\tau}2) f_\tau( \tfrac{\tau}2) } = \frac12 e(\tfrac{-\tau}{2q'}),$
so that in either parity case we have
\[
B_1 = \frac12 \updelta_2^{q'p-1} e(\tfrac{-\tau}{2q'}).
\]
We thus get
\begin{align*}
\Lambda_1  &= \frac12 \updelta_2^{i -1} (A_1 + B_1) +  \frac12  \updelta_2^{q' -1-i} (A_1 - B_1)
\\
& = \frac14 \updelta_2^{i -1} \left[  \updelta_2^{q'p}  +  \updelta_2^{q'p-1} \right] 
e(\tfrac{-\tau }{2q'})
+  \frac14  \updelta_2^{q' -1-i} \left[  \updelta_2^{q'p} -  \updelta_2^{q'p-1}  \right] e(\tfrac{-\tau }{2q'})
\\
&  = \frac14 \updelta_2^{i -1} e(\tfrac{-\tau }{2q'})
+  \frac14  \updelta_2^{q' -1-i} (-1)^{q'p} e(\tfrac{-\tau }{2q'})
\end{align*}
and since $e(\tfrac{\theta}2 q) e(\tfrac{-\tau }{2q'}) = (-1)^p$ we have
\[
e(\tfrac{\theta}2 q) \Lambda_1
= \frac14 \updelta_2^{i -1} (-1)^p
+  \frac14  \updelta_2^{q' -1-i} (-1)^{q'p} (-1)^p
= \frac14 \updelta_2^{i -1} (-1)^p
+  \frac14  \updelta_2^{q' -1-i} (-1)^{pi}
\]
(where the last term holds in view of the $\updelta_2^{q' -1-i}$ factor)
which is real, and as seen above $\Lambda_{-1} = \conj{\Lambda_1},$ we get
\begin{align*}
\phi_{ij} \inner{fV_3}{f}{D} 
&= \updelta_2^{j} \ \Lambda_0 
+ \updelta_2^{q-j} \left[ e(\tfrac{\theta}2 q) \ \Lambda_1
+ e(-\tfrac{\theta}2 q) \ \Lambda_{-1} \right]
\\
&= \frac12 \updelta_2^{j}  (\updelta_2^{i -1} + \updelta_2^{q' -1-i}) 
+ \frac12 \updelta_2^{q-j} \cdot \left[ \updelta_2^{i -1} (-1)^p
+   \updelta_2^{q' -1-i} (-1)^{pi} \right]
\\
&= \frac12 \updelta_2^{j}  (\updelta_2^{i -1} + \updelta_2^{q' -1-i}) 
+ \frac12 \updelta_2^{q-j} \cdot \left[ \updelta_2^{i -1} 
+   \updelta_2^{q' -1-i} \right] (-1)^{pi} 
\\
&= \frac12 (\updelta_2^{i-1}  + \updelta_2^{q'-1-i}) (\updelta_2^{j} + (-1)^{pi} \updelta_2^{q-j})
\end{align*}
which is the expression in the statement of Lemma \ref{phiV1V3}, the proof of which is now complete. 

\textcolor{blue}{\Large \section{Appendix A: Unbounded Traces of $\mathcal E(t)$}}

In this section we show that the Connes-Chern character of the continuous field $\mathcal E(t)$ is
\[
\T(\mathcal E(t)) = (t; \tfrac12, \tfrac12, \tfrac12, \tfrac12)
\]
for $\tfrac12 \le t < 1$.

Recall that the continuous field $\mathcal E: [\frac12,1) \to \{A_t\}$ of Flip-invariant Powers-Rieffel projections is given by
\begin{equation}
\mathcal E(t) = G_t(U_t) V_t^{-1} + F_t(U_t) + V_t G_t(U_t) 
\end{equation}
where $F_t, G_t$ are smooth functions, as in Section 2.2. Fix $j,k$ and write $\phi := \phi_{jk}^t,$ which is defined on $A_t^\infty$ by
\begin{equation}
\phi_{jk}^t(U_t^mV_t^n)\ =\ e(-\tfrac{t}2 mn)\,\updelta_2^{m-j} \updelta_2^{n-k}.
\end{equation}
By the $\Phi$-trace property of $\phi,$ we have $\phi(G_t(U_t) V_t^{-1}) = \phi( \Phi(V_t^{-1}) G_t(U_t)) = \phi(V_t G_t(U_t))$ so that
\[
\phi(\mathcal E(t)) = \phi(F_t(U_t)) + 2 \phi(G_t(U_t)V_t^{-1} ).
\]
Expressing $F_t$ in terms of its Fourier transform
\[
F_t(U_t) = \rmsumop_{n\in \mathbb Z} \ft F_t(n) U_t^n 
\]
where $\ft F_t(n) = \ft f_t(n)$ (see proof of Lemma \ref{poisson}), we obtain
\[
\phi(F_t(U_t)) = \rmsumop_{n\in \mathbb Z} \ft F_t(n) \phi(U_t^n) 
= \rmsumop_{n\in \mathbb Z} \ft F_t(n) \delta_2^{n-j}  \delta_2^{0-k}
= \delta_2^{k} \rmsumop_{n\in \mathbb Z} \ft F_t(n) \delta_2^{n-j}.  
\]
Expanding this series into its even and odd indices, one has
\[
\rmsumop_{n\in \mathbb Z} \ft F_t(n) \delta_2^{n-j}  
\ =\ \delta_2^{j}\rmsumop_{n\in \mathbb Z} \ft F_t(2n)   +
\delta_2^{j-1} \rmsumop_{n\in \mathbb Z} \ft F_t(2n+1) 
\ =\ \delta_2^{j}\rmsumop_{n\in \mathbb Z} \ft f_t(2n)   +
\delta_2^{j-1} \rmsumop_{n\in \mathbb Z} \ft f_t(2n+1) 
\]
which, in view of the Poisson Lemma \ref{poissonparity}, become
\[
= \delta_2^{j} \left(
\frac12 \rmsumop_{n=-\infty}^\infty f_t(n) + f_t(\tfrac12+n)
\right)
+ \delta_2^{j-1} \left(
\frac12  \rmsumop_{n=-\infty}^\infty f_t(n) - f_t(\tfrac12+n).
\right)
\]
The sums here are (see Figure \ref{figfg})
\[
\rmsumop_n f_t(n) = f_t(0) = 1, \qquad \rmsumop_n f_t(\tfrac12+n) = 0
\]
so that
\[
\phi(F_t(U_t)) = \delta_2^{k} \frac12 (\delta_2^{j} + \delta_2^{j-1}) = \frac12\delta_2^{k}.
\]
We similarly compute $\phi(G_t(U_t)V_t^{-1} )$ using the Fourier series for $G_t$
\[
G_t(U_t) = \rmsumop_{n\in \mathbb Z} \ft G_t(n) U_t^n. 
\]
We have 
\begin{align*}
\phi(G_t(U_t)V_t^{-1}) &= \rmsumop_{n\in \mathbb Z} \ft G_t(n) \phi(U_t^n V_t^{-1})
= \delta_2^{-1-k} \rmsumop_{n\in \mathbb Z} \ft G_t(n) e(\tfrac{t}2n) \delta_2^{n-j} 
\\
&= \delta_2^{k-1} \delta_2^{j} 
\rmsumop_{n\in \mathbb Z} \ft G_t(2n) e(tn) 
+
\delta_2^{k-1} \delta_2^{j-1} e(\tfrac{t}2)\rmsumop_{n\in \mathbb Z} \ft G_t(2n+1) e(tn) 
\end{align*}
which again by Lemma  \ref{poissonparity} (with $H=g_t$ and using $\ft G_t = \ft g_t$) is 
\[
= \frac12 \delta_2^{k-1} \delta_2^{j} \left( \rmsumop_n g_t(\tfrac{t}2+n) + g_t(\tfrac{t}2+ \tfrac12+n)
\right)
+ \frac12 \delta_2^{k-1} \delta_2^{j-1} \left( \rmsumop_n g_t(\tfrac{t}2+n) - g_t(\tfrac{t}2+ \tfrac12+n) \right).
\]
The individual sums here are (note $f_t(\tfrac{t}2) = \tfrac12 = g_t(\tfrac{t}2)$ from Section 2.2) 
\[
\rmsumop_n g_t(\tfrac{t}2+n) = g_t(\tfrac{t}2) = \frac12, \qquad
\rmsumop_n  g_t(\tfrac{t}2+ \tfrac12+n) = 0  
\]
so
\[
\phi(G_t(U_t)V_t^{-1}) = \frac14 \delta_2^{k-1}(\delta_2^{j} + \delta_2^{j-1}) 
= \frac14 \delta_2^{k-1}.
\]
Therefore we obtain the desired unbounded traces
\[
\phi_{jk}^t (\mathcal E(t)) = \frac12\delta_2^{k} + 2 \frac14 \delta_2^{k-1} = \frac12
\]
which gives the Connes-Chern character of $\mathcal E(t)$ as
\[
\T(\mathcal E(t)) = (t; \tfrac12, \tfrac12, \tfrac12, \tfrac12)
\]
for $\tfrac12 \le t < 1$ (the parameter range over which the field $\mathcal E$ is defined).
\medskip

We note that the unbounded traces of the projection depend on the following values of the underlying function: $f_t(0) = 1,\ f_t(\tfrac12) = 0, \ f_t(\tfrac{t}2)=\tfrac12, \ f_t(\tfrac{t}2+\tfrac12) = 0$ (where the last of these holds since $f_t$ is compactly supported on $[-\tfrac12,\tfrac12],$ where it is an even function). So any homotopic deformation of $f_t$ which preserves these boundary conditions (and of course maintaining the equations between $F,G$ that make $\mathcal E$ is a projection) would still give us the same unbounded traces.


\textcolor{blue}{\Large \section{Appendix B: Flip-Traces on $M_{q'}(C(\mathbb T))$}}

In this section we show that the two functionals
\begin{align}\label{Phiprimetraces}
\psi_{1} ( V_3^{n} V_1^m ) &= e(\tfrac{p'mn}{2q'})  \updelta_2^{m} 
\\
\psi_{2} ( V_3^{n} V_1^m ) &= e(\tfrac{p'mn}{2q'}) (-1)^{p'n}\ \updelta_2^{m-q'}	\notag
\end{align}
form a basis for all $\Phi'$-traces on the circle algebra $B = C^*(V_1,V_3) \cong M_{q'}\otimes C(\mathbb T)$ generated by unitaries $V_3, V_1$ satisfying
\[
V_3 V_1 = e(\tfrac{p'}{q'}) V_1 V_3, \qquad V_3^{q'} = I
\]
as in \eqref{TheVs}, with $V_1$ of full spectrum, where $\Phi'$ is the Flip automorphism of $B$ defined by
\[
\Phi'(V_3) = V_3^{-1}, \qquad \Phi'(V_1) = V_1^{-1}.
\]

\medskip

\begin{proof} We begin with the canonical epimorphism of the rational rotation algebra onto $B,$
\[
\pi: A_{p'/q'} \to C^*(V_1,V_3), \qquad \pi(U') = V_1,\qquad \pi(V') = V_3
\]
where $U' = U_{p'/q'}, \ V' = V_{p'/q'}$ satisfy $V'U' = e(\tfrac{p'}{q'}) U'V'$. This surjection intertwines the two Flips
\[
\pi \Phi = \Phi'\pi 
\]
which is easy to verify, where $\Phi(U')={U'}^{-1}, \Phi(V') = {V'}^{-1}$ is the Flip on $A_{p'/q'}$.

\medskip

Fix a $\Phi'$-trace $\psi$ on $B$. Since $\psi \pi$ is a $\Phi$-trace on $A_{p'/q'},$ it is a linear combination of the four basic unbounded $\Phi$-traces defined on the basic unitaries ${U'\,}^m{V'\,}^n$ by 
\begin{equation}
\phi_{ij}({U'\,}^m{V'\,}^n)\ =\ e(-\tfrac{p'mn}{2q'})\,\updelta_2^{m-i} \updelta_2^{n-j}
\end{equation}
($ij = 00, 01,10,11$). Thus
\[
\psi \pi = a \phi_{00} + b\phi_{01} + c\phi_{10} + d\phi_{11}
\]
for some constants $a,b,c,d$. Evaluating this gives on the basic unitaries,
\[
\psi( V_1^{m} V_3^n ) = \psi \pi ({U'}^m{V'}^n) 
= a \phi_{00}({U'}^m{V'}^n) + b\phi_{01}({U'}^m{V'}^n) + c\phi_{10}({U'}^m{V'}^n) 
+ d\phi_{11}({U'}^m{V'}^n) 
\]
\[
= e(-\tfrac{p'mn}{2q'})
\Big( a\updelta_2^{m} \updelta_2^{n} + b\updelta_2^{m} \updelta_2^{n-1}
+ c\updelta_2^{m-1} \updelta_2^{n} + d\updelta_2^{m-1} \updelta_2^{n-1}
\Big)
\]
from $e(\tfrac{p'mn}{q'}) V_1^{m} V_3^n = V_3^n V_1^{m},$ this gives
\begin{equation}\label{psiabcd}
\psi( V_3^{n} V_1^m ) = e(\tfrac{p'mn}{2q'}) 
\Big( a\updelta_2^{m} \updelta_2^{n} + b\updelta_2^{m} \updelta_2^{n-1}
+ c\updelta_2^{m-1} \updelta_2^{n} + d\updelta_2^{m-1} \updelta_2^{n-1}
\Big).
\end{equation}
Since this expression should be invariant under the translation $n \to n+q'$ (as $V_3$ has order $q'$), we get
\begin{align*}
a\updelta_2^{m} \updelta_2^{n} &+ b\updelta_2^{m} \updelta_2^{n-1}
+ c\updelta_2^{m-1} \updelta_2^{n} + d\updelta_2^{m-1} \updelta_2^{n-1}
\\
&=
(-1)^{p'm} a\updelta_2^{m} \updelta_2^{n+q'} + (-1)^{p'm} b\updelta_2^{m} \updelta_2^{n+q'-1}
+ (-1)^{p'm} c\updelta_2^{m-1} \updelta_2^{n+q'} + (-1)^{p'm} d\updelta_2^{m-1} \updelta_2^{n+q'-1}
\end{align*}
or
\begin{align*}
a\updelta_2^{m} \updelta_2^{n} + b\updelta_2^{m} \updelta_2^{n-1}
&+ c\updelta_2^{m-1} \updelta_2^{n} + d\updelta_2^{m-1} \updelta_2^{n-1}
\\
& =
a\updelta_2^{m} \updelta_2^{n+q'} + b\updelta_2^{m} \updelta_2^{n+q'-1}
+ (-1)^{p'} c\updelta_2^{m-1} \updelta_2^{n+q'} + (-1)^{p'} d\updelta_2^{m-1} \updelta_2^{n+q'-1}
\end{align*}
which must be satisfied for all four parities of $m,n$. Setting $m=n=0,$ it gives
\[
a =  a\updelta_2^{q'} + b\updelta_2^{q'-1} 
\]
and for $m=0, n=1$:
\[
b =  a \updelta_2^{q'-1} + b \updelta_2^{q'}
\]
for $m=1, n=0$:
\[
c = (-1)^{p'} c \updelta_2^{q'} + (-1)^{p'} d \updelta_2^{q'-1}
\] 
and for $m=n=1$:
\[
d =  (-1)^{p'} c \updelta_2^{q'-1} + (-1)^{p'} d \updelta_2^{q'}
\]

We consider separately two parity cases for $q'$. 

If $q'$ is even (so $p'$ is odd), these become $a =  a, \ b =  b, \ c = - c, \ d = - d,$
so $c=d=0,$ and equation \eqref{psiabcd} becomes
\[
\ \ \qquad \psi( V_3^{n} V_1^m ) = e(\tfrac{p'mn}{2q'}) 
\Big( a \updelta_2^{n} + b \updelta_2^{n-1} \Big) \updelta_2^{m},\qquad (q' \ \text{even})
\]
where $a,b$ are arbitrary scalars. Taking $a=b=1$ and $a=1,b=-1$, gives us the two basic unbounded traces
\[
\ \ \qquad 
\psi_{1} ( V_3^{n} V_1^m ) = e(\tfrac{p'mn}{2q'})  \updelta_2^{m}, \qquad 
\psi_{2} ( V_3^{n} V_1^m ) = e(\tfrac{p'mn}{2q'}) (-1)^{n} \updelta_2^{m} 
\qquad (q' \ \text{even}).
\]
These are easily verified to be well-defined under $n\to n+q',$ where $q'$ is even here, and that both are $\Phi'$-traces. 

\bigskip

If $q'$ is odd, we get $a = b, \  d =  (-1)^{p'} c,$ from which we can take $a$ and $c$ to be independent parameters, and equation \eqref{psiabcd} in this case becomes
\begin{align*}
\psi( V_3^{n} V_1^m ) &= e(\tfrac{p'mn}{2q'}) 
\Big( a\updelta_2^{m} \updelta_2^{n} + a\updelta_2^{m} \updelta_2^{n-1}
+ c\updelta_2^{m-1} \updelta_2^{n} + c (-1)^{p'} \updelta_2^{m-1} \updelta_2^{n-1}
\Big)
\\
& = a e(\tfrac{p'mn}{2q'}) ( \updelta_2^{n} + \updelta_2^{n-1})  \updelta_2^{m}
+ c e(\tfrac{p'mn}{2q'}) ( \updelta_2^{n} +  (-1)^{p'} \updelta_2^{n-1})  \updelta_2^{m-1} 
\\
& = a e(\tfrac{p'mn}{2q'})  \updelta_2^{m}
+ c e(\tfrac{p'mn}{2q'})  (-1)^{p'n}  \updelta_2^{m-1} 
\end{align*}
giving us the two basic unbounded traces in the odd $q'$ case
\[
\psi_1( V_3^{n} V_1^m ) = e(\tfrac{p'mn}{2q'})  \updelta_2^{m}, 	\qquad
\psi_2( V_3^{n} V_1^m ) = e(\tfrac{p'mn}{2q'}) (-1)^{p'n} \updelta_2^{m-1}.
\]
Combining the two parity cases for $q'$, we can write the two basic $\Phi'$-traces as in \eqref{Phiprimetraces} above. (Note that $\psi_2$ here agrees with the odd $q'$ case, and when $q'$ is even, $p'$ has to be odd so $(-1)^{p'n} = (-1)^{n}$ as in $\psi_2$ in the even case.) 
\end{proof}

Notice that by contrast with the unbounded $\Phi$-trace functionals $\phi_{jk}$ for the smooth rotation algebra, the $\Phi'$-traces $\psi_1,\psi_2$ are continuous linear functionals on the circle algebra $M_{q'}(C(\mathbb T))$. Thus, the ``unbounded traces" here turn out to be bounded.

\medskip

\subsection*{Acknowledgement}
This paper and \ccite{WaltersModular} were written at about the time the author retires. He is therefore most grateful to his home institution of 26 years, the University of Northern British Columbia, for many years of research and so much other support. The author expresses his nontrivial gratitude to the many referees who made helpful review reports over the years (including critical ones). Thank you.

\bigskip


\begin{thebibliography}{10}
 
\bibitem{BKR}
B. Blackadar, A. Kumjian, and M. R\o rdam, \emph{Approximately Central Matrix Units and the Structure of Noncommutative Tori}, K-theory {\bf6} (1992), 267--284.

\bibitem{FB}
F. P. Boca, \emph{Projections in rotation algebras and theta functions}, Comm. Math. Phys. {\bf202} (1999), 325--357.

\bibitem{BEEKa}
O.~Bratteli, G.~A.~Elliott, D.~E.~Evans, A.~Kishimoto, \emph{Non-commutative spheres I}, Internat. J. Math. {\bf2} (1990), no. 2, 139--166.

\bibitem{BK}
O. Bratteli, A. Kishimoto, \emph{Non-commutative spheres III: Irrational rotation}, Comm. Math. Phys. {\bf147} (1992), 605--624.

\bibitem{BW}
J. Buck and S. Walters, \emph{Connes-Chern characters of hexic and cubic modules}, J. Operator Theory {\bf57} (2007), 35--65.

\bibitem{ELPW}
S. Echterhoff, W. L\"uck, N. C. Phillips, S. Walters,
\emph{The structure of crossed products of irrational rotation algebras by 
finite subgroups of ${\mathrm SL}_2(\mathbb Z)$}, J. Reine Angew. Math. (Crelle's Journal), to appear, 43 pages.

\bibitem{GE1984}
G. Elliott, \emph{On the K-theory of the C*-algebra generated by a projective representation of a torsion-free discrete abelian group}, in Operator algebras and group representations (Pitman,
London, 1984),151--184.

\bibitem{GE}
G. Elliott, \emph{On the classification of C*-algebras of real rank zero}, 
J. Reine Angew. Math. (Crelle's Journal) {\bf443} (1993), 179--219.

\bibitem{EE}
G. Elliott and D. Evans, \emph{The structure of the irrational rotation C*-algebra},
Ann. Math. {\bf138} (1993), 477--501.

\bibitem{EL}
G. A. Elliott and Q. Lin, \emph{Cut-down method in the inductive limit decomposition of non-commutative tori}, J. London Math. Soc. (2) {\bf54} (1996), 121--134.


\bibitem{AK}
A. Kishimoto, \emph{Central sequence algebras of a purely infinite simple 
C*-algebra}, Canad. J. Math. {\bf 56}, No. 6 (2004), 1237--1258.


\bibitem{MRb}
M. Rieffel, \emph{Projective modules over higher-dimensional non-commutative tori}, Canad. J. Math {\bf40} (1988), 257--338.

\bibitem{Rordam}
M. Rordam, E. Stormer, \emph{Classification of Nuclear C*-Algebras. Entropy in Operator Algebras}, Springer-Verlag, Berlin, 2002.

\bibitem{SWa}
S. G. Walters, \emph{Projective modules over the non-commutative sphere}, J. London Math. Soc. {\bf51}, No. 2 (1995), 589--602.

\bibitem{SW-CMP}
S. G. Walters, \emph{Inductive limit automorphisms of the irrational rotation algebra}, Comm. Math. Phys. {\bf 171} (1995), 365--381.

\bibitem{SWChern}
S.~Walters, \emph{Chern characters of Fourier modules}, Canad. J. Math. {\bf 52} (2000), No. 3, 633--672.

\bibitem{SWcjm}
S. Walters, \emph{K-theory of non commutative spheres arising from the Fourier 
automorphism}, Canad. J. Math. {\bf53}, No. 3 (2001), 631--672.


\bibitem{SWcrelles}
S. Walters, \emph{The AF structure of non commutative toroidal 
$\mathbb Z/4\mathbb Z$ orbifolds}, 
J. Reine Angew. Math. (Crelle's Journal) {\bf568} (2004), 139--196. 

\bibitem{SWrokhlin}
S. Walters, \emph{The exact tracial Rokhlin property}, Houston J. Math. {\bf41} (2015), No. 1, 265--272.

\bibitem{SamHouston2018}
S. Walters, \emph{Semiflat orbifold projections}, Houston J. Math {\bf44} (2018), No. 2, 645--663. arXiv:1711.01016

\bibitem{WaltersModular} 
S. Walters, \emph{Modular Images Of Approximately Central Projections}, preprint (2020), 12 pages. arxiv.org/abs/2006.07728

\bibitem{WaltersKAC} 
S. Walters, \emph{K-theory of approximately central projections: Fourier case}, in progress, approx 36 pages. 

\end{thebibliography}
\end{document}